% ------------------------------------------------------------------------
% AMS-LaTeX Paper ********************************************************
% ------------------------------------------------------------------------
% This is a journal top-matter template file for use with AMS-LaTeX.
%%%%%%%%%%%%%%%%%%%%%%%%%%%%%%%%%%%%%%%%%%%%%%%%%%%%%%%%%%%%%%%%%%%%%%%%%%

\documentclass[10pt]{amsart}
\usepackage{amssymb}
%\usepackage[inactive]{srcltx} % SRC Specials for DVI Searching

% Over-full v-boxes on even pages are due to the \v{c} in author's name
%\vfuzz2pt % Don't report over-full v-boxes if over-edge is small

% THEOREM Environments ---------------------------------------------------
 \newtheorem{thm}{Theorem}[section]
 \newtheorem{cor}[thm]{Corollary}
 
 \newtheorem{lem}[thm]{Lemma}
 
 \theoremstyle{definition}
 \newtheorem{defn}[thm]{Definition}
 \newtheorem{nta}[thm]{Notation}
 \theoremstyle{remark}

\pagestyle{plain}

\textwidth 15.5cm

\textheight 20cm

\topmargin 2cm

\hoffset-1.5cm

\baselineskip 16 true pt

% enumeration labels

%

\newcommand{\ord}{{\mathrm{o}}}
\newcommand{\pg}{{\mathcal{P}}}

\begin{document}

\title
{Edge-maximality of power graphs of finite cyclic groups}

\author{Brian Curtin}
\author{G.~R.~Pourgholi}

\address{{\bf Brian Curtin}: Department of Mathematics and Statistics, University of South Florida, Tampa FL, 33620}
\email{bcurtin@usf.edu}

\address{{\bf G. R. Pourgholi}: School of Mathematics, Statistics and Computer Science,
University of Tehran, Tehran 14155-6455, I.~R.~Iran}

\email{pourgholi@ut.ac.ir; pourgholi@alumni.ut.ac.ir;
gh.reza.pourgholi@gmail.com}

\begin{abstract}
We show that among all finite groups of any given order, the
cyclic group of that order has the maximum number of edges in its
power graph.

\vskip 3mm

\noindent{Keywords:}  Cyclic group, $p$-group,
Greatest prime divisor, Least prime divisor.
\end{abstract}

\maketitle

\begin{center}
{\bf
The final publication is available at link.springer.com\\
DOI: 10.1007/s10801-013-0490-5\\
Several minor corrections to the published version appear in the ArXiv version}
\end{center}
\bigskip

\section{Introduction}

In this paper, we resolve  in the affirmative a conjecture of
Mirzargar et al. \cite[Conjecture 2]{m}  concerning the number of
edges  in the power graph of a finite group.   Motivated by the
work of Kelarev and Quinn \cite{KQ1,KQ2,k,KQS}, Chakrabarty,
Ghosh, and Sen \cite{ch} introduced undirected power graphs to
study semigroups and groups. Other relevant work includes
\cite{c1,c2}.  The reader is encouraged to see \cite{AbKeCh} which
surveys the literature to date.

\begin{defn}
Let $G$ be a finite group.  Let $\langle g \rangle$
denote the cyclic subgroup of $G$ generated by $g\in G$.
\begin{enumerate}
\item The {\em directed power graph} $\overrightarrow{\pg}(G)$ of  $G$ is the directed graph with vertex set $G$ and  directed edge set
  $\overrightarrow{E}(G) = \{(g,h)\,|\, g, h\in G,\,
                                                       h\in\langle g \rangle-\{g\}\}$.
That is, there is an edge from one group element to a second whenever the second  is a positive power of the first and distinct from the first.

\item
\label{defn:first}
The {\it undirected  power graph} (or {\em power graph} ) ${\pg}(G)$ of $G$ is the undirected graph with vertex set $G$ and edge set
$E(G)=\{ \{g,h\}   \, | \, (g,h)\in \overrightarrow{E}(G) \hbox{ or }
                                 (h,g)\in\overrightarrow{E}(G) \}$.
That is,  two distinct group elements  are adjacent whenever one
of them is a positive power of the other.
\end{enumerate}
\end{defn}

We recall the following property of directed power graphs of cyclic groups.

 \begin{thm} \cite[Main Theorem]{a}
\label{cor:maximume} Among all finite groups of a given order, the
cyclic group of that order has the maximum number of edges in its
directed power graph.
\end{thm}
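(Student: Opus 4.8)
The plan is to convert the edge count into an arithmetic quantity that cyclic groups control extremely well. Each $g\in G$ has out-neighbours $\langle g\rangle-\{g\}$ in $\overrightarrow{\pg}(G)$, so its out-degree is $\ord(g)-1$ and hence
\[
|\overrightarrow{E}(G)|=\sum_{g\in G}\bigl(\ord(g)-1\bigr)=\psi(G)-|G|,\qquad \psi(G):=\sum_{g\in G}\ord(g).
\]
Since $|G|=n$ is fixed, the statement is equivalent to $\psi(G)\le\psi(C_n)$ for every group $G$ of order $n$, with the cyclic group attaining equality. The one external input I would use is Frobenius's theorem: for each $d\mid n$ the number $N_d(G):=|\{g\in G: g^d=1\}|$ is a multiple of $d$, so $N_d(G)\ge d=N_d(C_n)$ (as $N_d(C_n)=\gcd(d,n)=d$).

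Next I would group elements by the cyclic subgroup they generate. A cyclic subgroup of order $d$ has $\phi(d)$ generators, each of order $d$, so writing $c_d(G)$ for the number of cyclic subgroups of order $d$ gives $\psi(G)=\sum_{d\mid n}d\,\phi(d)\,c_d(G)$ and $N_d(G)=\sum_{e\mid d}\phi(e)\,c_e(G)$; for $C_n$ one has $c_d(C_n)=1$ for all $d\mid n$, so $\psi(C_n)=\sum_{d\mid n}d\,\phi(d)$. When $G$ is a $p$-group of order $p^a$ the divisor poset is a chain, and summing $\psi(G)=\sum_{c=0}^{a}p^{c}\bigl(N_{p^{c}}(G)-N_{p^{c-1}}(G)\bigr)$ by parts (with $N_{p^{-1}}:=0$) yields the closed form
\[
\psi(G)=p^{2a}-(p-1)\sum_{c=0}^{a-1}p^{c}\,N_{p^{c}}(G).
\]
Every coefficient $(p-1)p^{c}$ is positive and $N_{p^{c}}(G)\ge p^{c}$ with equality precisely for $C_{p^a}$, so this settles the prime-power case and identifies the equality group. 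This is the clean core of the argument.

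For general $n$ the chain argument breaks down, and this is where I expect the real work to be. After Möbius inversion of the relation between $\psi$ and the $N_e$ one gets
\[
\psi(G)-\psi(C_n)=\sum_{e\mid n}e\Bigl(\textstyle\prod_{p\mid (n/e)}(1-p)\Bigr)\bigl(N_e(G)-e\bigr),
\]
and the products $\prod_{p\mid(n/e)}(1-p)$ alternate in sign, so the termwise inequalities $N_e(G)\ge e$ cannot be fed in directly; in fact the linear constraints furnished by Frobenius alone do not force this difference to be $\le 0$, so genuinely structural input is needed. My plan for the descent to the prime-power case is induction on the number of prime divisors of $n$: the target $\psi(C_n)=\prod_{q}\psi(C_{q^{a_q}})$ factors multiplicatively over the primes, while for a general $G$ one must control $\psi$ by combining the per-prime data $N_{p^c}(G)$ with the monotonicity $N_d(G)\le N_{d'}(G)$ for $d\mid d'$ and with Sylow/Hall information for the greatest (or least) prime divisor $p$, peeling off a Sylow or normal Hall subgroup at each step. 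The main obstacle is exactly this reduction — showing that a noncyclic $G$ can be compared, without increasing $\psi$, to a group whose Sylow structure is "more diagonal" — since it is the point at which the elementary Frobenius inequalities must be supplemented by honest group-theoretic structure.
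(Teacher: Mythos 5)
Your opening reduction is exactly the paper's proof of this statement: by (\ref{eq:dEGsize}), $|\overrightarrow{E}(G)|=\sum_{g\in G}(\ord(g)-1)=\psi(G)-n$ where $\psi(G)=\sum_{g\in G}\ord(g)$, so the theorem is equivalent to $\psi(G)\le\psi(\mathbb{Z}_n)$ with equality only for the cyclic group. But the paper does not prove that inequality: it is precisely the Main Theorem of Amiri--Jafarian Amiri--Isaacs, quoted here as Lemma \ref{lem:PGcyclicprimepower2}, and Theorem \ref{cor:maximume} itself carries the citation \cite[Main Theorem]{a}. The entire content beyond the one-line out-degree count is outsourced to that reference.

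Measured against that content, your proposal has a genuine gap. The prime-power case is correct and self-contained: the summation-by-parts identity $\psi(G)=p^{2a}-(p-1)\sum_{c=0}^{a-1}p^{c}N_{p^{c}}(G)$ together with Frobenius's theorem (so $N_{p^{c}}(G)$ is a positive multiple of $p^{c}$, hence $\ge p^{c}$) gives $\psi(G)\le\psi(\mathbb{Z}_{p^{a}})$, and equality forces $N_{p^{a-1}}(G)=p^{a-1}$ and hence an element of order $p^{a}$. For general $n$, however, you correctly note that the M\"obius-inverted expansion $\psi(G)-\psi(\mathbb{Z}_n)=\sum_{e\mid n}e\bigl(\prod_{p\mid(n/e)}(1-p)\bigr)\bigl(N_e(G)-e\bigr)$ has coefficients of both signs, so the Frobenius inequalities alone cannot close the argument --- and then you stop at a plan. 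The proposed induction on the number of prime divisors is never executed: you do not produce the structural step (an element of order exceeding $n/p$, a normal cyclic Sylow subgroup, a complement to induct on) that makes the descent work. That step is the heart of \cite{a}, and it is closely parallel to what Lemmas \ref{lem:largeordntce}--\ref{lem:cenotphi>n/q} of this paper do in the undirected setting. As written, your argument proves the theorem only for prime-power orders; the general case is missing unless you explicitly cite \cite{a} for it, which is what the paper does.
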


Our main theorem, which resolves \cite[Conjecture 2]{m},  is the undirected analog of Theorem \ref{cor:maximume}.

\begin{thm}
\label{thm:main} Among all finite groups of a given order, the
cyclic group of that order has the maximum number of edges in its
power graph.
\end{thm}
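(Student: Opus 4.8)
The plan is to reduce the statement to a single inequality about the \emph{order distribution} of $G$ and then attack that inequality. Write $n_d(G)$ for the number of elements of order $d$ and $c_d(G)=n_d(G)/\phi(d)$ for the number of cyclic subgroups of order $d$. I would first count the edges of $\pg(G)$ by assigning each edge to the smallest cyclic subgroup containing it: every edge lies inside a cyclic subgroup (one endpoint is a power of the other, so the larger-order endpoint $h$, with $\ord(h)=d$, generates that subgroup $C$), and since a generator of $C$ is adjacent in $\pg(G)$ to every element of $C$, the number of edges assigned to $C$ equals $\binom{d}{2}-\binom{d-\phi(d)}{2}$. Summing over all cyclic subgroups yields the closed formula
\[
|E(G)|=\sum_{d\mid n}c_d(G)\,\psi(d),\qquad \psi(d)=\binom{d}{2}-\binom{d-\phi(d)}{2}=\tfrac12\,\phi(d)\bigl(2d-\phi(d)-1\bigr).
\]
Since every element order divides $n=|G|$, the sum runs over $d\mid n$, and for the cyclic group $c_d(\mathbb{Z}_n)=1$ for all $d\mid n$. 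Thus the theorem is equivalent to $\sum_{d\mid n}\bigl(c_d(G)-1\bigr)\psi(d)\le 0$.

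Next I would bring in the only robust constraint available for all groups simultaneously. By Frobenius's theorem, for each $m\mid n$ the count $N_m(G)=|\{g\in G:g^m=1\}|$ is a positive multiple of $m$, hence $N_m(G)\ge m=N_m(\mathbb{Z}_n)$, so the cyclic group is the pointwise minimiser of all these cumulative counts. Writing $\gamma_d=c_d(G)-1$ and using $N_m(G)=\sum_{d\mid m}\phi(d)c_d(G)$ together with $\sum_{d\mid m}\phi(d)=m$, this says the partial sums $P_m:=\sum_{d\mid m}\phi(d)\gamma_d$ satisfy $P_m\ge 0$ for all $m\mid n$, with $P_1=P_n=0$. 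Since $\psi(d)=\rho(d)\phi(d)$ with $\rho(d)=d-\tfrac12(\phi(d)+1)$, Möbius inversion over the divisor lattice rewrites the target quantity as $\sum_{e\mid n}P_e\,C_e$, where $C_e=\sum_{e\mid d\mid n}\mu(d/e)\,\rho(d)$. Because $P_e\ge 0$, it would suffice to prove $C_e\le 0$ for every $e$ with $1<e<n$.

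For prime-power order this last step is immediate and gives a clean base case: when $n=p^a$ the divisor lattice is a chain, so $C_{p^i}=\rho(p^i)-\rho(p^{i+1})$, and $\rho(p^i)=\tfrac12(p^i+p^{i-1}-1)$ is strictly increasing, whence $C_{p^i}<0$ and the inequality follows at once from Frobenius. I would then proceed by induction on the number of distinct prime divisors of $n$: factoring $d=p^ie$ with $p$ the greatest prime divisor of $n$ and $e\mid n/p^a$, I would sum first in the chain direction (the powers of $p$), using the prime-power estimate to collapse that variable, thereby reducing to the analogous inequality for the divisors of $n/p^a$, to which the inductive hypothesis applies.

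The main obstacle is that for composite $n$ the coefficients $C_e$ are \emph{not} all nonpositive: already for $n=12$ one computes $C_2>0$, so the termwise bound fails and Frobenius's inequality $N_m(G)\ge m$ is genuinely insufficient by itself. The positive low-order coefficients must be compensated by the (negative) high-order ones, and this is a real group-theoretic phenomenon rather than a formal one: many involutions, say, force many elements whose orders are divisible by higher prime powers, and it is exactly this linkage that the bare count $N_m(G)\ge m$ cannot see. The heart of the argument is therefore extracting the additional inequalities among the $c_{p^ie}(G)$ that let the large-order deficits dominate the small-order surpluses, and making the two-directional summation over the lattice rigorous; this is where Sylow theory and the distinguished role of the greatest and least prime divisors of $n$ must enter, and it is the step I expect to require the most care.
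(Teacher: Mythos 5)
Your edge count is correct: it is exactly the paper's formula $|E(G)|=\tfrac12\sum_{g\in G}\bigl(2\ord(g)-\phi(\ord(g))-1\bigr)$ regrouped by the cyclic subgroup each element generates, and the reduction to $\sum_{d\mid n}\bigl(c_d(G)-1\bigr)\psi(d)\le 0$ together with the Frobenius constraints $P_m\ge 0$ is a legitimate and genuinely different framework from the one in the paper. The prime-power case also goes through as you describe. But there is a genuine gap exactly where you flag it: after the M\"obius summation by parts the coefficients $C_e$ are not all nonpositive (your $n=12$, $C_2>0$ computation is correct), so the only information about $G$ you have imported --- $N_m(G)\ge m$ --- provably cannot yield the conclusion, and the proposed ``induction on the number of distinct prime divisors, collapsing the $p$-chain first'' is a restatement of the difficulty rather than an argument. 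No mechanism is supplied for how the surpluses $P_eC_e>0$ at small divisors are dominated by the deficits at large ones; as you yourself note, that is the heart of the matter, and it is missing. As written the argument does not prove the theorem.

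For comparison, the paper closes this gap by entirely different means: it runs a minimal-counterexample argument and uses sharpened lower bounds on $\sum_{d\mid n}\phi(d)d$ (improving a lemma of Amiri, Jafarian Amiri and Isaacs) to force any putative minimal counterexample to contain an element of very large order --- of index less than $p$, or at worst of prime index. Such an element produces a normal cyclic (or nearly cyclic) Sylow subgroup, so by Schur--Zassenhaus $G$ is a semidirect product $P\rtimes_\varphi T$; comparing $|E(P\rtimes_\varphi T)|$ with $|E(P\times T)|$ and constructing a bijection $\lambda:G\to\mathbb{Z}_n$ with $\ord(g)\mid\ord(\lambda(g))$ then finishes the proof. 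The decisive lever is structural group theory (Burnside transfer, Schur--Zassenhaus, the classification of $p$-groups with a cyclic subgroup of index $p$), not a refinement of divisor counts. To salvage your route you would need to identify and prove additional linear inequalities among the $c_d(G)$ (beyond Frobenius) strong enough to dominate the positive $C_e$; that is an interesting question, but it is precisely the unproved step.
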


Theorem \ref{thm:main} gives that (ii) implies (iii) in the following.  The others are trivial.

\begin{cor}
\label{cor:isomisom}
Let $G$ be a finite group.  Then the following are equivalent for all $n\geq 1$.\begin{enumerate}
\item $\pg(G)\cong\pg(\mathbb{Z}_n)$.
\item $|E(G)|=|E(\mathbb{Z}_n)|$.
\item $G\cong \mathbb{Z}_n$.
\end{enumerate}
\end{cor}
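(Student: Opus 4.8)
The plan is to convert the edge count of $\pg(G)$ into a statistic of element orders, and then to reduce the problem to the directed statement (Theorem \ref{cor:maximume}) together with one new inequality about orders. First I would compute the degree of a vertex $g$ in $\pg(G)$. Its neighbours split into two disjoint families: the $\ord(g)-1$ other elements of $\langle g\rangle$ (those $h$ with $\langle h\rangle\subseteq\langle g\rangle$), and the elements $h$ with $\langle g\rangle\subsetneq\langle h\rangle$, i.e.\ those of which $g$ is a proper power. Summing the second family over all $g$ counts ordered pairs $(g,h)$ with $\langle g\rangle\subsetneq\langle h\rangle$; grouping instead by $h$, each $h$ contributes the number of non-generators of $\langle h\rangle$, namely $\ord(h)-\phi(\ord(h))$. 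Hence
\[ 2|E(G)| = \sum_{g\in G}\bigl(\ord(g)-1\bigr) + \sum_{g\in G}\bigl(\ord(g)-\phi(\ord(g))\bigr) = D(G) + T(G), \]
where $D(G)=\sum_{g}(\ord(g)-1)=|\overrightarrow{E}(G)|$ is exactly the number of directed edges and $T(G)=\sum_{g}(\ord(g)-\phi(\ord(g)))$.

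By Theorem \ref{cor:maximume} we have $D(G)\le D(\mathbb{Z}_n)$, so it suffices to prove the companion inequality $T(G)\le T(\mathbb{Z}_n)$; adding the two yields $2|E(G)|\le 2|E(\mathbb{Z}_n)|$. It is worth recording that the prime-power case is immediate: in $\mathbb{Z}_{p^k}$ the cyclic subgroups form a chain, so $\pg(\mathbb{Z}_{p^k})$ is complete and therefore already has the maximum number of edges among all graphs on $p^k$ vertices. Thus all the content is in non-prime-power orders and in the single inequality $T(G)\le T(\mathbb{Z}_n)$.

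The main obstacle is precisely this inequality. A natural first attempt uses Frobenius's theorem: for each $e\mid n$ the number $A_e(G)=|\{g\in G: g^e=1\}|$ is a multiple of $e$, so $A_e(G)\ge e=A_e(\mathbb{Z}_n)$, meaning $\mathbb{Z}_n$ simultaneously minimises every $A_e$. Expressing $T$ through these cumulative counts by M\"obius inversion over the divisor lattice of $n$ writes $T(\mathbb{Z}_n)-T(G)$ as $\sum_{e\mid n}\bigl(A_e(G)-A_e(\mathbb{Z}_n)\bigr)c_e$ for explicit coefficients $c_e$. The difficulty is that the $c_e$ do not all have a favourable sign---already for $n=12$ the coefficient at $e=2$ points the wrong way---reflecting the fact that a group cannot make all the $A_e$ large at once: an excess of low-order elements forces a deficit of high-order ones. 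So a termwise bound fails and the joint constraints among the $A_e$ must be exploited.

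To overcome this I would argue by induction on the number of distinct primes dividing $n$, peeling off a suitable (greatest or least) prime $p$, writing $n=p^a m$, and splitting each element order into its $p$-part and its $p'$-part. I would then compare $G$ with the group obtained by ``cyclicising'' along $p$ and apply the inductive hypothesis to the order-$m$ behaviour, paralleling the proof of the directed analogue. The crux of the whole argument is this inductive step: controlling, inside an arbitrary and possibly non-solvable group, the interaction between the $p$-part and the $p'$-part of element orders, that is, showing that the maximal spreading of orders realised by $\mathbb{Z}_n$ is optimal for the weight $\ord(g)-\phi(\ord(g))$. I expect this to be a genuine extremal-order statement of the same depth as Theorem \ref{cor:maximume}, rather than a formal consequence of it.
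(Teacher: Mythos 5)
Your identity $2|E(G)|=\sum_{g\in G}(\ord(g)-1)+\sum_{g\in G}(\ord(g)-\phi(\ord(g)))$ is correct (it is (\ref{eq:EGsize}) rearranged), and the reduction is sound in principle: if one knew
$\sum_{g\in G}(\ord(g)-\phi(\ord(g)))\leq\sum_{z\in\mathbb{Z}_n}(\ord(z)-\phi(\ord(z)))$,
then combining it with Lemma \ref{lem:PGcyclicprimepower2} (which supplies both the inequality on $\sum_{g}\ord(g)$ and its equality case) would give (ii)$\Rightarrow$(iii), and the remaining implications are indeed trivial. But the proposal stops exactly where the work begins: the companion inequality is asserted rather than proved, and you yourself flag it as ``a genuine extremal-order statement of the same depth as Theorem \ref{cor:maximume}.'' The sketched induction --- peel off a prime $p$, split each order into its $p$-part and $p'$-part, and compare $G$ with a ``cyclicised'' group --- presupposes a decomposition of $G$ along $p$ that an arbitrary finite group does not possess; the sketch says nothing about, for instance, a nonabelian simple group, where there is no normal Sylow subgroup and no complement on which to run the induction. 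That is a genuine gap, not a routine detail.

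It is also worth noting that this obstruction is precisely what the paper spends its final sections overcoming. The corollary itself is deduced in one line from Theorem \ref{thm:main}, and Theorem \ref{thm:main} is proved by a minimal-counterexample argument: the counting estimates of Section \ref{sec:prelim} (Theorem \ref{thm:phiddineq}, Lemma \ref{lem:avez}, Lemma \ref{lem:phi(n)ineq}) force a putative counterexample to contain an element of order exceeding $n/p$; this yields a normal cyclic Sylow subgroup, and then Schur--Zassenhaus and Burnside's transfer theorem produce a semidirect-product decomposition $P\rtimes_\varphi T$, which is compared with $P\times T$ and finished off via the order-divisibility bijections of Lemmas \ref{lem:divideorder1}, \ref{lem:divideorder2} and \ref{lem:orddivbij:E=E,cyc}. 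Your weight $e-\phi(e)$ is monotone along divisibility (the same computation as Lemma \ref{lem:d(r)} with $2r$ replaced by $r$), so it would indeed succumb to that machinery once such a bijection is available --- but building that machinery is the substance of the paper, and the proposal offers no substitute for it.
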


 One special case of Theorem \ref{thm:main} is already known.

 \begin{thm}\cite[Theorem 2.12]{ch}
 \label{lem:PGcyclicprimepower1}
 A finite group has a complete power graph
if and only if it is cyclic and has prime power order.
\end{thm}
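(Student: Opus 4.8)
The plan is to prove both implications directly from the adjacency condition of the power graph, combined with standard facts about finite $p$-groups. Throughout, I would translate ``$g$ and $h$ are adjacent'' into the containment statement $g\in\langle h\rangle$ or $h\in\langle g\rangle$, i.e. one of the cyclic subgroups contains the other.

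For the implication that a cyclic group of prime power order $p^n$ has a complete power graph, write $G=\langle x\rangle$ and use two features of such groups: the element orders are exactly $1,p,p^2,\ldots,p^n$, and $G$ has a unique subgroup of each order dividing $p^n$, so its subgroups form a chain under inclusion. Given distinct $g,h$ with $\langle g\rangle$ of order $p^i$ and $\langle h\rangle$ of order $p^j$, say $i\le j$, the chain condition forces $\langle g\rangle\subseteq\langle h\rangle$, hence $g\in\langle h\rangle$ and $g$ is a positive power of $h$. Thus every pair is adjacent and $\pg(G)$ is complete.

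For the converse, assume $\pg(G)$ is complete. First I would show $|G|$ is a prime power. If two distinct primes $p\neq q$ divided $|G|$, then Cauchy's theorem would supply elements $x,y$ with $\langle x\rangle$ of order $p$ and $\langle y\rangle$ of order $q$; completeness forces (say) $x\in\langle y\rangle$, whence $p\mid q$, a contradiction. Hence $|G|=p^n$. Next I would argue $G$ is cyclic by contradiction: a non-cyclic finite $p$-group has at least two distinct subgroups of order $p$, yielding elements $x,y$ of order $p$ with $\langle x\rangle\neq\langle y\rangle$. Since each generates its own subgroup of order $p$, neither lies in the cyclic subgroup generated by the other, so $x$ and $y$ are non-adjacent, contradicting completeness. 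Therefore $G$ is cyclic of prime power order.

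The only nonroutine ingredient is the structural fact that a non-cyclic finite $p$-group possesses more than one subgroup of order $p$ (equivalently, a finite $p$-group is cyclic precisely when it has a unique subgroup of order $p$), which is classical. Everything else is a matter of unwinding the adjacency condition into containment of cyclic subgroups and invoking Cauchy's theorem, so I expect this structural lemma to be the main point on which the argument rests.
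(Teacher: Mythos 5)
The forward direction and the reduction to prime power order are fine, but your final step rests on a false lemma. It is not true that a non-cyclic finite $p$-group has more than one subgroup of order $p$: for $p=2$ the generalized quaternion groups $Q_{2^n}$ are non-cyclic yet have a unique subgroup of order $2$. The classical classification says that a finite $p$-group with a unique subgroup of order $p$ is cyclic when $p$ is odd, but cyclic \emph{or generalized quaternion} when $p=2$. So your argument, as written, does not rule out $G\cong Q_{2^n}$. The theorem is still true for these groups --- in $Q_8$, for instance, $i\notin\langle j\rangle$ and $j\notin\langle i\rangle$, so $i$ and $j$ are non-adjacent and $\pg(Q_8)$ is not complete --- but your proof does not see this case, and you explicitly identified the faulty lemma as the main point on which the argument rests.

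A clean repair avoids the structural lemma altogether. Assume $\pg(G)$ is complete and let $g$ be an element of maximal order. Every $h\in G$ is equal or adjacent to $g$, so $h\in\langle g\rangle$ or $g\in\langle h\rangle$; in the second case $\langle g\rangle\subseteq\langle h\rangle$ together with maximality of $\ord(g)$ forces $\langle h\rangle=\langle g\rangle$, so again $h\in\langle g\rangle$. Hence $G=\langle g\rangle$ is cyclic, and your Cauchy-type argument (elements of distinct prime orders are non-adjacent) then gives prime power order. For what it is worth, the paper only cites this result from Chakrabarty, Ghosh, and Sen and supplies no proof of its own, so there is no internal argument to compare against.
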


\section{Edges in power graphs}

Let $G$ be a finite group.  For $g \in G$, let
$\ord(g)$ denote the order of $g$ as a group element and let $\deg(g)$ denote the degree of $g$ as a vertex of $\pg(G)$.   Throughout $\phi(n)$ shall
denote the Euler totient function of the natural number $n$.

Pick $g\in G$.  Observe that $g$ has out-degree $\ord(g) -1$ since there is a directed edge from $g\in G$ to each element of $\langle g\rangle-\{g\}$.  There is a directed edge from each $h\in G-\{g\}$ to $g$ for which $g\in\langle h \rangle$, so the in-degree of $g$ is
$|\{ h\in  G-\{g\}\,|\, g\in\langle h\rangle\}|$. To account for directed edges which give the same undirected edge in the power graph of $G$, we introduce the following set.

\begin{defn}
The set of {\em bidirectional edges} $\overleftrightarrow{E}(G)$
of $\overrightarrow{\pg}(G)$  is set of unordered pairs $\{
\{g,h\} \,|\, (g,h)\in \overrightarrow{E}(G) \hbox{ and }
                        (h,g)\in\overrightarrow{E}(G) \}$.
That is, $\overleftrightarrow{E}(G)$ consists of pairs of distinct elements, each of which is a positive power of the other.
\end{defn}

\begin{lem}
\label{lem:bidi=gensame}
Let $G$ be a finite group, and let $g$, $h$ be distinct elements of $G$.  Then
   $\{g,h\}\in\overleftrightarrow{E}(G)$
if and only if
   $\langle g\rangle=\langle h\rangle$.
\end{lem}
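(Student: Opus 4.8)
The plan is to unwind the definitions and reduce everything to the elementary fact that, in a finite group, the cyclic subgroup $\langle g\rangle$ coincides with the set of positive powers of $g$. By the definition of $\overleftrightarrow{E}(G)$, the pair $\{g,h\}$ lies in $\overleftrightarrow{E}(G)$ exactly when both $(g,h)\in\overrightarrow{E}(G)$ and $(h,g)\in\overrightarrow{E}(G)$. Expanding the definition of $\overrightarrow{E}(G)$, the first condition says $h\in\langle g\rangle-\{g\}$ and the second says $g\in\langle h\rangle-\{h\}$. Since $g$ and $h$ are assumed distinct, the exclusions $-\{g\}$ and $-\{h\}$ are automatically satisfied, so the conjunction is equivalent to the plain membership statements $h\in\langle g\rangle$ and $g\in\langle h\rangle$.

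First I would record the observation that in a finite group every element of $\langle g\rangle$ is a positive power of $g$ (because $\langle g\rangle=\{g^1,g^2,\dots,g^{\ord(g)}\}$ with $g^{\ord(g)}$ the identity), which is what legitimizes replacing ``$h$ is a positive power of $g$'' by ``$h\in\langle g\rangle$.'' With this in hand the two directions are short.

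For the forward direction, suppose $\{g,h\}\in\overleftrightarrow{E}(G)$, so $h\in\langle g\rangle$ and $g\in\langle h\rangle$. From $h\in\langle g\rangle$ we get $\langle h\rangle\subseteq\langle g\rangle$, since $\langle h\rangle$ is the smallest subgroup containing $h$ and $\langle g\rangle$ is a subgroup containing $h$; symmetrically $g\in\langle h\rangle$ gives $\langle g\rangle\subseteq\langle h\rangle$. The two inclusions yield $\langle g\rangle=\langle h\rangle$. For the reverse direction, assume $\langle g\rangle=\langle h\rangle$. Then $h\in\langle h\rangle=\langle g\rangle$, and since $h\neq g$ we have $h\in\langle g\rangle-\{g\}$, i.e.\ $(g,h)\in\overrightarrow{E}(G)$; interchanging the roles of $g$ and $h$ gives $(h,g)\in\overrightarrow{E}(G)$, so $\{g,h\}\in\overleftrightarrow{E}(G)$.

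There is no real obstacle here; the statement is a direct translation between the edge definitions and subgroup containment. The only point requiring care is the handling of ``positive power'' versus membership in $\langle g\rangle$ (in particular, remembering that the identity is itself a positive power of $g$), together with the bookkeeping of the $-\{g\}$ and $-\{h\}$ exclusions, both of which are handled cleanly by the hypothesis $g\neq h$.
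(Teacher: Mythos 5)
Your proof is correct and follows the only natural route: unwinding the definitions of $\overrightarrow{E}(G)$ and $\overleftrightarrow{E}(G)$ and using mutual containment of cyclic subgroups. The paper simply declares this ``straightforward from the definition,'' so your argument is just a fully written-out version of the same idea.
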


\begin{proof}
Straightforward from the definition of adjacency in the directed power graph.
\end{proof}

\begin{lem}\cite[Theorem 4.2]{ch}
\label{lem:edgesizes}
Let $G$ be a finite group of order $n$. Then
\begin{eqnarray}
\label{eq:dEGsize}
    |\overrightarrow{E}(G)|
             &=&  \sum_{g\in G} (\ord(g) -1), \\
\label{eq:bdEGsize}
     |\overleftrightarrow{E}(G)|
             &=& \frac{1}{2} \sum_{g\in G}( \phi(\ord(g))-1 ), \\
\label{eq:EGsize}
     |E(G)|&=& |\overrightarrow{E}(G)|-|\overleftrightarrow{E}(G)|
               = \frac{1}{2}\sum_{g\in G} \left(2\ord(g) - \phi(\ord(g)) - 1\right).
\end{eqnarray}
\end{lem}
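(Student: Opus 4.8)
The plan is to prove all three displayed formulas by direct counting, deriving \eqref{eq:dEGsize} and \eqref{eq:bdEGsize} independently and then obtaining \eqref{eq:EGsize} by combining them. For \eqref{eq:dEGsize} I would simply sum out-degrees. As recorded in the paragraph preceding the statement, the out-degree of $g$ in $\overrightarrow{\pg}(G)$ is $\ord(g)-1$, since there is exactly one directed edge from $g$ to each of the $\ord(g)-1$ elements of $\langle g\rangle-\{g\}$. Summing over all $g\in G$ counts each directed edge exactly once, by its tail, which yields $|\overrightarrow{E}(G)|=\sum_{g\in G}(\ord(g)-1)$.

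For \eqref{eq:bdEGsize} I would count incidences of bidirectional edges at each vertex. By Lemma \ref{lem:bidi=gensame}, $\{g,h\}\in\overleftrightarrow{E}(G)$ precisely when $\langle g\rangle=\langle h\rangle$, which is to say that $h$ is a generator of the cyclic group $\langle g\rangle$ distinct from $g$. Since a cyclic group of order $m$ has exactly $\phi(m)$ generators, the number of such $h$ is $\phi(\ord(g))-1$. Summing this count over all $g\in G$ tallies each bidirectional edge twice, once from each endpoint, so dividing by $2$ gives the stated formula.

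Finally, for \eqref{eq:EGsize} the key observation is that passing from the directed to the undirected power graph identifies each undirected edge with the directed edge or edges lying over it: a non-bidirectional undirected edge comes from a single directed edge, whereas a bidirectional one comes from two. Hence $|\overrightarrow{E}(G)|$ exceeds $|E(G)|$ by exactly the number of bidirectional edges, giving $|E(G)|=|\overrightarrow{E}(G)|-|\overleftrightarrow{E}(G)|$. Substituting the two formulas just established and simplifying the summand $2(\ord(g)-1)-(\phi(\ord(g))-1)$ to $2\ord(g)-\phi(\ord(g))-1$ produces the closed form $\tfrac{1}{2}\sum_{g\in G}\bigl(2\ord(g)-\phi(\ord(g))-1\bigr)$.

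I expect no serious obstacle here, as the argument is essentially bookkeeping. The only points that genuinely require care are the two double-counting conventions, namely ensuring that each bidirectional edge in \eqref{eq:bdEGsize} and each undirected edge in \eqref{eq:EGsize} is weighted correctly, together with the standard fact that a cyclic group of order $m$ has $\phi(m)$ generators, which is what makes the Euler totient appear.
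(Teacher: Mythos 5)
Your proposal is correct and follows essentially the same route as the paper: summing out-degrees for \eqref{eq:dEGsize}, invoking Lemma \ref{lem:bidi=gensame} and the $\phi(m)$-generator count with a division by two for \eqref{eq:bdEGsize}, and subtracting the number of bidirectional edges to pass to the undirected count in \eqref{eq:EGsize}. No issues.
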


\begin{proof}
The sum in (\ref{eq:dEGsize}) adds out-degrees of vertices, and
thus counts each directed edge once. Now (\ref{eq:bdEGsize}) follows
from Lemma \ref{lem:bidi=gensame} and the fact that a cyclic group of order $\ord(g)$ has ($\phi(\ord(g))$)-many generators.  Indeed, $\phi(\ord(g))-1$ such edges leave $g$, and summing over all $G$ double counts these edges.  For the first equality
in (\ref{eq:EGsize}), count the edges in the directed power graph,
and subtract one for each pair of oppositely oriented directed edges to avoid double counting.
The second equality in (\ref{eq:EGsize}) follows from (\ref{eq:dEGsize})  and (\ref{eq:bdEGsize}).
\end{proof}

We give the number of edges in the undirected power graph of
$\mathbb{Z}_n$.  We use the following notation.

\begin{nta}
\label{nta:nfactored}
Let $n$ be a positive integer.
Write
    $n = p_1^{\alpha_1}p_2^{\alpha_2}\cdots p_k^{\alpha_k}$
for primes
     $p_1 < p_2 < \cdots <p_k$.
Let $q=p_1$ and $p=p_k$ be the least and greatest prime divisors of $n$, and abbreviate $\beta=\alpha_1$ and $\alpha=\alpha_k$.
\end{nta}

It is well-known (see  \cite[p.~27]{a'}, for instance) that
\begin{equation}
\label{eq:phi(n)primes}
 \phi(n) = p_1^{\alpha_1-1}(p_1-1)p_2^{\alpha_2-1}(p_2-1)
                                     \cdots p_k^{\alpha_k-1}(p_k-1).
\end{equation}
As a consequence, we have \cite[p.~28]{a'}
\begin{equation}
\label{eq:phi(product)}
\phi(nm) =\phi(n)\phi(m) \frac{\gcd(n,m)}{\phi(\gcd(n,m))}.
\end{equation}

\begin{lem}
\label{lem:sumZn} (See also \cite{burton:ENT}, page 143, exercise 5)
With  Notation \ref{nta:nfactored},
\begin{eqnarray}
\label{eq:sumordZn}
 \sum_{z\in\mathbb{Z}_n} \ord(z)
   &=&\sum_{d|n} \phi(d)d
  \ = \      \prod_{h=1}^{k} \frac{p_h^{2\alpha_h+1}+1}{p_h+1},\\
\label{eq:sumphiordZn}
  \sum_{z\in \mathbb{Z}_n} \phi(\ord(z))
  &=& \sum_{d|n} \phi(d)^2
  \ = \  \prod_{h=1}^{k}  \frac{p_h^{2\alpha_h}(p_h-1)+2}{p_h+1}.
\end{eqnarray}
\end{lem}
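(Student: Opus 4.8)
The plan is to prove each identity in two moves: first establish the middle divisor-sum expression by a counting argument, then evaluate the right-hand product by exploiting multiplicativity.

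For the first equalities in both \eqref{eq:sumordZn} and \eqref{eq:sumphiordZn}, I would group the elements of $\mathbb{Z}_n$ according to their order. The key classical fact is that a cyclic group of order $n$ contains exactly $\phi(d)$ elements of order $d$ for each divisor $d$ of $n$. Summing $\ord(z)$ over all $z\in\mathbb{Z}_n$ then becomes $\sum_{d\mid n}\phi(d)\,d$, since each of the $\phi(d)$ elements of order $d$ contributes $d$; likewise, summing $\phi(\ord(z))$ gives $\sum_{d\mid n}\phi(d)^2$, since each such element contributes $\phi(d)$.

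For the second (product) equalities, I would observe that $d\mapsto\phi(d)\,d$ and $d\mapsto\phi(d)^2$ are multiplicative arithmetic functions, each being a product of multiplicative functions (using that $\phi$ is multiplicative). Hence the divisor sums $\sum_{d\mid n}\phi(d)\,d$ and $\sum_{d\mid n}\phi(d)^2$ are themselves multiplicative in $n$, so each factors as a product over the prime powers $p_h^{\alpha_h}$ of the corresponding local sum. It then remains to evaluate, for a single prime power $p^{\alpha}$, the sums $\sum_{j=0}^{\alpha}\phi(p^j)\,p^j$ and $\sum_{j=0}^{\alpha}\phi(p^j)^2$.

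The computational heart of the proof, and the step I expect to require the most care, is evaluating these two local sums in closed form. Using $\phi(p^j)=p^j-p^{j-1}$ for $j\geq 1$ together with $\phi(1)=1$, both sums reduce to geometric series. For the first, $\sum_{j=0}^{\alpha}\phi(p^j)\,p^j = 1+\sum_{j=1}^{\alpha}(p^{2j}-p^{2j-1})$, which I would evaluate via $\sum p^{2j}$ and $\sum p^{2j-1}$ and simplify to $\frac{p^{2\alpha+1}+1}{p+1}$. For the second, $\sum_{j=0}^{\alpha}\phi(p^j)^2 = 1+(p-1)^2\sum_{i=0}^{\alpha-1}p^{2i}$, which simplifies to $\frac{p^{2\alpha}(p-1)+2}{p+1}$. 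Taking products over $h=1,\ldots,k$ then yields the two claimed product formulas. The algebra is elementary; the only real pitfalls are the bookkeeping of exponents in the geometric sums and the cancellation of the factor $p^2-1=(p-1)(p+1)$ against $(p-1)$ or $(p-1)^2$ in the numerators.
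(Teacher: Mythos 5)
Your proposal is correct and follows essentially the same route as the paper: the first equalities come from counting the $\phi(d)$ elements of order $d$, and the product formulas come from multiplicativity together with the same local geometric-series evaluations at each prime power. The only cosmetic difference is that the paper proves the multiplicativity of the divisor sums explicitly, by induction on the number of distinct prime factors (peeling off the least prime), whereas you invoke it as a standard fact about divisor sums of multiplicative functions.
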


\begin{proof}
For each $z\in \mathbb{Z}_n$, $\ord(z)$ is a divisor $d$ of $n$.  For each divisor $d$, there are $\phi(d)$-many other elements of $\mathbb{Z}_n$ with the same order.  Thus the first equality in (\ref{eq:sumordZn}) holds.
Similarly, for each of the $\phi(d)$-many elements of $\mathbb{Z}_n$
with the same order as $z$, $\phi(\ord(z)) = \phi(d)$. Thus the first equality in (\ref{eq:sumphiordZn}) holds.

When $n=1$, its only divisor is 1, and $\phi(1)=\phi(1)^2=1$.  Thus both
sums on the left are 1. There are no prime divisors of 1, so the
product on the right is empty, and hence 1. Thus both second
equalities holds when $k=0$. Assume that $n$ has $k\geq 1$ distinct prime divisors and that both second equalities
holds for all $n$ with at most $k-1$ distinct prime divisors.

Partition the divisors of $n$ according to the highest power of
$q$ which divides it. As $d$ runs over
         $d|n$ with $q^\ell|f$ and
         $q^{\ell+1}\not|f$, we have
           $d = f q^\ell$
as $f$ runs over the divisors of $n/q^{\beta}$. Since
        $q\not| n/q^{\beta}$,
(\ref{eq:phi(n)primes}) gives that
      $\phi(f q^\ell)= q^{\ell-1}(q-1)\phi(f)$.
Compute
\[
 \sum_{d|n} \phi(d)d
     =\sum_{\ell=0}^\beta \sum_{f|n/q^\beta} \phi(q^\ell f)f q^\ell
      =    \sum_{\ell=0}^\beta \phi(q^\ell)q^\ell \sum_{f|n/q^\beta} \phi(f)f
      = \left(1+\sum_{\ell=1}^\beta q^{\ell-1}(q-1)q^\ell\right)
           \sum_{f|n/q^\beta} \phi(f)f,
 \]
and
\[
\sum_{\ell=1}^\beta q^{2\ell-1}(q-1)
   =  \sum_{\ell=0}^{\beta-1} (q-1)q^{2\ell+1}
     =  \frac{q-1}{q} \cdot \sum_{\ell=0}^{\beta-1}(q^2)^\ell
      =    q(q-1) \cdot \frac{(q^{2\beta}-1)}{q^2-1}
   =  q\cdot\frac{q^{2\beta}-1}{q+1}
       =  \frac{q^{2\beta+1}+1}{q+1} - 1.
\]
Now the second equality in (\ref{eq:sumordZn}) follows by induction.

Similarly,
\[
 \sum_{d|n} \phi(d)^2
   = \sum_{\ell=0}^\beta \sum_{f|n/q^\beta} \phi(q^\ell f)^2
     =    \sum_{\ell=0}^\beta \phi(q^\ell)^2 \sum_{f|n/q^\beta} \phi(f)^2
   = \left(1+\sum_{\ell=1}^\beta (q^{\ell-1}(q-1))^2\right)
            \sum_{f|n/q^\beta} \phi(f)^2,
\]
and
\[
 \sum_{\ell=1}^\beta (q^{\ell-1}(q-1))^2
       = (q-1)^2 \sum_{\ell=0}^{\beta-1} q^{2\ell}
       = (q-1)^2 \frac{q^{2\beta}-1}{q^2-1}
       =  \frac{(q-1)(q^{2\beta}-1)}{q+1}.
\]
Now the second equality in (\ref{eq:sumphiordZn})  follows by induction.
\end{proof}

\begin{cor}
\label{cor:peelsumphiordZn}
With Notation \ref{nta:nfactored},\footnote{This corollary was after the published version.  It makes explicit a fact proved in Lemma \ref{lem:sumZn} for later use.}
\[
 \sum_{d|n} \phi(d)d = \frac{q^{2\beta+1}+1}{q+1}\cdot\sum_{f|n/q^\beta} \phi(f)f. 
\]
\end{cor}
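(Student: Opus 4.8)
The plan is to recognize this as an intermediate step already carried out inside the proof of Lemma~\ref{lem:sumZn} and simply to isolate it. The approach rests on two facts: the unique factorization of divisors of $n$ by their $q$-part, and the multiplicativity of the arithmetic function $d\mapsto\phi(d)\,d$.

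First I would partition the divisors of $n$ according to the exact power of $q=p_1$ dividing each. Every divisor $d\mid n$ can be written uniquely as $d=q^\ell f$ with $0\le\ell\le\beta$ and $f\mid n/q^{\beta}$; since $q\nmid n/q^{\beta}$, we have $\gcd(q^\ell,f)=1$, so $\phi(q^\ell f)(q^\ell f)=\phi(q^\ell)q^\ell\cdot\phi(f)f$. This lets me factor
\[
 \sum_{d\mid n}\phi(d)d
   =\left(\sum_{\ell=0}^{\beta}\phi(q^\ell)q^\ell\right)
     \left(\sum_{f\mid n/q^{\beta}}\phi(f)f\right),
\]
which is precisely the first displayed equation in the proof of Lemma~\ref{lem:sumZn}.

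The remaining step is to evaluate the $q$-local factor and show that $\sum_{\ell=0}^{\beta}\phi(q^\ell)q^\ell=\frac{q^{2\beta+1}+1}{q+1}$. Using $\phi(1)=1$ and $\phi(q^\ell)=q^{\ell-1}(q-1)$ for $\ell\ge1$, this factor equals $1+\sum_{\ell=1}^{\beta}q^{2\ell-1}(q-1)$, a geometric series. The second display in the proof of Lemma~\ref{lem:sumZn} already computes $\sum_{\ell=1}^{\beta}q^{2\ell-1}(q-1)=\frac{q^{2\beta+1}+1}{q+1}-1$, so adding back the $\ell=0$ term gives exactly $\frac{q^{2\beta+1}+1}{q+1}$. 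Substituting this into the factorization above yields the claimed identity.

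The ``main obstacle'' here is only bookkeeping: correctly summing the geometric series $\sum_{\ell=1}^{\beta}q^{2\ell-1}(q-1)$ and simplifying it into the stated closed form. Since both the factorization and the geometric sum were established within the proof of Lemma~\ref{lem:sumZn}, the corollary is really just a matter of extracting and recording these intermediate computations; no new ideas are needed.
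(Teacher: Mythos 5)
Your proposal is correct and is exactly the paper's route: the corollary merely records the intermediate factorization $\sum_{d\mid n}\phi(d)d=\bigl(\sum_{\ell=0}^{\beta}\phi(q^\ell)q^\ell\bigr)\sum_{f\mid n/q^{\beta}}\phi(f)f$ together with the geometric-series evaluation $\sum_{\ell=0}^{\beta}\phi(q^\ell)q^\ell=\frac{q^{2\beta+1}+1}{q+1}$, both of which appear verbatim in the proof of Lemma \ref{lem:sumZn}. No further comment is needed.
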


\begin{cor}
With Notation \ref{nta:nfactored},
\[
 |E(\mathbb{Z}_n)|
     = \sum_{d|n} \phi(d)(d-\frac{\phi(d)}{2})  -\frac{n}{2}
        =  \prod_{h=1}^{k} \frac{p_h^{2\alpha_h+1}+1}{p_h+1}
       - \frac{1}{2} \prod_{h=1}^{k}  \frac{p_h^{2\alpha_h}(p_h-1)+2}{p_h+1}
       - \frac{n}{2}.
\]
\end{cor}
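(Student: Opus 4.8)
The plan is to assemble the corollary directly from Lemma \ref{lem:edgesizes} and Lemma \ref{lem:sumZn}; no new idea is required, since all of the arithmetic has already been carried out and this statement merely specializes and recombines those results.

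First I would specialize the second equality in (\ref{eq:EGsize}) to $G=\mathbb{Z}_n$ and split the factor $\frac{1}{2}$ across the three summands, writing
\[
 |E(\mathbb{Z}_n)| = \frac{1}{2}\sum_{z\in\mathbb{Z}_n}\left(2\ord(z) - \phi(\ord(z)) - 1\right) = \sum_{z\in\mathbb{Z}_n}\ord(z) - \frac{1}{2}\sum_{z\in\mathbb{Z}_n}\phi(\ord(z)) - \frac{n}{2},
\]
where the last term uses $|\mathbb{Z}_n| = n$, so that $\sum_{z\in\mathbb{Z}_n} 1 = n$. Next I would invoke the first equalities in (\ref{eq:sumordZn}) and (\ref{eq:sumphiordZn}) to replace the two element-sums by divisor-sums, namely $\sum_{z\in\mathbb{Z}_n}\ord(z) = \sum_{d|n}\phi(d)d$ and $\sum_{z\in\mathbb{Z}_n}\phi(\ord(z)) = \sum_{d|n}\phi(d)^2$. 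Substituting these and merging the two divisor-sums into the single sum $\sum_{d|n}\phi(d)\bigl(d-\tfrac{\phi(d)}{2}\bigr)$ produces the first claimed equality. Finally, applying the product formulas (the second equalities in (\ref{eq:sumordZn}) and (\ref{eq:sumphiordZn})) to $\sum_{d|n}\phi(d)d$ and $\sum_{d|n}\phi(d)^2$ converts them into the two products over $h$, which gives the second claimed equality.

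There is no genuine obstacle to overcome; the only points requiring a little care are the correct distribution of the factor $\frac{1}{2}$ over the three summands of (\ref{eq:EGsize}) and the recombination of the two divisor-sums into the stated form $\phi(d)\bigl(d-\tfrac{\phi(d)}{2}\bigr)$. Both are routine bookkeeping, so the proof is essentially a one-line chain of substitutions once the earlier lemmas are cited.
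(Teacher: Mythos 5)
Your proposal is correct and is exactly the derivation the paper intends: the corollary is stated without proof precisely because it follows by substituting the two divisor-sum/product identities of Lemma \ref{lem:sumZn} into the specialization of (\ref{eq:EGsize}) to $\mathbb{Z}_n$, just as you do. The bookkeeping with the factor $\frac{1}{2}$ and the term $\sum_{z\in\mathbb{Z}_n}1=n$ is handled correctly, so nothing is missing.
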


\begin{lem}
With  Notation \ref{nta:nfactored}, pick $z \in\mathbb{Z}_{n}$, and write $e=\ord(z)$. Then
\begin{equation}
\label{eq:degz-Zn}
\deg(z) = e -1 -\phi(e) + \sum_{{ d | n/e}} \phi(de)
            = e -\phi(e) -1
                  +\phi(e) \sum_{d | n/e} \phi(d)\frac{\gcd(d,e)}{\phi(\gcd(d,e))}.
\end{equation}
\end{lem}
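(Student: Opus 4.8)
The plan is to compute $\deg(z)$ by counting the neighbors of $z$ in $\pg(\mathbb{Z}_n)$ directly, and then to rewrite the count using the formula $(\ref{eq:phi(product)})$ for $\phi$ of a product. Recall that in $\mathbb{Z}_n$ two distinct elements $g,h$ are adjacent exactly when one lies in the cyclic subgroup generated by the other, equivalently when $\langle g\rangle \subseteq \langle h\rangle$ or $\langle h\rangle \subseteq \langle g\rangle$. Since $\mathbb{Z}_n$ has a unique subgroup of each order dividing $n$, the subgroup $\langle z\rangle$ is the unique subgroup of order $e$, and the neighbors of $z$ are precisely those $w$ with either $\langle w\rangle \subseteq \langle z\rangle$ (i.e.\ $\ord(w) \mid e$) or $\langle z\rangle\subseteq\langle w\rangle$ (i.e.\ $e\mid\ord(w)$), excluding $z$ itself.

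First I would count the neighbors $w$ with $\ord(w)\mid e$. These are exactly the $e$ elements of $\langle z\rangle$; removing $z$ itself leaves $e-1$ of them. Next I would count the neighbors $w$ lying strictly outside $\langle z\rangle$, namely those with $\langle z\rangle \subsetneq \langle w\rangle$; these satisfy $e\mid\ord(w)$ but $\ord(w)\neq e$. Writing $\ord(w)=de$ where $d$ runs over the divisors of $n/e$, the number of elements of order $de$ is $\phi(de)$, so summing over $d\mid n/e$ gives $\sum_{d\mid n/e}\phi(de)$ such elements; but the term $d=1$ counts the $\phi(e)$ generators of $\langle z\rangle$, which already lie inside $\langle z\rangle$ and are not to be recounted. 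Hence the total degree is $(e-1) + \sum_{d\mid n/e}\phi(de) - \phi(e)$, which is exactly the first expression in $(\ref{eq:degz-Zn})$.

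For the second equality I would apply $(\ref{eq:phi(product)})$ with the pair $(d,e)$, which yields $\phi(de)=\phi(d)\phi(e)\,\gcd(d,e)/\phi(\gcd(d,e))$. Substituting this into the sum $\sum_{d\mid n/e}\phi(de)$ and pulling the common factor $\phi(e)$ outside the sum transforms the first expression into the second. This is a routine algebraic rearrangement once the counting is set up correctly.

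The main obstacle, and the point requiring the most care, is the bookkeeping of which neighbors get counted in each of the two ranges and ensuring no element is double-counted or omitted. In particular one must be careful that the $d=1$ term of the sum $\sum_{d\mid n/e}\phi(de)=\sum_{d\mid n/e}\phi(de)$ reproduces the $\phi(e)$ generators already included among the $e-1$ elements of $\langle z\rangle\setminus\{z\}$, which is precisely why the correction term $-\phi(e)$ appears. Verifying that the adjacency in $\pg(\mathbb{Z}_n)$ reduces cleanly to the subgroup-containment conditions $\ord(w)\mid e$ or $e\mid\ord(w)$ relies essentially on the fact that $\mathbb{Z}_n$ has a unique subgroup of each order, so I would state that structural fact explicitly before beginning the count.
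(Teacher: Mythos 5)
Your argument is correct and is essentially the paper's own proof: both count $e-1$ neighbors inside $\langle z\rangle$, count the elements whose cyclic subgroup strictly contains $\langle z\rangle$ via $\sum_{d\mid n/e}\phi(de)$ with the $d=1$ term $\phi(e)$ removed to avoid recounting the generators of $\langle z\rangle$, and obtain the second equality by applying (\ref{eq:phi(product)}) termwise and factoring out $\phi(e)$. The only cosmetic difference is that the paper organizes the count as out-degree plus in-degree with a correction for bidirectional edges, while you partition the undirected neighbors directly by subgroup containment; the content is the same.
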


\begin{proof}
The term $\ord(z) -1=e-1$ is the out-degree of $z$. There is a directed edge from each element
         $x\in \mathbb{Z}_{n}$ to $z$
whenever
         $\ord(z)|\ord(x)$.
For such an $x$, $\ord(x)$ is $\ord(z)$ times a divisor of  $n/\ord(z)$.
There are $\phi(k\ord(z))$-many generators of $\langle x\rangle$.  Thus the in-degree of $z$ is $\sum_{d | n/e} \phi(d e)$.
However, to avoid double counting when dropping the orientation, we must exclude those elements with the same order as $z$, i.e., the case $k=1$ where $\phi(ke)=\phi(e)$.       Hence the first equality holds.  The second equality follows from (\ref{eq:phi(product)}) since the summand for $d=1$  is 1.
\end{proof}

\begin{lem}
\label{lem:Inducedsubgaph} Let $G$ be a group, and let $H$ be a
subgroup of $G$.
\begin{enumerate}
\item  $\overrightarrow{\pg}(H)$ is an induced subgraph of $\overrightarrow{\pg}(G)$.
\item All out-edges from an element of $H$ terminate at an element of $H$.
\item $\pg({H})$ is an induced subgraph of $\pg(G)$.
\end{enumerate}
In particular, the adjacencies and non-adjacencies between
elements of $H$ are the same in $\overrightarrow{\pg}(H)$ and
$\overrightarrow{\pg}(G)$, and similarly for $\pg({H})$ and
$\pg(G)$.
\end{lem}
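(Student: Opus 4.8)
The whole lemma rests on a single observation, which I would state first: for $g\in H$ the cyclic subgroup $\langle g\rangle$ generated by $g$ is the same set whether $g$ is regarded as an element of $H$ or of $G$. In either case $\langle g\rangle$ is exactly the set of powers of $g$, and these do not depend on the ambient group. From this all three statements follow by unwinding definitions, so the plan is to record this fact and then treat the directed case, the containment of out-edges, and finally the undirected case, which I will read off from the definition of $\pg(G)$.

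For part (i), I would fix $g,h\in H$ and compare the two defining conditions for a directed edge. By definition $(g,h)\in\overrightarrow{E}(H)$ precisely when $h\in\langle g\rangle-\{g\}$ with the subgroup computed inside $H$, whereas $(g,h)\in\overrightarrow{E}(G)$ precisely when $h\in\langle g\rangle-\{g\}$ with the subgroup computed inside $G$. The opening observation identifies these two conditions, so the directed edges of $\overrightarrow{\pg}(G)$ with both endpoints in $H$ are exactly the directed edges of $\overrightarrow{\pg}(H)$. Since $\overrightarrow{\pg}(H)$ has vertex set $H$, this says precisely that it is the subgraph of $\overrightarrow{\pg}(G)$ induced on $H$.

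Part (ii) is then immediate: any out-edge from $g\in H$ terminates at some $h\in\langle g\rangle-\{g\}$, and because $H$ is a subgroup containing $g$ we have $\langle g\rangle\subseteq H$, so $h\in H$. For part (iii), I would invoke Definition \ref{defn:first}, which defines adjacency in the undirected power graph solely through the directed edge set: $\{g,h\}\in E(G)$ if and only if $(g,h)\in\overrightarrow{E}(G)$ or $(h,g)\in\overrightarrow{E}(G)$. For $g,h\in H$, part (i) shows that each of these two directed conditions holds in $G$ exactly when it holds in $H$; hence $\{g,h\}\in E(G)$ if and only if $\{g,h\}\in E(H)$, which is the assertion that $\pg(H)$ is the induced subgraph of $\pg(G)$ on $H$. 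The concluding ``in particular'' clause about matching adjacencies and non-adjacencies is a restatement of the induced-subgraph property in each of the two graphs.

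I expect no real obstacle here; the only point needing a moment's care is to state cleanly that $\langle g\rangle$ is unambiguous between $H$ and $G$, after which each assertion is a direct consequence of the definitions.
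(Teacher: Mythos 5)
Your argument is correct and matches the paper's approach: the paper simply states that the lemma is straightforward from the definitions, and your writeup fills in exactly those details, with the key point (that $\langle g\rangle$ is the same set whether computed in $H$ or in $G$) correctly identified and used.
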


\begin{proof}
Straightforward from the definitions.
\end{proof}

\section{Some inequalities}\label{sec:prelim}

We develop some inequalities.

\begin{lem}
\label{lem:inequalities} Let $G$ be a finite group.  Then the following hold.
\begin{enumerate}
\item  $|G|\leq |E(G)|+1$
with equality if and only if $G$ is an elementary abelian
$2$-group.

\item $|E(G)|\leq  |\overrightarrow{E}(G)|$ with equality if and only if
          $G$ is an elementary abelian $2$-group.
\item
\label{item:equality}  
\footnote{This statement of Lemma \ref{item:equality} went awry in the published version.  The  proof is essentially the same.}
$2 |\overleftrightarrow{E}(G)|\leq|\overrightarrow{E}(G)|-|G|+1$ 
with equality if and only if 
every nonidentity element of $G$ has prime order, i.e., $G$ is an EPO-group. 
\end{enumerate}
\end{lem}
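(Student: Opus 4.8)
The plan is to read all three inequalities directly off the closed formulas of Lemma~\ref{lem:edgesizes} and then argue element-by-element across $G$, exploiting two elementary properties of the Euler totient: for $m \geq 2$ one has $\phi(m) \leq m-1$ with equality exactly when $m$ is prime, and $\phi(m) = 1$ exactly when $m \in \{1,2\}$. In each case the identity element will contribute a harmless $0$ to the relevant sum, while every nonidentity element contributes a nonnegative quantity whose minimal value pins down the equality condition.

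For (iii), I would substitute the formulas for $|\overrightarrow{E}(G)|$ and $|\overleftrightarrow{E}(G)|$ and rearrange: since $2|\overleftrightarrow{E}(G)| = \sum_{g}(\phi(\ord(g))-1)$ and $|\overrightarrow{E}(G)| = \sum_g(\ord(g)-1)$, the claim $2|\overleftrightarrow{E}(G)| \leq |\overrightarrow{E}(G)| - |G| + 1$ is equivalent to $\sum_{g \in G}\bigl(\ord(g) - \phi(\ord(g))\bigr) \geq |G| - 1$. Writing $m = \ord(g)$, the summand is $0$ for the identity and is $m - \phi(m) \geq 1$ otherwise, with equality precisely when $m$ is prime; summing over the $|G|-1$ nonidentity elements yields the bound. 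For (ii), because $|E(G)| = |\overrightarrow{E}(G)| - |\overleftrightarrow{E}(G)|$, the inequality is merely the assertion $|\overleftrightarrow{E}(G)| \geq 0$, immediate from $\tfrac{1}{2}\sum_{g}(\phi(\ord(g)) - 1)$ since each summand is nonnegative. For (i), I would use the $|E(G)|$ formula directly: each nonidentity element of order $m$ contributes $\tfrac{1}{2}(2m - \phi(m) - 1) \geq 1$, since $2m-\phi(m) \geq 3$ for $m\geq 2$, with equality exactly when $m = 2$, while the identity contributes $0$; hence $|E(G)| \geq |G| - 1$.

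The equality analysis is the one place requiring care, and I expect it to be the main obstacle rather than the inequalities themselves. In (iii) equality forces every nonidentity element to have prime order, which is the definition of an EPO-group. In both (i) and (ii) the pointwise conditions force every nonidentity element to have order $2$, i.e.\ $G$ to have exponent $2$; I would close by invoking the standard fact that a group of exponent $2$ is abelian, so such a $G$ is an elementary abelian $2$-group, and conversely such groups clearly attain equality. Thus the substantive work is entirely the bookkeeping of totient inequalities and the faithful translation of each pointwise equality condition into the stated group-theoretic description.
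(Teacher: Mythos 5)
Your proof is correct, but it takes a genuinely different route from the paper's. The paper derives each inequality by a direct graph-theoretic count: for (i), every nonidentity element is adjacent to the identity (a spanning star); for (ii), every undirected edge arises from at least one directed edge; and for (iii), the $|G|-1$ edges incident to the identity are never bidirectional while each bidirectional edge consumes two directed edges. The paper then settles the equality case of (iii) by exhibiting, for an element $g$ of composite order $pm$, the non-bidirectional edge $\{g,g^p\}$, and conversely by observing that in an EPO-group the cyclic subgroups $\langle g\rangle$ pairwise meet only in the identity. You instead reduce all three statements to the closed formulas of Lemma \ref{lem:edgesizes} together with the pointwise totient facts $\phi(m)\leq m-1$ for $m\geq 2$ (equality iff $m$ is prime) and $\phi(m)=1$ iff $m\in\{1,2\}$. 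Your route is more uniform and makes the equality analysis mechanical: in particular, recasting (iii) as $\sum_{g\in G}\bigl(\ord(g)-\phi(\ord(g))\bigr)\geq |G|-1$ with termwise equality iff $\ord(g)$ is prime dispenses entirely with the paper's case analysis on which edges are bidirectional, at the modest cost of obscuring which edges account for the slack. Both arguments are complete; your closing observation that exponent $2$ forces $G$ to be abelian, hence elementary abelian, is needed to match the stated equality conditions in (i) and (ii) and is the same step the paper leaves implicit.
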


\begin{proof}
(i): Every nonidentity element of $G$ is adjacent to the identity,
so
           $|G|\leq |E(G)|+1$.
Equality holds
     if and only if
every nonidentity element of $G$ has order two.

(ii):  Every undirected edge arises from a directed edge, so the
inequality holds.  By (\ref{eq:EGsize}), $|E(G)| =
|\overrightarrow{E}(G)|$
  if and only if
there are no bidirectional edges
    if and only if
for all $g\in G$, $g$ is the only generator of   $\langle
g\rangle$
    if and only if
for all $g\in G$, one is the only number both less than and
coprime to $\ord(g)$
    if and only if
every element of $G$ has order two.

(iii):  The ($|G|-1$)-many edges to the identity are not
bidirectional, and the bidirectional edges come from pairs of
directed edges. Thus
     $2 |\overleftrightarrow{E}(G)|\leq |\overrightarrow{E}(G)|-|G|+1$.
Now equality holds in (iii)
     if and only if
 every edge not incident to the identity is bidirectional.     
If some $g\in G$ has composite order, say $\ord(g)=pm$ for a prime
$p$ and $m>1$, then the edge between $g$ and $g^p$ is not
bidirectional.  Thus equality fails in this case.   If every element of $G$ has prime order, then each element generate a cyclic group of prime order.  These subgroups only have the identity in common.  Thus every edge not incident to the identity is bidirectional, so equality holds. 
\end{proof}

The remaining inequalities in this section pertain to $\mathbb{Z}_n$, and so are number theoretic in nature.

 \begin{lem}\cite[Main Theorem]{a}
 \label{lem:PGcyclicprimepower2}
Let $G$ be a group of finite order $n$.
Then
$\sum_{g\in G} \ord(g) \leq \sum_{z\in\mathbb{Z}_n} \ord(z)$,
with equality if and only if  $G$ is isomorphic to $\mathbb{Z}_n$.
\end{lem}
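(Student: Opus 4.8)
The plan is to work directly with the quantity $\psi(G)=\sum_{g\in G}\ord(g)$ and to prove $\psi(G)\le\psi(\mathbb{Z}_n)$, the value $\psi(\mathbb{Z}_n)=\sum_{d\mid n}\phi(d)d$ being already available from Lemma \ref{lem:sumZn}. The single external tool I would invoke is Frobenius's theorem: for every divisor $d$ of $n=|G|$ the number $\theta_d(G)=|\{x\in G:x^d=1\}|$ of solutions of $x^d=1$ is a multiple of $d$, hence (being nonzero) $\theta_d(G)\ge d$, with $\theta_d(\mathbb{Z}_n)=d$. The idea is to translate the assertion about $\psi$ into assertions about these solution-counts, where Frobenius applies.

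First I would record the identity obtained from $\ord(x)=\sum_{d\mid\ord(x)}\phi(d)$: interchanging the order of summation gives
\[
 \psi(G)=\sum_{d\mid n}\phi(d)\,M_d(G),\qquad M_d(G)=|\{x\in G:d\mid\ord(x)\}|,
\]
and likewise $\psi(\mathbb{Z}_n)=\sum_{d\mid n}\phi(d)M_d(\mathbb{Z}_n)$ with $M_d(\mathbb{Z}_n)=\sum_{d\mid e\mid n}\phi(e)$. Since every coefficient $\phi(d)$ is positive, the whole lemma follows once I establish the pointwise bound $M_d(G)\le M_d(\mathbb{Z}_n)$ for each $d\mid n$. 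This phrasing also yields the equality statement: for noncyclic $G$ the top term already has $M_n(G)=0<\phi(n)=M_n(\mathbb{Z}_n)$, so that single term forces $\psi(G)<\psi(\mathbb{Z}_n)$ while no other term can compensate.

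For a prime-power divisor $d=p^{j}$ the bound is immediate. Writing $n=p^{a}m$ with $p\nmid m$, an element satisfies $p^{j}\nmid\ord(x)$ exactly when $\ord(x)\mid p^{j-1}m$, so $M_{p^{j}}(G)=n-\theta_{p^{j-1}m}(G)$; since $p^{j-1}m\mid n$, Frobenius gives $\theta_{p^{j-1}m}(G)\ge p^{j-1}m=\theta_{p^{j-1}m}(\mathbb{Z}_n)$ and hence $M_{p^{j}}(G)\le M_{p^{j}}(\mathbb{Z}_n)$. In particular this already disposes of the case where $n$ is a prime power, i.e.\ it proves the lemma for $p$-groups outright; combined with the coprime multiplicativity $\psi(A\times B)=\psi(A)\psi(B)$ when $\gcd(|A|,|B|)=1$ (and the matching product formula for $\psi(\mathbb{Z}_n)$ in Lemma \ref{lem:sumZn}), it also settles all nilpotent $G$, since those are the direct products of their Sylow subgroups.

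The hard part is a composite divisor $d$ in a group that is not a direct product of its Sylow subgroups. Writing $d=p_1^{b_1}\cdots p_r^{b_r}$ and expanding the complementary event $d\nmid\ord(x)$ by inclusion--exclusion over the primes $p_i$ expresses $M_d(G)$ as an alternating sum of solution-counts $\theta_{D_S}(G)$; Frobenius controls each $\theta_{D_S}(G)$ from below, but the alternating signs prevent these lower bounds from combining into the desired upper bound on $M_d(G)$, so Frobenius alone is not enough. This is the step I expect to be the main obstacle. I would resolve it by induction on $|G|$: having reduced to a non-nilpotent $G$, I would pass to a proper normal subgroup (or a Sylow normalizer) to which the inductive hypothesis applies and combine the resulting comparison with the prime-power bounds above. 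What ultimately has to be controlled is the correlation between the different $p$-primary parts of $\ord(x)$, which is an exact independence (via the Chinese Remainder Theorem) precisely in the cyclic group, and this is what must be shown to work strictly against $G$ in the non-cyclic case.
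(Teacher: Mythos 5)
You should first be aware that the paper contains no proof of this lemma at all: it is imported verbatim as the Main Theorem of Amiri--Jafarian Amiri--Isaacs \cite{a}, so there is no internal argument to compare yours against. Judged on its own terms, your proposal has a genuine gap exactly where you flag ``the main obstacle.'' The reduction to the pointwise bound $M_d(G)\le M_d(\mathbb{Z}_n)$, the Frobenius argument for prime-power $d$, the consequences for $p$-groups and nilpotent groups, and the equality analysis via the $d=n$ term are all fine. But the entire content of the lemma lives in the composite-$d$, non-nilpotent case, and there you have not given an argument --- you have only described the difficulty. The pointwise bound for composite $d$ is itself a nontrivial assertion that Frobenius's theorem does not deliver: for $d=pq$ with $p^a\,\|\,n$ and $q^b\,\|\,n$, inclusion--exclusion gives $n-M_{pq}(G)=\theta_{n/p^a}+\theta_{n/q^b}-\theta_{n/(p^aq^b)}$ (where $\theta_k(G)=|\{x:x^k=1\}|$), and the configuration $\theta_{n/p^a}=n/p^a$, $\theta_{n/q^b}=n/q^b$, $\theta_{n/(p^aq^b)}=2n/(p^aq^b)$ is fully consistent with Frobenius and with all the containments among these sets, yet would violate your bound. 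Excluding such configurations requires extra arithmetic input (e.g.\ showing that $\theta_{ke}-\theta_{e}$ is forced to be a suitably large multiple of $e$ by the existence of elements of appropriate orders), none of which appears in the sketch.

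The proposed rescue --- induction on $|G|$ by ``passing to a proper normal subgroup (or a Sylow normalizer)'' --- does not obviously close this gap either. A nonabelian simple group has no proper nontrivial normal subgroup to pass to; and even when $N\lhd G$ exists, the inductive hypothesis controls only $\psi(N)$ or $M_d(N)$ and says nothing about the elements of $G\setminus N$, while the quotient gives only $\ord(xN)\mid\ord(x)$, which is the wrong direction for bounding $\ord(x)$ from above. So as written your argument proves the lemma only for prime-power $n$ and for nilpotent $G$. For what it is worth, the cited proof in \cite{a} does not attempt the divisor-by-divisor comparison at all: it is a group-theoretic induction on $|G|$ that locates an element of large order, deduces a normal cyclic Sylow subgroup for the largest prime divisor, and compares $G$ with a product of smaller groups --- essentially the template the present paper reuses (via Lemmas \ref{lem:cynormsylnotce}--\ref{lem:cenotphi>n/q}) to prove its own main theorem. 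If you want to pursue your route, the missing ingredient is a proof (or refutation) of $M_d(G)\le M_d(\mathbb{Z}_n)$ for composite $d$; that statement should be isolated and either established with tools beyond Frobenius or abandoned in favor of the inductive strategy of \cite{a}.
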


Theorem \ref{cor:maximume} is a straightforward consequence of
(\ref{eq:dEGsize}) and Lemma \ref{lem:PGcyclicprimepower2}.
Our next goal is to improve the bound
        ${\sum_{d|n} \phi(d)d > {n^2}/{p}}$
given in  \cite[Lemma D]{a}.

\begin{thm}
\label{thm:phiddineq}
 \footnote{The published version of Theorem \ref{thm:phiddineq}, did not note the forbidden cases.  This was our mistake. The corrected proof is a careful reworking of the original. Only (\ref{eq:betterbound}) was required in the sequel, so this error had no impact upon the sequel.}
With Notation \ref{nta:nfactored}, if $n>1$ does not have one of the following  prime factorizations
       $2^a$,
       $2^a 3^b$,
       $2^a 3^b 5^c$, or
       $2^a 3^b 5^c7^d$ for positive $a$, $b$, $c$, $d$,
 then
\begin{equation}
\label{eq:betterboundodd}
\sum_{d|n} \phi(d)d
   \geq \left( \prod_{h=1}^{k} \frac{p_h+1}{p_h}\right)\cdot \frac{n^2}{p}.
\end{equation}
If $n$ is not a power of 2, then
\begin{equation}
\label{eq:betterbound}
\sum_{d|n} \phi(d)d  \geq \left( \frac{q+1}{q}\right)\cdot \frac{n^2}{p}.
\end{equation}
\end{thm}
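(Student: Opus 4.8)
The plan is to work entirely from the exact product formula
$\sum_{d\mid n}\phi(d)d=\prod_{h=1}^k\frac{p_h^{2\alpha_h+1}+1}{p_h+1}$ of Lemma~\ref{lem:sumZn}, to rewrite each desired inequality as a product of prime-by-prime factors, and to reduce both statements to an elementary minimization over the \emph{set} of prime divisors of $n$. I would first record the only input needed about the local factors. Writing $S(p,\alpha)=\frac{p^{2\alpha+1}+1}{p+1}$, a one-line computation gives
\[
\frac{S(p,\alpha)}{(p+1)p^{2\alpha-1}}=\frac{p^2}{(p+1)^2}+\frac{1}{(p+1)^2p^{2\alpha-1}}>\frac{p^2}{(p+1)^2},
\qquad
\frac{S(p,\alpha)}{p^{2\alpha}}=\frac{p}{p+1}+\frac{1}{(p+1)p^{2\alpha}}>\frac{p}{p+1}.
\]
The positive correction terms are exactly what make the inequalities strict for finite $n$, so I may safely discard them and argue with the limiting values $p^2/(p+1)^2$ and $p/(p+1)$.

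Next comes the reduction. For (\ref{eq:betterboundodd}) I divide both sides by the right-hand side and distribute the factor $\prod(p_h+1)/p_h$ together with the factor $1/p$ across the $k$ local factors; this turns the inequality into $p\prod_{h=1}^k R_h\ge 1$, where $R_h=S(p_h,\alpha_h)/\bigl((p_h+1)p_h^{2\alpha_h-1}\bigr)$. Using $R_h>p_h^2/(p_h+1)^2$, it suffices to show that the purely prime-set quantity
\[
P(S)=p\prod_{p_h\in S}\Bigl(\frac{p_h}{p_h+1}\Bigr)^2
\]
satisfies $P(S)\ge 1$ whenever $S$ is not one of $\{2\},\{2,3\},\{2,3,5\},\{2,3,5,7\}$. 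For (\ref{eq:betterbound}) the same manoeuvre, now placing only a single factor $(p_h+1)/p_h$ on the smallest prime $q$, reduces (when $k\ge 2$) to $L(S)=\frac{q^2}{(q+1)^2}\cdot\frac{p^2}{p+1}\cdot\prod_{q<p_h<p}\frac{p_h}{p_h+1}\ge 1$; the remaining case $k=1$ (an odd prime power) I dispatch directly, since $p^{2\alpha+1}+1\ge(p+1)^2p^{2\alpha-2}$ is equivalent to $p^3-p^2-2p-1\ge 0$, which holds for every odd prime $p$.

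The heart of the argument, and the main obstacle, is the combinatorial minimization of $P(S)$ and $L(S)$, which I would handle with two monotonicity observations. First, for a fixed largest prime, deleting any smaller prime from $S$ multiplies $P(S)$ by $(p_h+1)^2/p_h^2>1$, so among all prime sets with a prescribed maximum the full initial segment $\{2,3,\dots,p\}$ minimizes $P$. Second, passing from the initial segment with top prime $p$ to the one with the next prime $p'$ multiplies $P$ by $\frac{p'}{p}\bigl(\frac{p'}{p'+1}\bigr)^2$, which is $\ge 1$ exactly when $p'^3\ge p(p'+1)^2$; for consecutive odd primes $p'\ge p+2$ this is immediate from $(p'-2)(p'+1)^2=p'^3-3p'-2<p'^3$, so the only decreasing step is $2\mapsto 3$. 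Consequently the initial-segment values of $P$ are unimodal with minimum at $\{2,3\}$, lie below $1$ precisely for top prime $p\in\{2,3,5,7\}$, and exceed $1$ from $\{2,3,5,7,11\}$ onward. Combining the two observations with a finite check of the subsets of $\{2,3,5,7\}$ isolates exactly the four forbidden shapes, proving (\ref{eq:betterboundodd}); applying the same two facts to $L(S)$ gives $L(S)\ge 1$ with the minimum $1$ approached only at $\{2,3\}$, which is why (\ref{eq:betterbound}) must forbid only the powers of $2$. The genuinely delicate point is guaranteeing that the initial-segment products stay above $1$ for all large top primes despite irregular prime gaps; the elementary inequality $p'^3\ge p(p'+1)^2$ for consecutive odd primes is precisely what lets the whole minimization go through without any appeal to analytic estimates on primes.
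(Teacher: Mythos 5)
Your argument is correct and follows essentially the same route as the paper: both reduce, via the product formula of Lemma~\ref{lem:sumZn}, to the exponent-free quantity $p\prod_{h}p_h^2/(p_h+1)^2\geq 1$ (the paper's $T(n)$, your $P(S)$), and both rest on the same prime-gap inequality $p(p'+1)^2\leq (p'-2)(p'+1)^2<p'^3$ together with a handful of explicit rational evaluations. The differences are purely organizational --- you run the minimization over prime sets via two monotonicity lemmas plus a finite check of subsets of $\{2,3,5,7\}$, where the paper phrases it as an induction peeling off the largest prime with explicit base cases, and for (\ref{eq:betterbound}) you minimize your $L(S)$ (the paper's $U(n)$) over all admissible prime sets directly rather than deducing it from (\ref{eq:betterboundodd}) outside the forbidden cases --- but the mathematical content is identical.
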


\begin{proof}
We prove (\ref{eq:betterboundodd}) by induction on the number $k$ of distinct prime divisors of $n$.
We launch the induction by showing that (\ref{eq:betterboundodd}) holds for the  ``first'' sets of prime factors that are not forbidden.
Abbreviate
\[ S(n) = \sum_{d|n} \phi(d)d, \quad 
   R(n) =  \left( \prod_{h=1}^{k} \frac{p_h+1}{p_h}\right)\cdot \frac{n^2}{p}; \quad
\hbox{ so}\qquad  
    \frac{S(n)}{R(n)} =     p\cdot  
    \prod_{h=1}^{k}\frac{\displaystyle{p_h^2+\frac{1}{p_h^{2\alpha_h-1}}}}
                                   {(p_h+1)^2}.
   \]
Note that (\ref{eq:betterboundodd}) holds when $S(n)/R(n)\geq 1$.  The term $1/p_h^{2\alpha_h-1}$ is a decreasing function of $\alpha_h$ and $\alpha_h$ can be arbitrarily large.  Thus to verify (\ref{eq:betterboundodd}), it suffices to show that 
\[ T(n):= p\cdot  
    \prod_{h=1}^{k}\frac{\displaystyle{p_h^2}}
                                   {(p_h+1)^2} \geq 1.
\]
Note that $T(n)$ does not depend upon the exponents.     We readily compute that
$T(3^{\alpha_2}) =  27/16$, 
$T(2^{\alpha_1} 5^{\alpha_2})=5/4$, 
$T(2^{\alpha_1}3^{\alpha_2}7^{\alpha_3})=343/256$,
$T(2^{\alpha_1}3^{\alpha_2}5^{\alpha_3}11^{\alpha_4})=33275 / 20736$,
$T(2^{\alpha_1}3^{\alpha_2}5^{\alpha_3}7^{\alpha_4} 11^{\alpha_5})=1630475/1327104$.  This establishes the initial step for the induction on $k$ for $n$ not of a forbidden form. 

We now proceed to the inductive step. Suppose 
   $n=p_1^{\alpha_1} p_2^{\alpha_2} \cdots p_k^{\alpha_k}$
with $k\geq 2$, and let 
   $n'=p_1^{\alpha_1} p_2^{\alpha_2} \cdots p_{k-1}^{\alpha_{k-1}}$.
 Assume $n'$ is not one of the forbidden forms, so  $p_k\geq p_{k-1}+2$
 and $S(n')\geq R(n')$ by induction.
 Observe that by Corollary \ref{cor:peelsumphiordZn}
\[ S(n) = S(n')\cdot \frac{p_k^{2\alpha_{k}+1}+1}{p_k + 1}, \qquad
   R(n) = R(n') \cdot p_{k-1}\cdot \frac{p_k+1}{p_k}\cdot \frac{p_k^{2\alpha_k}}{p_k}.\]
Since $p_{k-1}\leq p_k-2$, we compute   
\[\frac{S(n)}{R(n)} = \frac{S(n')}{R(n')}\left(\frac{p^2(p+p^{-2\alpha_k})}{(p+1)^2p_{k-1}}\right) \geq
             \frac{S(n')}{R(n')}\left(\frac{p^3+p^{-2\alpha_k+2}}{p^3-3p-2 }\right) \geq 1.
 \]            
Thus (\ref{eq:betterboundodd}) holds by induction.

Except in the forbidden cases this implies (\ref{eq:betterbound}) since each factor $(p_h+1)/p_h>1$. Thus, since $n$ is not a power of 2, we need only consider three cases for  (\ref{eq:betterbound}). 
Abbreviate
\[ Q(n) = \frac{q+1}{q}\frac{n^2}{p}, \quad  U(n) =  \frac{p q}{q+1}\prod_{h=1}^{k}\frac{\displaystyle{p_h}}
                                   {p_h+1}; \quad\hbox{ so }\quad 
   \frac{S(n)}{Q(n)} = \frac{p q}{q+1}\prod_{h=1}^{k}\frac{\displaystyle{p_h+\frac{1}{p_h^{2\alpha_h}}}}
                                   {p_h+1} \geq U(n).
\]
To prove (\ref{eq:betterbound}) , it suffice to show that $U(n)\geq 1$ for forbidden $n$ which are not powers of 2.
One computes
$U(2^{\alpha_1} 3^{\alpha_2})=1$, 
$U(2^{\alpha_1}3^{\alpha_2}5^{\alpha_3})=25/18$,
$U(2^{\alpha_1}3^{\alpha_2}5^{\alpha_3}7^{\alpha_4})=245/144$.
\end{proof}

A hybrid of $T(n)$ and $S(n)/R(n)$ allows one to verify (\ref{eq:betterboundodd}) for families with arbitrary powers, yielding
$\frac{S}{R}(2) = 1$, 
$\frac{S}{R}(2\cdot 3 \cdot 5^{\alpha_3})  \geq  875/864$,
 $\frac{S}{R}(2^ 2 \cdot3\cdot  5\cdot 7^{\alpha_4})  \geq 184877/184320$,
$\frac{S}{R}(2\cdot 3^{\alpha_2} \cdot 5^{\alpha_3}\cdot 7^{\alpha_4})     \geq 1225/1152$.
Thus (\ref{eq:betterboundodd}) holds for these arguments for all positive integers $\alpha_2$, $\alpha_3$, $\alpha_4$. Since $S(n)/R(n)$ is decreasing in the $\alpha_h$,   (\ref{eq:betterboundodd}) can be shown to fail for some $n$ by showing $S(n')/R(n')<1$ for some  $n'$ with the same distinct prime divisors but possibly smaller, positive exponents.   We compute 
$\frac{S}{R}(2^2) = 11/12$, 
$\frac{S}{R}(2\cdot 3) =  7/8$,
$\frac{S}{R}(2^2\cdot 3 \cdot 5) = 539/576$,
$\frac{S}{R}( 2\cdot 3^2 \cdot 5) = 427/432$,
$\frac{S}{R}(2^3\cdot 3 \cdot 5\cdot 7) = 90601/92160$,
$\frac{S}{R}(2^ 2 \cdot3^2\cdot  5\cdot 7) = 201971/207360$,
$\frac{S}{R}(2^ 2 \cdot3\cdot  5^2\cdot 7) = 1725031/1728000$.
Thus (\ref{eq:betterboundodd}) fails for numbers with forbidden prime factorizations other than those noted above.
%\[
%\begin{array}{l|l||l|l||l|l}
%n & S(n)/R(n) & n & S(n)/R(n)& n & S(n)/R(n) \\
%\hline
%2 & 1   \checkmark          & 2^2 & 11/12 &   2\cdot 3 & 7/8 \\
%2\cdot 3 \cdot 5^\alpha & \geq  875/864\checkmark & 
%    2^2\cdot 3 \cdot 5 & 539/576 & 
%        2\cdot 3^2 \cdot 5 &427/432\\
%% 2^ 2 \cdot3\cdot  5\cdot 7&         23177/23040 \checkmark
%2^ 2 \cdot3\cdot  5\cdot 7^\alpha&\geq 184877/184320\checkmark &
%     2^3\cdot 3 \cdot 5\cdot 7& 
%       90601/92160 &  %\\
% %   
%%2^ 2 \cdot3\cdot  5\cdot 7^\alpha&\geq 184877/184320\checkmark & 
%       2^ 2 \cdot3^2\cdot  5\cdot 7&201971/207360\\ %&
%         2^ 2 \cdot3\cdot  5^2\cdot 7&1725031/1728000 &%\\
% 2\cdot 3^{\alpha_2} \cdot 5^{\alpha_3}\cdot 7^{\alpha_4} &     \geq 1225/1152\checkmark&
%      
%\end{array}
%\]

\begin{lem}
With  Notation \ref{nta:nfactored}, pick $z \in\mathbb{Z}_{n}$, and write $e=\ord(z)$. Then
\[
\deg(z) \geq
           e  -1    +\phi(e) (\frac{n}{e}-1),
\]
with equality if and only if $e$ and $n/e$ are coprime.
\end{lem}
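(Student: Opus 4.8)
The plan is to start from the exact formula for $\deg(z)$ already furnished by the preceding lemma, namely the second equality in (\ref{eq:degz-Zn}),
\[
\deg(z) = e - \phi(e) - 1 + \phi(e)\sum_{d \mid n/e} \phi(d)\frac{\gcd(d,e)}{\phi(\gcd(d,e))},
\]
and to compare it directly against the claimed lower bound. Subtracting $e-1$ from both sides and dividing through by the positive quantity $\phi(e)$, the desired inequality $\deg(z) \geq e - 1 + \phi(e)(n/e - 1)$ reduces to the cleaner statement
\[
\sum_{d \mid n/e} \phi(d)\frac{\gcd(d,e)}{\phi(\gcd(d,e))} \geq \frac{n}{e},
\]
and the equality case of the lemma corresponds exactly to the equality case of this reduced inequality.

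The two ingredients I would invoke are both elementary. First, the classical identity $\sum_{d \mid m} \phi(d) = m$, applied with $m = n/e$, gives $\sum_{d \mid n/e} \phi(d) = n/e$, which is precisely the right-hand side above. Second, for every positive integer $k$ one has $\phi(k) \leq k$, hence $\gcd(d,e)/\phi(\gcd(d,e)) \geq 1$, with equality if and only if $\gcd(d,e) = 1$. Since multiplying each nonnegative summand $\phi(d)$ by a factor that is at least $1$ can only increase the sum, the displayed inequality follows by a term-by-term comparison against $\sum_{d \mid n/e} \phi(d) = n/e$.

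For the equality statement, note that each $\phi(d) > 0$, so equality in the term-by-term comparison forces $\gcd(d,e)/\phi(\gcd(d,e)) = 1$, i.e.\ $\gcd(d,e) = 1$, for \emph{every} divisor $d$ of $n/e$. Taking $d = n/e$ shows that $\gcd(n/e, e) = 1$ is necessary; conversely, if $\gcd(n/e, e) = 1$ then every divisor $d$ of $n/e$ is itself coprime to $e$, so every factor equals $1$ and equality holds throughout. Thus equality is equivalent to $e$ and $n/e$ being coprime, as claimed. I do not anticipate a genuine obstacle here: the argument is a one-line algebraic reduction followed by a term-by-term estimate, and the only point requiring mild care is confirming that the equality condition propagates from the single divisor $d = n/e$ to all divisors of $n/e$, which is immediate since any divisor of a number coprime to $e$ is itself coprime to $e$.
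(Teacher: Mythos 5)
Your proof is correct and follows essentially the same route as the paper: both start from the second expression for $\deg(z)$ in (\ref{eq:degz-Zn}), observe that each factor $\gcd(d,e)/\phi(\gcd(d,e))$ is at least $1$ with equality exactly when $\gcd(d,e)=1$, and compare term by term against $\sum_{d\mid n/e}\phi(d)=n/e$. Your write-up is somewhat more explicit than the paper's (which leaves the identity $\sum_{d\mid m}\phi(d)=m$ and the propagation of the equality condition implicit), but there is no substantive difference.
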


\begin{proof}
Consider (\ref{eq:degz-Zn}). The term
        ${\gcd(d,e)}/{\phi(\gcd(d,e))}$
is at least 1 with equality if and only if  $\gcd(d,e)=1$.  If $n/e$ has any divisors not coprime to $e$, then the inequality must be strict.
\end{proof}

\begin{lem}
With Notation \ref{nta:nfactored},
\begin{equation}
\label{eq:phigeqn/p}
 {\phi(n)} \geq  \frac{n}{p},
\end{equation}
with equality if and only if $n=2^\alpha 3^\beta$
   with $\alpha \geq 1$ and $\beta\geq 0$.
\end{lem}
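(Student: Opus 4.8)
The plan is to reduce the inequality to a statement about the \emph{set} of prime divisors of $n$ alone, in which the exponents play no role. Using the product formula (\ref{eq:phi(n)primes}) one has $\phi(n)=\prod_{h=1}^{k}p_h^{\alpha_h-1}(p_h-1)$ while $n=\prod_{h=1}^{k}p_h^{\alpha_h}$, so after cancelling the common factor $\prod_{h=1}^{k}p_h^{\alpha_h-1}$ the claimed bound $\phi(n)\geq n/p=n/p_k$ becomes equivalent to
\[
 \prod_{h=1}^{k}(p_h-1)\ \geq\ \prod_{h=1}^{k-1}p_h .
\]
Notice that the exponents $\alpha_h$ have vanished entirely; this already explains why the equality condition ought to depend only on which primes occur (and on whether $2$ is among them), not on their multiplicities.

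Next I would prove this prime inequality by a simple shift comparison. Since $p_1<p_2<\cdots<p_k$ are integers, consecutive ones satisfy $p_h-1\geq p_{h-1}$ for $2\leq h\leq k$. Multiplying these gives $\prod_{h=2}^{k}(p_h-1)\geq\prod_{h=2}^{k}p_{h-1}=\prod_{h=1}^{k-1}p_h$, and then multiplying by the remaining factor $p_1-1\geq 1$ yields $\prod_{h=1}^{k}(p_h-1)\geq\prod_{h=1}^{k-1}p_h$, as desired; the case $k=1$ is just the bare inequality $p_1-1\geq 1$, with the right-hand product empty.

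For the equality characterization I would track when each of the inequalities above is tight, and here lies the only delicate point. Equality forces $p_1-1=1$, i.e. $p_1=2$, and forces $p_h-1=p_{h-1}$, i.e. $p_h=p_{h-1}+1$, for every $h\geq 2$. The only pair of consecutive integers that are both prime is $(2,3)$, so $p_2=3$ and no further prime can occur (as $3+1=4$ is composite); hence $k\leq 2$ and the prime set lies in $\{2,3\}$ with $2\mid n$. This is exactly the family $n=2^{\alpha}3^{\beta}$ with $\alpha\geq 1$, $\beta\geq 0$, the case $k=1$ giving $n=2^{\alpha}$ and the case $k=2$ giving $n=2^{\alpha}3^{\beta}$ with $\beta\geq 1$; conversely, for such $n$ one verifies directly that both inequalities are equalities. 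The main obstacle is thus not the inequality itself but this equality analysis, where I must be careful to exclude $k\geq 3$ and to handle the degenerate empty-product case $k=1$ so that $n=2^{\alpha}$ is correctly included.
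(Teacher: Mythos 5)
Your proof is correct and follows essentially the same route as the paper: both reduce via the product formula (\ref{eq:phi(n)primes}) to the shift inequality $p_h-1\geq p_{h-1}$, telescope, and trace equality back to $p_1=2$ together with the fact that $2,3$ is the only pair of consecutive integers that are both prime. The only cosmetic difference is that the paper treats the prime-power case $k=1$ separately, whereas you handle it uniformly via the empty product.
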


\begin{proof}
If $n=p^\alpha$, then
$\phi(p^\alpha)=p^{\alpha-1}(p-1) \geq p^{\alpha-1} =  p^\alpha/p$.
The inequality holds in this case, with equality if and only if $p=2$.  Now assume that $n$ has $k\geq 2$ distinct prime factors.
By  (\ref{eq:phi(n)primes}),
    $\phi(n)/n = (p_1 -1) (p_2-1) \cdots (p_k-1)/(p_1 p_2 \cdots p_k)$.
Note that $p_{i+1} - 1\geq p_{i-1}$ $(1\leq i \leq k-1)$, with equality precisely when $k=2$, $p_1=2$, and $p_{2}=3$.  Telescoping the middle terms gives $\phi(n)/n\geq (p_1-1)/p_k = (q-1)/p\geq 1/p$, with equality under the stated conditions, i.e., $n=2^\alpha 3^\beta$ for positive $\alpha$ and nonnegative $\beta$.
\end{proof}

\begin{lem}
\label{lem:avez}
 With Notation \ref{nta:nfactored}, assume that $n$ is not equal to
$2^\alpha$ with  $\alpha\geq 1$.
Then \footnote{Fixed a sign error in first equation--only used in Lemma \ref{lem:lemma1}, but doesn't change that result.}
\begin{align*}
\sum_{z\in \mathbb{Z}_n} \deg(z)
         &\geq    2 \frac{n^2}{p} - \frac{n}{p}-1&
                    \hbox{if $\phi(n) \leq n/q$,} \\
\sum_{z\in \mathbb{Z}_n} \deg(z)
            &>(n-1)\left(\frac{n}{q}+1\right) &
                    \hbox{if $\phi(n)> n/q$.}
\end{align*}
\end{lem}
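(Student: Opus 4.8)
The plan is to reduce the whole statement to the single closed-form identity for the degree sum and then treat the two regimes by entirely different mechanisms. By the handshake lemma and (\ref{eq:EGsize}),
\[
 \sum_{z\in\mathbb{Z}_n}\deg(z) = 2|E(\mathbb{Z}_n)| = 2\sum_{z}\ord(z) - \sum_{z}\phi(\ord(z)) - n,
\]
so Lemma \ref{lem:sumZn} yields the exact identity $\sum_{z}\deg(z) = 2S(n) - P(n) - n$, where $S(n)=\sum_{d\mid n}\phi(d)d$ and $P(n)=\sum_{d\mid n}\phi(d)^2$. I would rewrite this as $\sum_{z}\deg(z) = S(n) - n + \sum_{d\mid n}\phi(d)\bigl(d-\phi(d)\bigr)$, and since every summand $\phi(d)(d-\phi(d))$ is nonnegative, keeping only the $d=n$ term gives the workhorse lower bound $\sum_{z}\deg(z)\ge S(n) - n + \phi(n)\bigl(n-\phi(n)\bigr)$.

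For the regime $\phi(n)>n/q$ I would argue directly by exhibiting high-degree vertices. Each of the $\phi(n)$ generators of $\mathbb{Z}_n$ is joined to every other element, hence has degree $n-1$, and the identity also has degree $n-1$; these $\phi(n)+1$ vertices are distinct and all remaining degrees are nonnegative. Therefore $\sum_{z}\deg(z)\ge(\phi(n)+1)(n-1) > (n/q+1)(n-1)$, which is exactly the required strict inequality. This disposes of the second case cleanly.

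For the regime $\phi(n)\le n/q$ I would first extract the structural consequences. Because $\phi(p^\alpha)/p^\alpha=(p-1)/p>1/p$ for odd $p$, every odd prime power falls in the first regime, and $n=2^\alpha$ is excluded by hypothesis; hence here $n$ has at least two distinct prime divisors, so $p>q$, $pq\mid n$, and $n\ge pq$. Next, combining $\phi(n)\ge n/p$ from (\ref{eq:phigeqn/p}) with $\phi(n)\le n/q\le n/2$ and the fact that $x\mapsto x(n-x)$ is increasing on $[0,n/2]$, I obtain $\phi(n)(n-\phi(n))\ge (n/p)(n-n/p)=(p-1)n^2/p^2$. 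Substituting this into the workhorse bound reduces the entire problem to a good lower bound on $S(n)$.

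The final step invokes Theorem \ref{thm:phiddineq}. When $n$ is not of a forbidden form, (\ref{eq:betterboundodd}) gives $S(n)\ge C\,n^2/p$ with $C=\prod_h (p_h+1)/p_h\ge (1+1/q)(1+1/p)$; plugging in, the coefficient of $n^2$ already attains the target value $2/p$ with surplus $[p(C-1)-1]/p^2\ge (p+1)/(qp^2)$, and the remaining linear shortfall is absorbed precisely because $n\ge pq$. The main obstacle is that the sharp product bound (\ref{eq:betterboundodd}) \emph{fails} exactly on the forbidden shapes $2^a3^b$, $2^a3^b5^c$, $2^a3^b5^c7^d$ — and these, being even, all lie in this very regime, with $n=6$ already witnessing that the weaker bound (\ref{eq:betterbound}) does not suffice. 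For these finitely many prime-shapes I would fall back on the exact multiplicative formulas of Lemma \ref{lem:sumZn} for $S(n)$ and $P(n)$, verifying $2S(n)-P(n)-n\ge 2n^2/p-n/p-1$ by treating the exponents as free parameters and reducing, exactly as in the proof of Theorem \ref{thm:phiddineq}, to an exponent-free sufficient inequality that is checked on a handful of base shapes.
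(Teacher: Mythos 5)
Your proposal is correct, and the second regime ($\phi(n)>n/q$) is handled exactly as in the paper: generators plus identity contribute $(\phi(n)+1)(n-1)>(n/q+1)(n-1)$. For the regime $\phi(n)\le n/q$, however, you take a genuinely different route. The paper stays at the level of individual vertices: from the degree formula (\ref{eq:degz-Zn}) it extracts the $d=n/e$ summand, which is $\phi(n)\ge n/p$ by (\ref{eq:phigeqn/p}), to get $\deg(z)\ge \ord(z)-1+n/p$ for each of the $n-\phi(n)-1$ nonidentity nongenerators; summing and then applying the \emph{unrestricted} bound (\ref{eq:betterbound}), with the hypothesis $\phi(n)\le n/q$ used only to control the correction term $(\phi(n)+1)n/p$, yields $2n^2/p-n/p-1$ with no case analysis. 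You instead work with the global identity $\sum_z\deg(z)=2S(n)-P(n)-n=S(n)-n+\sum_{d\mid n}\phi(d)(d-\phi(d))$, keep only the top divisor to get the extra term $\phi(n)(n-\phi(n))\ge (p-1)n^2/p^2$, and then need the \emph{sharper but restricted} bound (\ref{eq:betterboundodd}); your surplus and absorption computations ($p(C-1)-1\ge (p+1)/q$ and $n\ge pq$) are correct. The price is that the forbidden shapes $2^a3^b$, $2^a3^b5^c$, $2^a3^b5^c7^d$ all land in this regime (they are even, so $\phi(n)\le n/2=n/q$), and for them your argument is only a sketch. That sketch is completable — e.g.\ for $n=2^a3^b$ the closed forms of Lemma \ref{lem:sumZn} with $x=4^a$, $y=9^b$ reduce the claim to $xy+x+y+5\ge 4\sqrt{xy}$, which holds since $xy\ge 36$; for the shapes with largest prime $5$ or $7$ one even has $\prod_h p_h/(p_h+1)>2/p$, so $S(n)$ alone essentially suffices — but this finite verification is exactly the work the paper's choice of (\ref{eq:betterbound}) over (\ref{eq:betterboundodd}) was designed to avoid. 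In short: your decomposition is more transparent about where the slack lives (a single divisor term versus a pointwise in-degree bound), while the paper's is uniform and shorter; to submit yours you would need to write out the forbidden-shape computations explicitly.
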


\begin{proof}
For any   prime $p$ (other than 2), both
      $\phi(p^\alpha)=p^{\alpha-1}(p-1)>p^\alpha/p$
by (\ref{eq:phi(n)primes}) and
   $|E(\mathbb{Z}_{p^\alpha})|=(p^\alpha-1)(p^\alpha-1)/2 >p^\alpha/p$
by Theorem \ref{lem:PGcyclicprimepower1}.  Thus we may assume that
$n$ is not a prime power.

Each of the $\phi(n)$-many generators of  $\mathbb{Z}_n$ has degree $n-1$, as does the identity.  So summing over the generators and the identity gives
\[ \sum_{z} \deg(z)
         = \phi(n) (n-1) + n -1 =  (\phi(n)+1) (n-1).\]
If $\phi(n) > n/q$, then
\[ \sum_{z\in \mathbb{Z}_n} \deg(z)
         > (n-1)\left(\frac{n}{q}+1\right)>\frac{n^{2}}{q}.\]

 By (\ref{eq:degz-Zn}),  $z \in \mathbb{Z}_{n}$ has degree
        $\deg(z) =
               \ord(z) -1-\phi( \ord(z) ) + \sum_{ d | n/\ord(z)} \phi(d\ord(z))$.
For each of the ($n-\phi(n)-1$)-many nonidentity nongenerators $z$ of $\mathbb{Z}_n$,  the summand corresponding to $d=1$ is $\phi( \ord(z) )$ and to
      $d=n/\ord(z)$ is $\phi(n)$.
By (\ref{eq:phigeqn/p}), $\phi(n)\geq n/p$.  Thus
         $\deg(z)\geq \ord(z) -1 +n/p$.
Equality holds  for all nonidentity nongenerators if and only if $n=6$.
Now\footnote{The sign error originated in the following line.}
\[ \sum_{z\in \mathbb{Z}_n} \deg(z)
         \geq (\sum_{z\in \mathbb{Z}_n}  \ord(z)) - n
                       +( n-\varphi(n)-1) \frac{n}{p} + n-1
          \geq (\sum_{z\in \mathbb{Z}_n}  \ord(z))
           + \frac{n^2}{p}
                        -(\phi(n)+1)\frac{n }{p} - 1. \]
If $\phi(n) \leq n/q$,  then by (\ref{eq:betterbound})
\[  \sum_{z\in \mathbb{Z}_n} \deg(z)
     \geq     \frac{q+1}{q}\cdot \frac{n^2}{p}
                          + \frac{n^2}{p} -(\frac{n}{q}+1)\frac{n }{p} - 1
      = 2 \frac{n^2}{p} -\frac{n}{p}-1.\]
\end{proof}

Recall that for any undirected graph $\Gamma$
\[  |E(\Gamma)|  = \frac{1}{2}\sum_{g\in \Gamma} \deg(g).\]

\begin{lem}
\label{lem:phi(n)ineq}
With Notation \ref{nta:nfactored}, suppose
$\phi(n) > n/q$. Then $n$ is odd.

\end{lem}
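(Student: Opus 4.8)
The plan is to prove the contrapositive: I will show that if $n$ is even, then $\phi(n) \leq n/q$, which contradicts the hypothesis $\phi(n) > n/q$ and forces $n$ to be odd. The only input needed is the product formula for the totient.

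First I would observe that if $n$ is even, then its least prime divisor is $q = p_1 = 2$, so that $n/q = n/2$. Thus the claim to establish reduces to the bound $\phi(n) \leq n/2$ for every even $n$.

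Next I would invoke the factorization (\ref{eq:phi(n)primes}), rewritten as the ratio
\[
  \frac{\phi(n)}{n} = \prod_{h=1}^{k} \frac{p_h-1}{p_h}.
\]
Since $p_1 = 2$, the first factor of this product equals $1/2$, while every remaining factor $(p_h-1)/p_h$ with $h \geq 2$ is at most $1$. Hence $\phi(n)/n \leq 1/2$, i.e. $\phi(n) \leq n/2 = n/q$. This contradicts $\phi(n) > n/q$, so $n$ cannot be even.

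I do not anticipate any real obstacle here, as the argument is a one-line consequence of the multiplicative structure of $\phi$; the only point requiring a moment's care is the elementary observation that an even $n$ forces $q = 2$ (rather than some larger prime), so that the factor $1/2$ is exactly the one pulled out of the product.
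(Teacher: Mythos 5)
Your proof is correct and follows essentially the same route as the paper: both arguments rest on the factorization $\phi(n)/n=\prod_{h}(p_h-1)/p_h$ from (\ref{eq:phi(n)primes}) and the observation that an even $n$ forces the factor $(p_1-1)/p_1=1/2$, making $\phi(n)\leq n/2=n/q$ and contradicting the hypothesis. The paper phrases this as the impossibility of $(p_2-1)\cdots(p_k-1)>p_2\cdots p_k$, but that is the same estimate in multiplied-out form.
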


\begin{proof} In light of (\ref{eq:phi(n)primes}),
    $\phi(n) > n/q$
if and only if
    $(p_{1} - 1) \cdots (p_{k} - 1) > p_{2} \cdots p_{k}$.
Assume $q=p_1 = 2$, so $p_{1} - 1 = 1$.  Then
the above inequality gives
    $(p_{2} - 1) \cdots (p_{k} - 1) > p_{2}\cdots p_{k}$,
which is impossible. Therefore $n$ is odd.
\end{proof}

\begin{lem}
\label{lem:d(r)}
Assume that $r$ and $s$ are natural numbers such
that  $s | r$. Then $2r - \phi(r) - 1 \geq 2s - \phi(s) - 1$, with equality if and only if $s=r$.
\end{lem}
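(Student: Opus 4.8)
The plan is to recast the statement as a monotonicity property of a single arithmetic function and then verify it by a short, direct estimate. Set $f(n) = 2n - \phi(n) - 1$; the constant $-1$ plays no role, so the claim is equivalent to $f(r) \geq f(s)$ whenever $s \mid r$, with equality exactly when $s = r$. Subtracting, this amounts to showing
\[
2(r-s) \geq \phi(r) - \phi(s),
\]
with the inequality strict when $r \neq s$. I would isolate this rearranged form first, because it separates the ``size'' contribution $2(r-s)$ from the totient contribution cleanly.

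Next I would write $r = sm$ with $m \geq 1$ a natural number, using $s \mid r$. The case $m = 1$ gives $r = s$ and equality, disposing of one half of the biconditional immediately. For $m \geq 2$ I would argue strictness by a three-link chain of inequalities. Since $r = sm \geq 2s$, we get $2(r-s) = 2r - 2s \geq r$. Since $r \geq 2$, the standard bound $\phi(r) \leq r-1 < r$ applies. Finally $\phi(s) \geq 1 > 0$, so $\phi(r) - \phi(s) < \phi(r)$. Stringing these together yields
\[
\phi(r) - \phi(s) < \phi(r) < r \leq 2(r-s),
\]
which is the desired strict inequality; this also closes the ``only if'' direction of the equality claim, since $r \neq s$ forces $m \geq 2$ and hence strictness.

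The main thing to watch, rather than any genuine obstacle, is the equality characterization: I must ensure the inequality is strict for every proper divisor, which is exactly what $\phi(r) < r$ for $r \geq 2$ guarantees, so no borderline case slips through. As a sanity cross-check I would also note the alternative route of reducing to the case where $r/s$ is prime (splitting on whether that prime already divides $s$) and chaining prime-by-prime along a divisor chain from $s$ to $r$; each such step is strict, so any nontrivial chain is strict. I expect the direct estimate above to be the cleanest to present, with the prime-step reduction mentioned only if a reviewer prefers an inductive framing.
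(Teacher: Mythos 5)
Your proof is correct, and it takes a genuinely different route from the paper's. The paper writes $r=st$ and invokes the submultiplicative bound $\phi(st)\leq \phi(s)t$, then factors: $2r-\phi(r)-1\geq t\bigl(2s-\phi(s)\bigr)-1\geq 2s-\phi(s)-1$, with the last step strict when $t>1$ because $2s-\phi(s)>0$. You instead avoid any multiplicative property of $\phi$ entirely: after rearranging to $2(r-s)\geq \phi(r)-\phi(s)$ you use only the trivial facts $\phi(r)<r$ for $r\geq 2$, $\phi(s)\geq 1$, and $r\geq 2s$ when $s$ is a proper divisor, chaining $\phi(r)-\phi(s)<\phi(r)<r\leq 2(r-s)$. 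Both arguments are short and both handle the equality case cleanly (yours by observing that $m\geq 2$ forces every link to be available, the paper's by observing that $t>1$ makes the factored inequality strict). The paper's version is marginally sharper in spirit --- it tracks how $\phi$ behaves under the divisor relation and would adapt to variants where the coefficient $2$ is replaced by any $c\geq 1$ --- whereas yours leans on the specific coefficient $2$ via $r\geq 2s$; in exchange, yours needs nothing beyond the crudest bounds on the totient, which is arguably preferable for a lemma this small.
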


\begin{proof}
Write  $r= st$, and observe that that $\phi(st) \leq \phi(s)t$.  Then
\[
2 r - \phi(r) - 1
     \geq 2st - \phi(s)t -1
        = t(2s - \phi(s)) - 1
         \geq 2s - \phi(s) - 1.
\]
Observe that if $t>1$, then this inequality is strict.
\end{proof}

\section{Direct products}

Let $G$ be a finite group.  Suppose $G=U\times V$ is the direct product of normal subgroups $U$ and $V$.  We view the underlying set of $G$ as the cartesian product  $U\times V =\{uv\,|\, u\in U, v\in V\}$, where we use juxtaposition (and suggestive notation) to avoid confusion with directed edges in a power graph.  We use $\cdot$ for the usual the group product on $U\times V$, namely $uv\cdot u'v' = (uu')(vv')$.
We write $g\rightarrow h$
when
    $(g,h)\in \overrightarrow{E}(G)$,
that is, when $h=g^a$ but $h\not=g$ for some $a$.

\begin{lem}
\label{lem:gendirectprod}
Let $G$ be a finite group, and let $U$ and $V$ be subgroups of $G$.
Define
\begin{eqnarray*}
 D &=&
\begin{array}{l}
 \{(uv, u'v')\,|\, uu'\in \overrightarrow{E}(U), vv'\in \overrightarrow{E}(V)\}\\
 \phantom{X}\cup
     \{(uv, u'v)\,|\, uu'\in \overrightarrow{E}(U), v\in V\}
        \cup
     \{(uv, uv')\,|\, u\in U,  vv'\in \overrightarrow{E}(V)\},
\end{array}\\
B &=& \begin{array}{l}
 \{\{uv, u'v'\}\,|\, uu' \in \overleftrightarrow{E}(U), vv' \in \overleftrightarrow{E}(V)\}\\
\phantom{X}\cup
     \{\{uv, u'v\}\,|\, uu' \in \overleftrightarrow{E}(U), v\in V\}
       \cup
     \{\{uv, uv'\}\,|\, u\in U,  vv' \in \overleftrightarrow{E}(V)\}.
\end{array}
\end{eqnarray*}
Suppose $G$ is the direct product $U\times V$ of $U$ and $V$. Then
\begin{eqnarray}
\label{eq:dEuvin}
\overrightarrow{E}(U\times V) &\subseteq& D,
\\
\label{eq:bdEuvin}
 \overleftrightarrow{E}(U\times V) &\subseteq& B.
\end{eqnarray}
\end{lem}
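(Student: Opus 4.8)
The plan is to exploit the basic feature of an internal direct product of normal subgroups: elements of $U$ and $V$ commute, so that powers act coordinatewise, $(uv)^a = u^a v^a$ for all $u\in U$, $v\in V$, and all integers $a$. Granting this, each of the two containments reduces to a short three-way case analysis.

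For (\ref{eq:dEuvin}), I would start from a directed edge $(uv,u'v')\in\overrightarrow{E}(U\times V)$, so that $u'v' = (uv)^a$ for some positive $a$ with $u'v'\neq uv$. Comparing coordinates in $(uv)^a = u^a v^a$ gives $u'=u^a$ and $v'=v^a$. I would then split on which coordinates move. If $u'\neq u$, then $u' = u^a\in\langle u\rangle-\{u\}$, so $(u,u')\in\overrightarrow{E}(U)$, and symmetrically $v'\neq v$ yields $(v,v')\in\overrightarrow{E}(V)$; here I would note that $\langle u\rangle$ is the same whether computed in $U$ or in $G$, since $U$ is a subgroup. The three live possibilities, namely both coordinates moving, only the $U$-coordinate moving, and only the $V$-coordinate moving, land $(uv,u'v')$ in the first, second, and third part of $D$ respectively, while the case $u'=u$ and $v'=v$ is impossible because $u'v'\neq uv$.

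For (\ref{eq:bdEuvin}), I would first invoke Lemma \ref{lem:bidi=gensame} to restate $\{uv,u'v'\}\in\overleftrightarrow{E}(U\times V)$ as $\langle uv\rangle = \langle u'v'\rangle$ with $uv\neq u'v'$. Applying the coordinate projection $\pi_U\colon U\times V\to U$, which is a homomorphism, to this equality of subgroups gives $\langle u\rangle = \pi_U(\langle uv\rangle)=\pi_U(\langle u'v'\rangle)=\langle u'\rangle$, and likewise $\langle v\rangle = \langle v'\rangle$. The same three-way split as before, now using Lemma \ref{lem:bidi=gensame} in the reverse direction to convert $\langle u\rangle=\langle u'\rangle$ with $u\neq u'$ into $\{u,u'\}\in\overleftrightarrow{E}(U)$ (and similarly for $V$), places $\{uv,u'v'\}$ into the matching part of $B$, the all-equal case again being ruled out.

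The computation $(uv)^a = u^a v^a$ is the crux of both parts, and the only real obstacle is justifying it: for normal subgroups $U$ and $V$ with $U\cap V=\{e\}$ and $UV=G$, every $u\in U$ commutes with every $v\in V$ (the commutator $uvu^{-1}v^{-1}$ lies in $U\cap V=\{e\}$ by normality), which is exactly the standard fact underlying the internal direct product. Once this is recorded, the remainder is bookkeeping, the only subtlety being the exclusion of the degenerate case in which neither coordinate moves.
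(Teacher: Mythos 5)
Your proposal is correct and follows essentially the same route as the paper: write $(uv)^{a}=u^{a}v^{a}$, compare coordinates using uniqueness of the decomposition, and split into the three nondegenerate cases (both coordinates move, only the $U$-coordinate, only the $V$-coordinate), with the all-fixed case excluded because the endpoints of an edge are distinct. The only cosmetic difference is in the bidirectional part, where you pass through Lemma \ref{lem:bidi=gensame} and the coordinate projections to get $\langle u\rangle=\langle u'\rangle$ and $\langle v\rangle=\langle v'\rangle$, whereas the paper simply runs the directed-edge argument in both directions; both are valid and equivalent.
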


\begin{proof}
Let $u$, $u'\in U$ and $v$, $v'\in V$.
First suppose
      $uv\rightarrow u'v'$ in $\overrightarrow{\pg}(U\times V)$, so
for some positive integer $c$,
      $u'v'=(uv)^{\cdot c} = u^c v^c$.
The product is direct, so
       $u'= u^c$ and $v'=v^c$.
Thus one of the following holds:
      (1d)    $u\rightarrow u'$ in $\overrightarrow{\pg}(U)$ and $v\rightarrow v'$ in $\overrightarrow{\pg}(V)$,
      (2d)   $u'=u$ and $v\rightarrow v'$ in $\overrightarrow{\pg}(V)$,
      (3d)  $u\rightarrow u'$ in $\overrightarrow{\pg}(U)$ and $v'=v$, or
      (4d)  $u'=u$ and  $v'=v$.
Thus (\ref{eq:dEuvin}) holds.
Now suppose
      $u'v'\leftrightarrow uv$ in $\overleftrightarrow{\pg}(U\times V)$.
As above, one of the following holds:
      (1u)    $u'\leftrightarrow u$ in $\overleftrightarrow{\pg}(U)$ and $v'\leftrightarrow v$ in $\overleftrightarrow{\pg}(V)$,
      (2u)   $u'=u$ and $v'\leftrightarrow v$ in $\overleftrightarrow{\pg}(V)$,
      (3u)  $u'\leftrightarrow u$ in $\overleftrightarrow{\pg}(U)$ and $v'=v$, or
      (4u)  $u'=u$ and  $v'=v$.
Thus (\ref{eq:bdEuvin}) holds.
\end{proof}

\begin{lem}
\label{lem:directprodeq} With reference to Lemma
\ref{lem:gendirectprod}, suppose $G=U\times V$.  Then the
following hold.
\begin{enumerate}

\item $\overrightarrow{E}(U\times V) = D$ if and only if $(|U|,|V|) = 1$.
\item If $(|U|,|V|) = 1$, then $\overleftrightarrow{E}(U\times V) =
B$.

\end{enumerate}
\end{lem}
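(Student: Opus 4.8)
The plan is to prove both statements by supplying the reverse inclusions to those already established in Lemma~\ref{lem:gendirectprod}, namely $\overrightarrow{E}(U\times V)\subseteq D$ and $\overleftrightarrow{E}(U\times V)\subseteq B$. The single engine behind both parts is the observation that a pair $(uv,u'v')$ lies in $D$ precisely when $u'=u^a$ and $v'=v^b$ for \emph{possibly different} exponents $a,b$, whereas $uv\to u'v'$ requires one \emph{common} exponent $c$ with $u^c=u'$ and $v^c=v'$. When $\gcd(|U|,|V|)=1$, every pair of element orders satisfies $\gcd(\ord(u),\ord(v))=1$ (since $\ord(u)\mid|U|$ and $\ord(v)\mid|V|$), so the Chinese Remainder Theorem produces such a common $c$ from any prescribed $a,b$; this is what forces the two descriptions to coincide. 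I will record once, as the key fact, that coprimality of $\ord(u)$ and $\ord(v)$ gives $\langle uv\rangle=\langle u\rangle\langle v\rangle$ and $\ord(uv)=\ord(u)\ord(v)$, so in particular both $u$ and $v$ are powers of $uv$.

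For the ``if'' direction of (i), assume $\gcd(|U|,|V|)=1$ and take any element of $D$. In the type-(1d) case $(uv,u'v')$ with $u\to u'$ and $v\to v'$, write $u'=u^a$, $v'=v^b$ and use the Chinese Remainder Theorem to pick $c$ with $c\equiv a\pmod{\ord(u)}$ and $c\equiv b\pmod{\ord(v)}$; then $(uv)^c=u'v'$, and $u'v'\neq uv$ because $u'\neq u$, so $uv\to u'v'$. The type-(2d) and type-(3d) cases are identical after taking the exponent $1$ in the unchanged coordinate so that it is reproduced. This yields $D\subseteq\overrightarrow{E}(U\times V)$, and with Lemma~\ref{lem:gendirectprod} gives equality.

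For the ``only if'' direction of (i) I argue by contraposition: if $\gcd(|U|,|V|)>1$, pick a common prime divisor $p$ and, by Cauchy's theorem, elements $u\in U$ and $v\in V$ each of order $p$. Then $(uv,v)$ lies in $D$ as a type-(2d) pair, taking $u'=e$ (so $u\to e$, valid since $\ord(u)=p>1$) and the same $v$. However $(uv)^c=v$ would force $u^c=e$ and $v^c=v$, i.e.\ $c\equiv0$ and $c\equiv1\pmod p$, which is impossible. Hence $(uv,v)\in D\setminus\overrightarrow{E}(U\times V)$ and the two sets differ. This single witness works uniformly for all $p$, including $p=2$, which I expect to be the one point needing care.

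For (ii), assume $\gcd(|U|,|V|)=1$. By Lemma~\ref{lem:bidi=gensame} it suffices to show that the two elements of each pair in $B$ generate the same cyclic subgroup. Using the recorded fact, $\langle uv\rangle=\langle u\rangle\langle v\rangle$ is determined solely by the subgroups $\langle u\rangle$ and $\langle v\rangle$. In a type-(1u) pair we have $\langle u\rangle=\langle u'\rangle$ and $\langle v\rangle=\langle v'\rangle$ (and $u\neq u'$, $v\neq v'$), whence $\langle uv\rangle=\langle u\rangle\langle v\rangle=\langle u'\rangle\langle v'\rangle=\langle u'v'\rangle$; since $\ord(u')=\ord(u)$ the needed coprimality persists for the primed element. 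The type-(2u) and type-(3u) pairs are the special cases $v'=v$ and $u'=u$. Thus $B\subseteq\overleftrightarrow{E}(U\times V)$, and with Lemma~\ref{lem:gendirectprod} equality follows. The only genuinely delicate point throughout is the reconciliation of ``separate exponents'' with ``a single common exponent,'' which the coprimality hypothesis and the Chinese Remainder Theorem resolve.
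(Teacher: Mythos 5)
Your proof is correct and follows essentially the same route as the paper: the reverse inclusion $D\subseteq\overrightarrow{E}(U\times V)$ via coprimality of orders (the paper leaves the Chinese Remainder Theorem step implicit, which you make explicit), and a witness pair built from elements of common prime order $p$ to show strictness when $\gcd(|U|,|V|)>1$ (you use $(uv,v)$ from the second subset of $D$ where the paper uses $(uv,ue)$ from the third --- a trivial symmetry). Your treatment of (ii) via Lemma~\ref{lem:bidi=gensame} and $\langle uv\rangle=\langle u\rangle\langle v\rangle$ is exactly the ``similar'' argument the paper alludes to.
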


\begin{proof}(i): Suppose that $\gcd(|U|, |V|) = 1$. Let $u \rightarrow u'$ in
$\overrightarrow{\pg}(U)$ and $v \rightarrow v'$ in
$\overrightarrow{\pg}(V)$. Then $uv \rightarrow u'v'$ in
$\overrightarrow{\pg}(U\times V)$. Also for each $u \in U$ and $v
\in V$ we have $uv \rightarrow uv'$ in
$\overrightarrow{\pg}(U\times V)$ and $uv \rightarrow u'v$ in
$\overrightarrow{\pg}(U\times V)$. Therefore $D \subseteq
\overrightarrow{E}(U\times V)$, and thus equality holds in this
case.

We now show that there are no other instances of equality. Suppose
some prime $p$ divides $\gcd(|U|, |V|)$.  Then $U$ and $V$ contain
respective elements $u$ and $v$ of order $p$. Observe that there
is no edge $uv\rightarrow ue$ although it appears in the third subset
in the definition of $D$. Thus the inclusion is strict unless no
prime divides  $\gcd(|U|, |V|)$, i.e., $\gcd(|U|, |V|)=1$.

(ii): The proof is similar to the first part of the
proof of (i).
\end{proof}

\begin{cor}
\label{cor:directprodsize} With reference to Lemma
\ref{lem:gendirectprod}, suppose that $G = U\times V$ and
$(|U|,|V|) = 1$. Then
\begin{enumerate}
\item The edge set of the directed power graph of $U\times V$  has size
\[
|\overrightarrow{E}(U\times V)| =
         |\overrightarrow{E}(U)|\cdot |\overrightarrow{E}(V)|
         + |\overrightarrow{E}(U)|\cdot |V|
       + |U| \cdot |\overrightarrow{E}(V)|.
\]

\item The set of bidirectional edges of the directed power graph of $U\times V$ has size
\[
 |\overleftrightarrow{E}(U \times V)| =
 2|\overleftrightarrow{E}(U)|\cdot |\overleftrightarrow{E}(V)| + |\overleftrightarrow{E}(U)|\cdot |V| + |U|\cdot |\overleftrightarrow{E}(V)|. \]

\end{enumerate}
\end{cor}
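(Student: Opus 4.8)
The plan is to compute the cardinalities of $D$ and $B$ directly. Since $\gcd(|U|,|V|)=1$, Lemma \ref{lem:directprodeq}(i) gives $\overrightarrow{E}(U\times V)=D$ and Lemma \ref{lem:directprodeq}(ii) gives $\overleftrightarrow{E}(U\times V)=B$, so the two assertions reduce to counting $|D|$ and $|B|$ from their definitions in Lemma \ref{lem:gendirectprod}. Throughout I would exploit the fact that, because $U\times V$ is a direct product, every element of $G$ has a \emph{unique} expression $uv$ with $u\in U$ and $v\in V$; hence from any ordered or unordered pair of elements of $G$ one recovers its $U$-coordinates and $V$-coordinates unambiguously, which is what makes the constituent subsets disjoint.

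For part (i), I would count the three subsets whose union is $D$. Sorting a directed edge $(uv,u'v')\in D$ according to whether the $U$-coordinate changes, the $V$-coordinate changes, or both change shows that the three defining subsets are pairwise disjoint. The first subset is in bijection with $\overrightarrow{E}(U)\times\overrightarrow{E}(V)$, hence has size $|\overrightarrow{E}(U)|\cdot|\overrightarrow{E}(V)|$; the second is in bijection with $\overrightarrow{E}(U)\times V$, hence has size $|\overrightarrow{E}(U)|\cdot|V|$; and the third, by symmetry, has size $|U|\cdot|\overrightarrow{E}(V)|$. Adding these yields (i).

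For part (ii) I would count the three subsets of $B$ in the same spirit, again using unique decomposition to see that the three are pairwise disjoint, as they are distinguished by whether the two endpoints share their $U$-coordinate, share their $V$-coordinate, or agree in neither. The second and third subsets contribute $|\overleftrightarrow{E}(U)|\cdot|V|$ and $|U|\cdot|\overleftrightarrow{E}(V)|$ exactly as in (i), since a fixed $\{u,u'\}\in\overleftrightarrow{E}(U)$ together with a fixed $v\in V$ yields the single unordered pair $\{uv,u'v\}$. The subtle point, which I expect to be the main obstacle, is the first subset: a fixed $\{u,u'\}\in\overleftrightarrow{E}(U)$ and $\{v,v'\}\in\overleftrightarrow{E}(V)$ (with $u\neq u'$ and $v\neq v'$) can be reassembled in two genuinely different ways, giving the distinct unordered pairs $\{uv,u'v'\}$ and $\{uv',u'v\}$. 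I would verify that these two never coincide (equality would force $u=u'$ or $v=v'$) and that distinct source pairs $(\{u,u'\},\{v,v'\})$ produce disjoint pairs of edges, so the first subset has size exactly $2|\overleftrightarrow{E}(U)|\cdot|\overleftrightarrow{E}(V)|$. Summing the three contributions produces the factor of $2$ in (ii).
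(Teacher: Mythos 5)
Your proposal is correct and follows essentially the same route as the paper: both arguments reduce to counting the three constituent subsets of $D$ and $B$, with the factor of $2$ in (ii) arising from the same observation that a fixed $\{u,u'\}\in\overleftrightarrow{E}(U)$ and $\{v,v'\}\in\overleftrightarrow{E}(V)$ yield the two distinct bidirectional edges $\{uv,u'v'\}$ and $\{uv',u'v\}$. Your version is slightly more explicit than the paper's in citing Lemma \ref{lem:directprodeq} for the equalities $\overrightarrow{E}(U\times V)=D$ and $\overleftrightarrow{E}(U\times V)=B$ and in justifying the pairwise disjointness of the subsets via unique decomposition, but the substance is identical.
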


\begin{proof}
Referring to the proof of (\ref{eq:dEuvin}), there are
       $|\overrightarrow{E}(U)|\cdot   |\overrightarrow{E}(V)|$
pairs in case (1d),
        $|U|   \cdot   |\overrightarrow{E}(V)|$
pairs in case (2d),
       $|\overrightarrow{E}(U)|\cdot   |V|$
pairs in case (3d), but no possible edges in case (4d). Therefore
(i) holds.  Referring to the proof of (\ref{eq:bdEuvin}), there are
       $2|\overleftrightarrow{E}(U)|\cdot   |\overleftrightarrow{E}(V)|$
pairs in case (1u) since for all $\{u,u'\} \in
\overleftrightarrow{\pg}(U)$ and $\{v,v'\} \in
\overleftrightarrow{\pg}(V)$ we have that $u'v'\leftrightarrow
uv$ and $u'v\leftrightarrow uv'$ are distinct bidirectional edges
in $\overleftrightarrow{E}(G)$.
There are
        $|U|   \cdot   |\overleftrightarrow{E}(V)|$
pairs in case (2u), and
       $|\overleftrightarrow{E}(U)|\cdot   |V|$
pairs in case (3u), but no possible edges in case (4u). Thus (ii) holds.
\end{proof}

\begin{lem}
\label{lem:prodedgedifference} Let $U$, $V$, and $V'$ be finite
groups with $|V|=|V'|$ and $(|U|,|V|) = 1$. Then
\[
 |{E}(U\times V)| - |{E}(U\times V')|
 = (|\overrightarrow{E}(U)| -2|\overleftrightarrow{E}(U)| )
           ( |\overrightarrow{E}(V)|- |\overrightarrow{E}(V')|)
        + (2|\overleftrightarrow{E}(U)| + |U|) (|{E}(V)| - |{E}(V')|).
 \]
\end{lem}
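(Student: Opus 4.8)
The plan is to reduce everything to the identity $|E(G)|=|\overrightarrow{E}(G)|-|\overleftrightarrow{E}(G)|$ from (\ref{eq:EGsize}) and then substitute the product formulas of Corollary \ref{cor:directprodsize}. First I would record that the hypotheses $|V|=|V'|$ and $(|U|,|V|)=1$ give $(|U|,|V'|)=1$ as well, so Corollary \ref{cor:directprodsize} applies verbatim to both $U\times V$ and $U\times V'$; this is the only place the coprimality and equal-order hypotheses enter.

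Next, writing $|E(U\times V)|=|\overrightarrow{E}(U\times V)|-|\overleftrightarrow{E}(U\times V)|$ and expanding each term via Corollary \ref{cor:directprodsize}(i) and (ii) (and likewise for $V'$), I would subtract the two expressions. Because $|V|=|V'|$, the two ``mixed'' contributions $|\overrightarrow{E}(U)|\cdot|V|$ and $|\overleftrightarrow{E}(U)|\cdot|V|$ cancel against their $V'$ counterparts, so only the terms carrying a factor depending on $V$ survive. Collecting these yields
\[
|E(U\times V)|-|E(U\times V')|
 =(|\overrightarrow{E}(U)|+|U|)(|\overrightarrow{E}(V)|-|\overrightarrow{E}(V')|)
 -(2|\overleftrightarrow{E}(U)|+|U|)(|\overleftrightarrow{E}(V)|-|\overleftrightarrow{E}(V')|).
\]

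To bring this into the stated form I would eliminate the bidirectional difference in favor of the undirected one using (\ref{eq:EGsize}) once more, namely $|\overleftrightarrow{E}(V)|-|\overleftrightarrow{E}(V')|=(|\overrightarrow{E}(V)|-|\overrightarrow{E}(V')|)-(|E(V)|-|E(V')|)$. Substituting and collecting the coefficient of $(|\overrightarrow{E}(V)|-|\overrightarrow{E}(V')|)$ gives $(|\overrightarrow{E}(U)|+|U|)-(2|\overleftrightarrow{E}(U)|+|U|)=|\overrightarrow{E}(U)|-2|\overleftrightarrow{E}(U)|$, while the coefficient of $(|E(V)|-|E(V')|)$ is $2|\overleftrightarrow{E}(U)|+|U|$, which is exactly the claimed identity. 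I expect no genuine obstacle here: the argument is purely algebraic bookkeeping, and the only points requiring care are verifying $(|U|,|V'|)=1$ so that the product formulas are legitimate for $U\times V'$, and keeping the directed, bidirectional, and undirected edge counts correctly related through $|E|=|\overrightarrow{E}|-|\overleftrightarrow{E}|$ at each substitution.
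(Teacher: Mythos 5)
Your proposal is correct and follows essentially the same route as the paper: expand $|E|=|\overrightarrow{E}|-|\overleftrightarrow{E}|$, apply Corollary \ref{cor:directprodsize} to both products, use $|V|=|V'|$ to collapse the differences, and then substitute $|\overleftrightarrow{E}(V)|=|\overrightarrow{E}(V)|-|E(V)|$ before collecting coefficients. Your explicit remark that $(|U|,|V'|)=1$ follows from the hypotheses is a small point the paper leaves implicit, but otherwise the two arguments coincide.
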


\begin{proof}
Expand   $|{E}(U\times V)| - |{E}(U\times V')|$ with (\ref{eq:EGsize}):
\[
|{E}(U\times V)| - |{E}(U\times V')|=( |\overrightarrow{E}(U\times V)|-|\overrightarrow{E}(U\times V)| )
    - (|\overleftrightarrow{E}(U\times V)| - |\overleftrightarrow{E}(U\times V')|).
\]
Since $|V|=|V'|$, Corollary \ref{cor:directprodsize} gives
 \begin{eqnarray*}
\overrightarrow{E}(U\times V)| -|\overrightarrow{E}(U\times V')|
   &=& (|\overrightarrow{E}(U)| +|U|)
           ( |\overrightarrow{E}(V)|- |\overrightarrow{E}(V')|),\\
|\overleftrightarrow{E}(U\times V)| - |\overleftrightarrow{E}(U\times V')|
     &=& (2|\overleftrightarrow{E}(U)| + |U|)
            (|\overleftrightarrow{E}(V)|-|\overleftrightarrow{E}(V')|).
\end{eqnarray*}
In the second line, use (\ref{eq:EGsize}) to write $|\overleftrightarrow{E}(V)| = |\overrightarrow{E}(V)|-|{E}(V)|$, and similarly for  $|\overleftrightarrow{E}(V')|$. Thus
\[
|\overleftrightarrow{E}(U\times V)| - |\overleftrightarrow{E}(U\times V')|
 = (2|\overleftrightarrow{E}(U)| + |U|)
            ((|\overrightarrow{E}(V)|-|\overrightarrow{E}(V')|) - (|E(V)|-|E(V')|)).
\]
Combining the above and simplifying gives the desired result.
\end{proof}

\begin{cor}
\label{cor:whenprodhasmoreedges}
With reference to Lemma \ref{lem:prodedgedifference},
suppose $|E(V)|\geq |E(V')|$ and
    $|\overrightarrow{E}(V)|\geq |\overrightarrow{E}(V')|$.
Then      $|{E}(U\times V)| \geq |{E}(U\times V')|$, with equality if
and only if $|E(V)|= |E(V')|$ and
    $|\overrightarrow{E}(V)|= |\overrightarrow{E}(V')|$.
\end{cor}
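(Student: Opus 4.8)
The plan is to read everything off the identity furnished by Lemma~\ref{lem:prodedgedifference}, which already does the structural work; what remains is a sign analysis. Abbreviate the two coefficients appearing there as $A = |\overrightarrow{E}(U)| - 2|\overleftrightarrow{E}(U)|$ and $B = 2|\overleftrightarrow{E}(U)| + |U|$, so that the lemma reads
\[
|E(U\times V)| - |E(U\times V')| = A\bigl(|\overrightarrow{E}(V)| - |\overrightarrow{E}(V')|\bigr) + B\bigl(|E(V)| - |E(V')|\bigr).
\]
By hypothesis the two factors $|\overrightarrow{E}(V)| - |\overrightarrow{E}(V')|$ and $|E(V)| - |E(V')|$ are nonnegative, so the entire problem reduces to controlling the signs of $A$ and $B$.

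First I would bound the coefficients. The coefficient $B$ is clearly positive, indeed $B \geq |U| \geq 1$, since $|\overleftrightarrow{E}(U)| \geq 0$ and $|U| \geq 1$. For $A$ the key input is part~(iii) of Lemma~\ref{lem:inequalities}, which gives $2|\overleftrightarrow{E}(U)| \leq |\overrightarrow{E}(U)| - |U| + 1$; rearranging yields exactly $A \geq |U| - 1 \geq 0$. With both coefficients nonnegative and both factors nonnegative, the displayed difference is a sum of nonnegative terms, establishing the inequality $|E(U\times V)| \geq |E(U\times V')|$.

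For the equality statement I would exploit strict positivity. Since $B \geq 1 > 0$, the second product vanishes if and only if $|E(V)| = |E(V')|$. Provided $U$ is nontrivial we have $|U| \geq 2$, hence $A \geq |U| - 1 \geq 1 > 0$, so the first product vanishes if and only if $|\overrightarrow{E}(V)| = |\overrightarrow{E}(V')|$. As the two products are individually nonnegative, their sum is zero precisely when each is zero, which is the claimed characterization. The only point demanding care, and the genuine obstacle, is the degenerate case $U = \{e\}$: there $A = 0$, so the directed-edge condition becomes vacuous and the characterization as stated no longer forces $|\overrightarrow{E}(V)| = |\overrightarrow{E}(V')|$. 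I would handle this by either assuming $U$ nontrivial (which holds in every application of this corollary in the sequel) or by remarking that for trivial $U$ the identity collapses to $|E(U\times V)| - |E(U\times V')| = |E(V)| - |E(V')|$ and the statement reduces to a tautology. Apart from this bookkeeping around the trivial factor, the argument is a direct sign analysis of the identity from Lemma~\ref{lem:prodedgedifference}.
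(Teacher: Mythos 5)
Your proof is correct and takes essentially the same approach as the paper: a sign analysis of the identity from Lemma~\ref{lem:prodedgedifference}, with the coefficient bound $|\overrightarrow{E}(U)|-2|\overleftrightarrow{E}(U)|\geq |U|-1$ obtained from Lemma~\ref{lem:inequalities}(iii), whereas the paper derives the same bound by noting directly that the edges into the identity are never bidirectional. Your observation that the case $U=\{e\}$ degenerates (so the strict positivity of the first coefficient, and hence the equality characterization, requires $U$ nontrivial) is a legitimate refinement that the paper's proof silently glosses over, though it affects no application of the corollary.
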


\begin{proof}
Observe that
    $|\overrightarrow{E}(U)| > 2|\overleftrightarrow{E}(U)|$
since each bidirectional edge is counted twice in $\overrightarrow{E}(U)$ and since $\overrightarrow{E}(U)$ contains one-way edges from each element of $U$ to the identity.   Clearly,
    $2|\overleftrightarrow{E}(U)| + |U|>0$.
With the given assumptions,
Lemma \ref{lem:prodedgedifference} gives that
    $|{E}(U\times V)| - |{E}(U\times V')|\geq 0$,
so the result follows. Equality holds if and only if
   $|E(V)|= |E(V')|$ and
   $|\overrightarrow{E}(V)|= |\overrightarrow{E}(V')|$ since all
terms on the right are positive, so the right side is zero if and
only if these equalities hold.
\end{proof}

\section{Semidirect products}

We recall  semidirect products.  Recall that a group $G$ is the (internal) semidirect product of a normal subgroup $U$ and a subgroup $V$ if and only if  $G=UV$ and $U\cap V=\{e\}$ \cite{r}.
To uniquely  determine $G$ from $U$ and $V$, we specify a homomorphism $\varphi:V\rightarrow \mathrm{Aut}(U)$.  As is the custom, write  $G=U\rtimes_\varphi V$ in this situation.
When there is no ambiguity, we write $\varphi v$ for $\varphi(v)$ and place such automorphisms as a superscript of the element of $U$ to which it is applied.

The elements of $G=U\rtimes_\varphi V$ can be identified with the cartesian product of the underlying sets of $U$ and $V$, just as is the case for the direct product.  We use $\ast$ for the group product on $U\rtimes_\varphi V$, namely $uv \ast u'v' = (u(u')^{\varphi v})(vv')$.
We write $(uv)^{ \cdot c}$ and $(uv)^{\ast c}$ to denote the $c$th powers of $uv$ under the corresponding operations.  We write $\ord_{\cdot}(uv)$ and $\ord_{\ast}(uv)$ to denote the order of $uv$ relative to the corresponding multiplication, and  we write $(uv)^{\cdot{-}1}$ and $(uv)^{\ast{-}1}$ for the corresponding inverses.  With this notation,
\begin{eqnarray}
\label{eq:semidirect} uv^{\ast c} &=&
(uu^{\varphi v}  u^{\varphi v^2} \cdots  u^{\varphi v^{c-2}} u^{\varphi v^{c-1}}) (v^c), \\
(uv)^{\ast-1} &=& (u^{-1})^{\varphi v^{-1}}(v^{-1}) \nonumber.
\end{eqnarray}

\begin{lem}
\label{lem:astpowercyclic} Suppose that $G$ is a finite group and
that $G=U\rtimes_\varphi V$ is the  semidirect product of a normal
cyclic subgroup $U$ and a subgroup $V$.  Pick $u\in U$ and $v\in
V$. Then  $u^{\varphi v} = u^r$ for some $r$, and
          $(uv)^{\ast c}  = u^t v^c$,
where $t = 1+r+r^2 +\cdots +r^{c-1}$.
\end{lem}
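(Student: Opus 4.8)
The plan is to dispatch the two assertions in turn: the first essentially from the structure of automorphisms of cyclic groups, and the second by a short induction built on the explicit power formula (\ref{eq:semidirect}).

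For the first assertion, I would observe that $\varphi v$ is an automorphism of the cyclic group $U$. Since every subgroup of a finite cyclic group is characteristic (for each divisor of $|U|$ there is a unique subgroup of that order, and an automorphism preserves orders of subgroups), the cyclic subgroup $\langle u\rangle$ is invariant under $\varphi v$. Hence $u^{\varphi v}=(\varphi v)(u)\in\langle u\rangle$, which is precisely to say $u^{\varphi v}=u^r$ for some integer $r$. Equivalently, one may fix a generator $x$ of $U$, write $x^{\varphi v}=x^s$ with $\gcd(s,|U|)=1$, and note $u^{\varphi v}=u^s$ for every $u$, so $r$ may even be taken independent of $u$.

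The second assertion rests on understanding how the iterates of $\varphi v$ act on $u$. Since $\varphi$ is a homomorphism into $\mathrm{Aut}(U)$, whose operation is composition, we have $\varphi(v^j)=(\varphi v)^j$, so $u^{\varphi v^j}=(\varphi v)^j(u)$. I would then prove $u^{\varphi v^j}=u^{r^j}$ by induction on $j$: the case $j=1$ is the first assertion, and for the inductive step I use that the homomorphism $\varphi v$ commutes with taking powers, giving $(\varphi v)^{j+1}(u)=(\varphi v)\bigl((\varphi v)^j(u)\bigr)=(\varphi v)(u^{r^j})=\bigl((\varphi v)(u)\bigr)^{r^j}=(u^r)^{r^j}=u^{r^{j+1}}$. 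Substituting $u^{\varphi v^j}=u^{r^j}$ into (\ref{eq:semidirect}) then collapses the product:
\[ (uv)^{\ast c}=\bigl(u\,u^{\varphi v}\,u^{\varphi v^2}\cdots u^{\varphi v^{c-1}}\bigr)v^c =\bigl(u\,u^{r}\,u^{r^2}\cdots u^{r^{c-1}}\bigr)v^c = u^{\,1+r+r^2+\cdots+r^{c-1}}v^c=u^t v^c, \]
since the powers of the single element $u$ add in their exponents. If one prefers not to cite (\ref{eq:semidirect}), the same conclusion follows by a direct induction on $c$, the step being $(uv)^{\ast(c+1)}=(u^tv^c)\ast(uv)=\bigl(u^t\,u^{\varphi v^c}\bigr)v^{c+1}=u^{t+r^c}v^{c+1}$, and $t+r^c$ is exactly the next partial sum.

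There is no serious obstacle here; the content is bookkeeping. The only points demanding care are the two invocations of the homomorphism property: that $\varphi(v^j)=(\varphi v)^j$ as automorphisms, and that an automorphism respects powers, $(u^a)^{\varphi v}=(u^{\varphi v})^a$. Both are immediate, but they are what convert the abstract composite $\varphi v^j$ into the concrete exponent $r^j$, and thereby let the geometric partial sum $1+r+\cdots+r^{c-1}$ emerge as the exponent of $u$.
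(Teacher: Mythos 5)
Your proof is correct and follows essentially the same route as the paper's: both deduce $u^{\varphi v}=u^r$ from the fact that $\langle u\rangle$ is characteristic in the cyclic group $U$, establish $u^{\varphi v^j}=u^{r^j}$ (the paper asserts this directly where you supply the short induction), and then substitute into (\ref{eq:semidirect}) to sum the geometric exponents. Your remarks that $r$ can be chosen independent of $u$ and that the homomorphism properties of $\varphi$ are the crux are accurate refinements of the same argument.
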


\begin{proof}
Consider the subgroup $\langle u \rangle$ of $U$.  Since $U$ is
cyclic and since $\langle u \rangle$ is the unique
subgroup of $U$ of its order, it must be the case that $\langle u
\rangle$ is a characteristic subgroup of $U$.  Thus $u^{\varphi v} = u^r$
for some $r$. Now  $u^{\varphi v^b} = u^{r^b}$.  With this we
compute
\[ (u  v)^{ \ast c}
   = (uu^{\varphi v}  u^{\varphi v^2} \cdots  u^{\varphi v^{c-2}} u^{\varphi v^{c-1}}) (v^c)
    = (u^1 u^r u^{r^2} \cdots u^{r^{c-1}})  (v^c)
    = (u^{1+r+r^2 +\cdots +r^{c-1}})  (v^c) = u^t v^c.
\]
\end{proof}

\begin{lem}
\label{lem:Semidirprodcyc}
Suppose that $G$ is a finite group and that $G=U\rtimes_\varphi V$ is the  semidirect product of a normal cyclic subgroup $U$ and a subgroup $V$.
Then
\begin{eqnarray}
\label{eq:sdEuvin} \overrightarrow{E}(U\rtimes_\varphi V) &\subseteq& D,
\\
\label{eq:sbdEuvin}
 \overleftrightarrow{E}(U\rtimes_\varphi V) &\subseteq& B.
\end{eqnarray}
In particular, if   $(|U|,|V|)=1$, then $E(U\rtimes_\varphi
V)\subseteq E(U\times V)$ and
$\overleftrightarrow{E}(\mathbb{Z}_{|U|}\rtimes_\varphi
\mathbb{Z}_{|V|})
           \subseteq
     \overleftrightarrow{E}(\mathbb{Z}_{|U|}\times \mathbb{Z}_{|V|})$.
\end{lem}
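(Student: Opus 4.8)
The plan is to mirror the proof of Lemma~\ref{lem:gendirectprod} almost verbatim, replacing the coordinatewise power computation $(uv)^{\cdot c}=u^c v^c$ available for the direct product with its semidirect analog supplied by Lemma~\ref{lem:astpowercyclic}. The whole point is that, although $\ast$ twists the $U$-coordinate, Lemma~\ref{lem:astpowercyclic} still writes every $\ast$-power of $uv$ as $u^t v^c$: the $U$-component remains a power of $u$ (here the cyclicity of $U$ is essential, since then $\langle u\rangle$ is characteristic and $u^{\varphi v}\in\langle u\rangle$), and the $V$-component is exactly $v^c$, just as in the direct case. With this in hand the same four-case split should survive.

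For (\ref{eq:sdEuvin}) I would suppose $uv\rightarrow u'v'$, so $u'v'=(uv)^{\ast c}$ for some $c\geq 1$ with $u'v'\neq uv$. Lemma~\ref{lem:astpowercyclic} gives $u'=u^t\in\langle u\rangle$ and $v'=v^c\in\langle v\rangle$, so $u'=u$ or $u\rightarrow u'$ in $\overrightarrow{\pg}(U)$, and $v'=v$ or $v\rightarrow v'$ in $\overrightarrow{\pg}(V)$. Discarding the impossible case $u'=u$, $v'=v$, the remaining three possibilities are exactly those of cases (1d)--(3d) in the proof of (\ref{eq:dEuvin}), placing $(uv,u'v')$ in one of the three subsets defining $D$.

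For (\ref{eq:sbdEuvin}) I would take $\{uv,u'v'\}\in\overleftrightarrow{E}(U\rtimes_\varphi V)$ with $uv\neq u'v'$, so both $uv\rightarrow u'v'$ and $u'v'\rightarrow uv$. Applying Lemma~\ref{lem:astpowercyclic} in each direction yields $u'\in\langle u\rangle$, $v'\in\langle v\rangle$ and $u\in\langle u'\rangle$, $v\in\langle v'\rangle$, hence $\langle u\rangle=\langle u'\rangle$ and $\langle v\rangle=\langle v'\rangle$. Lemma~\ref{lem:bidi=gensame} then gives $u'=u$ or $uu'\in\overleftrightarrow{E}(U)$, and $v'=v$ or $vv'\in\overleftrightarrow{E}(V)$; discarding $u'=u$, $v'=v$ leaves cases (1u)--(3u), placing $\{uv,u'v'\}$ in a subset defining $B$.

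Finally, for the ``in particular'' clause I would invoke Lemma~\ref{lem:directprodeq}, which for $(|U|,|V|)=1$ gives $\overrightarrow{E}(U\times V)=D$ and $\overleftrightarrow{E}(U\times V)=B$; combined with the two inclusions just established this yields $\overrightarrow{E}(U\rtimes_\varphi V)\subseteq\overrightarrow{E}(U\times V)$, and since an undirected edge is present exactly when one of its orientations is a directed edge, $E(U\rtimes_\varphi V)\subseteq E(U\times V)$ follows; the cyclic case $U=\mathbb{Z}_{|U|}$, $V=\mathbb{Z}_{|V|}$ of (\ref{eq:sbdEuvin}) gives the last inclusion. I expect the only real obstacle to be the first paragraph: verifying that the $U$-coordinate of a power stays inside $\langle u\rangle$ and that the $V$-coordinate is $v^c$, which is exactly where Lemma~\ref{lem:astpowercyclic} and the hypothesis that $U$ is cyclic are indispensable. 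Once that reduction is secured, the rest is bookkeeping identical to the direct-product proof.
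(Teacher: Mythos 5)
Your proposal is correct and follows essentially the same route as the paper: the paper's proof likewise invokes Lemma~\ref{lem:astpowercyclic} to write $(uv)^{\ast c}=u^t v^c$, reads off the components since the underlying set is the cartesian product, and then says ``arguing as in the proof of Lemma~\ref{lem:gendirectprod}, we reach the same conclusion.'' Your write-up merely makes explicit the four-case bookkeeping and the appeal to Lemmas~\ref{lem:bidi=gensame} and~\ref{lem:directprodeq} that the paper leaves implicit.
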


\begin{proof}
Pick $u$, $u'\in U$ and $v$, $v'\in V$, and say $u^{\varphi v} =
u^r$. Suppose  $uv\rightarrow u'v'$ in
$\overrightarrow{\pg}(U\rtimes_\varphi V)$. Then
     $u'v' = (uv)^{\ast c} = u^t v^c$
for some $t$ as in Lemma \ref{lem:astpowercyclic}.  Since the
product is semidirect,
        $u'=u^{t}$ and $v'=v^c$.
Arguing as in the proof of Lemma \ref{lem:gendirectprod}, we reach the same conclusion.
\end{proof}

\begin{lem}
\label{lem:divideorder1}
Suppose that $G$ is a finite group and
that $G=U\rtimes_\varphi V$ is the  semidirect product of a normal
abelian subgroup $U$ and a subgroup $V$.   Assume $U$ and  $V$
have coprime orders. Then  for all $u \in U$ and $v\in V$,
     $\ord_{\ast}(uv)|\ord_{\cdot}(uv)$.
\end{lem}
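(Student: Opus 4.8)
The plan is to show directly that $(uv)^{\ast n}=e$, where $n=\ord_\cdot(uv)$, from which $\ord_\ast(uv)\mid n=\ord_\cdot(uv)$ follows. Write $k=\ord(u)$ (in $U$) and $m=\ord(v)$ (in $V$). Since $U$ and $V$ have coprime orders and $k\mid |U|$, $m\mid |V|$, the integers $k$ and $m$ are coprime; as $\cdot$ is the direct product operation, $\ord_\cdot(uv)=\mathrm{lcm}(k,m)=km$. So it suffices to prove $(uv)^{\ast km}=e$. Applying the power formula (\ref{eq:semidirect}) with $c=km$ gives
\[
(uv)^{\ast km}=\Bigl(\prod_{i=0}^{km-1}u^{\varphi v^{i}}\Bigr)(v^{km}).
\]

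The $V$-component is immediate: $m\mid km$ gives $v^{km}=e$. For the $U$-component I would first record that $\varphi v^{m}=\varphi(v^{m})=\varphi(e)=\mathrm{id}_U$, so the sequence $i\mapsto u^{\varphi v^{i}}$ is periodic with period dividing $m$. Since $km$ is a multiple of $m$, the ordered product breaks into $k$ identical consecutive blocks, giving $\prod_{i=0}^{km-1}u^{\varphi v^{i}}=P^{k}$, where $P=\prod_{i=0}^{m-1}u^{\varphi v^{i}}$. Finally, each factor $u^{\varphi v^{i}}$ is the image of $u$ under an automorphism of $U$ and hence has order $k$; because $U$ is abelian, $P^{k}=\prod_{i=0}^{m-1}\bigl(u^{\varphi v^{i}}\bigr)^{k}=e$. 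Thus both components of $(uv)^{\ast km}$ are trivial, so $(uv)^{\ast km}=e$, as desired.

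The two hypotheses enter at identifiable points, and the main thing to get right is the $U$-component. Abelianness of $U$ is essential for the last step $P^{k}=\prod(u^{\varphi v^{i}})^{k}$: the regrouping into blocks only uses associativity and the periodicity $\varphi v^{m}=\mathrm{id}_U$, but collapsing $P^{k}$ into a product of $k$-th powers requires commutativity. Coprimality of $|U|$ and $|V|$ is what forces $\ord_\cdot(uv)=km$ (rather than a proper divisor of $km$), which makes the exponent on $P$ exactly $k=\ord(u)$, matching the order of each factor; without coprimality the exponent $n/m$ need not be a multiple of $k$ and the argument would break. Note also that $U$ need only be abelian, not cyclic, so I would use the general power formula (\ref{eq:semidirect}) rather than the cyclic refinement of Lemma \ref{lem:astpowercyclic}.
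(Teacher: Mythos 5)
Your proof is correct and follows essentially the same route as the paper's: both reduce to showing $(uv)^{\ast km}=e$ with $km=\ord_\cdot(uv)$ (using coprimality), expand via (\ref{eq:semidirect}), and use abelianness of $U$ together with the fact that automorphisms preserve order to kill the $U$-component. The only cosmetic difference is that the paper computes $(uv)^{\ast\ord(v)}$ first and then raises the result to the $\ord(u)$-th power, whereas you expand the full product at once and regroup it into $k$ identical blocks of length $m$; these are the same computation.
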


\begin{proof}
To prove the result  we show that $(u v)^{\ast
\ord_{\cdot }({uv})}$ is the identity of $U\rtimes_\varphi V$. Note that
   $\ord_{\cdot}(u v) = \ord(u)\ord(v)/\gcd(\ord(u),\ord(v))
                              = \ord(u)\ord(v)$.
By (\ref{eq:semidirect}),
\begin{eqnarray*}
      (u  v)^{ \ast \ord(v)}  &=&
(uu^{\varphi v}  u^{\varphi v^2} \cdots  u^{\varphi v^{\ord(v)-2}}
u^{\varphi v^{\ord(v)-1}}) (v^{\ord(v)})\\ &=& (uu^{\varphi v}
u^{\varphi v^2} \cdots  u^{\varphi v^{\ord(v)-2}} u^{\varphi
v^{\ord(v)-1}})(e).
\end{eqnarray*}

Thus
\begin{eqnarray*}
(u    v)^{ \ast \ord_{\cdot}({u v})}  &=&\left[ (uu^{\varphi v}
u^{\varphi v^2} \cdots  u^{\varphi v^{\ord(v)-2}} u^{\varphi
v^{\ord(v)-1}})(e)\right]^{\ast \ord(u)}\\
    &=& (u^{\ord(u)}(u^{\varphi v})^{\ord(u)}
(u^{\varphi v^2})^{\ord(u)} \cdots  (u^{\varphi
v^{\ord(v)-2}})^{\ord(u)} (u^{\varphi
v^{\ord(v)-1}})^{\ord(u)})(e)\\
    &=& (e) (e),
    \end{eqnarray*} since $U$ is abelian, as required.
\end{proof}

For later use we recall a couple facts concerning involving semidirect products.

\begin{lem}\cite[Theorem 10.30]{r} (The Schur--Zassenhaus theorem)
\label{lem:schurzass} Let $G$ be a finite group, and let $K$ be a normal subgroup of  $G$ with $(|K|, |G:K|) = 1$. Then $G$ is a semidirect product of $K$ and $G/K$.  In particular, there exists a subgroup $H$ of $G$ with order $|G : K|$ satisfying $G = HK$ and $H\cap K=\{e\}$.
\end{lem}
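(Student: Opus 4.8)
The plan is to prove existence of a complement $H$ to $K$ in $G$ by induction on $|G|$, first settling the case in which $K$ is abelian (the crux) and then reducing the general case to it by standard Sylow theory. Throughout write $m=|G:K|$ and $n=|K|$, so $\gcd(m,n)=1$.

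\emph{The abelian case.} Suppose $K$ is abelian. Fix a transversal $\{t_x : x\in G/K\}$ of the cosets of $K$, so every element of $G$ is uniquely $t_x a$ with $a\in K$. Normality of $K$ makes conjugation by $t_x$ an action of $G/K$ on $K$ (written ${}^x$), and the representatives multiply as $t_x t_y = t_{xy}\,c(x,y)$ for a factor set $c:G/K\times G/K\to K$; associativity forces the $2$-cocycle identity on $c$. The goal is to show $c$ is a coboundary, so that a corrected transversal becomes multiplicative. Put $d(x)=\prod_{z\in G/K} c(x,z)$, which is unambiguous since $K$ is abelian. Multiplying the cocycle identity over all $z$ (reindexing one product via $z\mapsto yz$) yields
\[ c(x,y)^m\, d(xy) = {}^x\! d(y)\cdot d(x). \]
Choose $s$ with $ms\equiv 1\pmod n$ and set $e(x)=d(x)^{s}$. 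Since $K$ is abelian, $c(x,y)^n=\{e\}$ and ${}^x$ is an automorphism, so raising the displayed relation to the $s$th power gives $c(x,y)={}^x\! e(y)\cdot e(x)\cdot e(xy)^{-1}$. Thus $c$ is a coboundary, the transversal can be adjusted by the values of $e$ into a genuine homomorphism $G/K\to G$, and its image is the required complement $H$.

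\emph{The general case.} Argue by induction on $|G|$, the abelian case now being available. Pick a prime $p\mid n$ and a Sylow $p$-subgroup $P$ of $K$. By the Frattini argument, $G=K\,N_G(P)$, so $N_G(P)/N_K(P)\cong G/K$ has order $m$ while $N_K(P)=N_G(P)\cap K$ has order dividing $n$, hence coprime to $m$. If $N_G(P)\neq G$, induction applied to $N_G(P)$ yields a subgroup $H$ of order $m$ complementing $N_K(P)$ there; since $H\cap K=H\cap N_K(P)=\{e\}$ and $|H|=m=|G:K|$, this $H$ complements $K$ in $G$. Otherwise $P\trianglelefteq G$, so its center $Z=Z(P)$ is a nontrivial characteristic subgroup of $P$, hence normal in $G$. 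In $G/Z$ the normal subgroup $K/Z$ has order coprime to $m$, so induction gives a subgroup $H/Z$ of order $m$ with $H\cap K=Z$; applying the abelian case to the abelian normal subgroup $Z$ of $H$ splits off a subgroup $L$ of order $m$ with $L\cap Z=\{e\}$, whence $L\cap K=L\cap(H\cap K)=\{e\}$ and $L$ complements $K$ in $G$.

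The main obstacle is the abelian case, precisely the step promoting $c^{m}=\text{(coboundary)}$ to $c=\text{(coboundary)}$: this is where coprimality is indispensable, as it makes the $m$th-power map invertible on the $n$-torsion abelian group $K$. The cocycle bookkeeping behind the displayed relation must be done with care about the $G/K$-action, but the remaining ingredients—Frattini's argument, the normality of $Z(P)$, and the induction on $|G|$—are routine.
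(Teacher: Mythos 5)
The paper does not prove this lemma at all: it is imported by citation from Rose (Theorem 10.30), whose proof is exactly the standard argument you give --- the cocycle-averaging/coprimality trick for abelian $K$, then reduction to that case by the Frattini argument, $Z(P)$, and induction on $|G|$. Your write-up of that existence proof is correct (the displayed relation $c(x,y)^m d(xy) = {}^x d(y)\, d(x)$ and the passage to $e(x)=d(x)^s$ check out; the only blemish is the typo $c(x,y)^n=\{e\}$, which should read $c(x,y)^n=e$), so the only difference from the paper is that you have supplied a proof where the paper supplies a reference.
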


\begin{lem}\cite[Theorem 1.2 (i)]{b1}
\label{lem:normalindex}
Let $p$ be an odd prime, and suppose that $P$ is a non-abelian
$p$-group with a cyclic subgroup of index $p$. Then $P \cong
\mathbb{Z}_{p^{\alpha-1}}\rtimes_\varphi \mathbb{Z}_{p}$, the center of $P$ has order $|Z(P)| = p^{\alpha - 2}$, and $P$ has presentation
\[ P \cong M_{p^{\alpha}} = \langle a,b | a^{p^{\alpha - 1}} = b^{p} = 1 ,
b^{-1}ab = a^{1+p^{\alpha - 2}} \rangle.
\]
\end{lem}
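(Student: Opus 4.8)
The plan is to prove this by the classical technique for a $p$-group with a cyclic subgroup of index $p$. Write $|P|=p^{\alpha}$ and let $A=\langle a\rangle$ be the given cyclic subgroup, so $|A|=p^{\alpha-1}$ and $[P:A]=p$. Since $p$ is the least prime dividing $|P|$, the index-$p$ subgroup $A$ is normal. A group of order $p^2$ is abelian, so the non-abelian hypothesis forces $\alpha\geq 3$. I would then choose $b\in P\setminus A$; then $P=\langle a,b\rangle$ and $b^{p}\in A$. Conjugation by $b$ restricts to an automorphism of the cyclic group $A$, so $b^{-1}ab=a^{s}$ for some $s$ with $\gcd(s,p)=1$. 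Because $b^{p}\in A$ and $A$ is abelian, conjugation by $b^{p}$ is trivial on $A$, whence $s^{p}\equiv 1\pmod{p^{\alpha-1}}$.

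Next I would pin down $s$. The unit group $(\mathbb{Z}/p^{\alpha-1}\mathbb{Z})^{\times}$ is cyclic of order $p^{\alpha-2}(p-1)$, and since $p\nmid(p-1)$ its unique subgroup of order $p$ (here $\alpha\geq 3$ is used) is $\{1+kp^{\alpha-2}:0\leq k<p\}$. As $P$ is non-abelian, $s\not\equiv 1$, so $s\equiv 1+kp^{\alpha-2}$ with $k\not\equiv 0\pmod p$. Replacing $b$ by $b^{j}$ for suitable $j$ coprime to $p$ replaces $s$ by $s^{j}\equiv 1+jkp^{\alpha-2}$, and choosing $j\equiv k^{-1}$ normalizes the relation to $b^{-1}ab=a^{1+p^{\alpha-2}}$; this replacement keeps $b\notin A$ and keeps $b^{p}\in A$.

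The crux is to replace $b$ by an element of order exactly $p$ in the same coset, and this is precisely where $p$ odd is indispensable. Write $b^{p}=a^{m}$. If $p\nmid m$ then $a^{m}$ generates $A$, whence $A=\langle b^{p}\rangle\leq\langle b\rangle$ and, since $b\notin A$, $\langle b\rangle=P$ would be cyclic, contradicting non-abelianness; hence $p\mid m$. I would then expand $(a^{t}b)^{p}$ via the commutation relation to obtain $(a^{t}b)^{p}=a^{t(1+c+\cdots+c^{p-1})}b^{p}$, where $c\equiv 1+p^{\alpha-2}$. The key identity is $1+c+\cdots+c^{p-1}\equiv p\pmod{p^{\alpha-1}}$: since $c^{i}\equiv 1+ip^{\alpha-2}$ the sum equals $p+\binom{p}{2}p^{\alpha-2}=p+\tfrac{p-1}{2}p^{\alpha-1}$, and the correction term vanishes mod $p^{\alpha-1}$ exactly when $p$ is odd (so that $\tfrac{p-1}{2}$ is an integer). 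Thus $(a^{t}b)^{p}=a^{tp+m}$, and since $p\mid m$ the congruence $tp+m\equiv 0\pmod{p^{\alpha-1}}$ is solvable; replacing $b$ by $a^{t}b$ (which leaves the normalized conjugation relation undisturbed, as $A$ is abelian) yields a generator of order $p$.

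Finally I would assemble the conclusion. Since $A\trianglelefteq P$, $\langle b\rangle\cap A=\{e\}$ (as $|b|=p$ and $b\notin A$), and $|A|\cdot|\langle b\rangle|=p^{\alpha}=|P|$, we get $P=A\rtimes\langle b\rangle\cong\mathbb{Z}_{p^{\alpha-1}}\rtimes_{\varphi}\mathbb{Z}_{p}$, where $\varphi$ sends a generator of $\mathbb{Z}_{p}$ to $a\mapsto a^{1+p^{\alpha-2}}$. The relations $a^{p^{\alpha-1}}=b^{p}=1$ and $b^{-1}ab=a^{1+p^{\alpha-2}}$ then present $P$, identifying it with $M_{p^{\alpha}}$. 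For the center, an element $a^{i}b^{j}$ commutes with $a$ only if $c^{j}\equiv 1\pmod{p^{\alpha-1}}$, which forces $j\equiv 0$ (as $c$ has order $p$), and then commutes with $b$ only if $i(c-1)\equiv 0\pmod{p^{\alpha-1}}$, i.e. $p\mid i$; hence $Z(P)=\langle a^{p}\rangle$, of order $p^{\alpha-2}$. The main obstacle throughout is the order-$p$ normalization of $b$, that is, the power computation that simultaneously exploits $p\mid m$ and the vanishing of the $\binom{p}{2}$ term for odd $p$.
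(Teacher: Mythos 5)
The paper does not prove this lemma at all --- it is quoted verbatim from Berkovich \cite[Theorem 1.2(i)]{b1} and used as a black box --- so there is no internal proof to compare against; your write-up is a self-contained proof of the cited result. Your argument is the standard classification argument and it is correct: normality of the index-$p$ subgroup $A$, the forced bound $\alpha\geq 3$, the reduction of $b^{-1}ab=a^{s}$ to $s\equiv 1+p^{\alpha-2}$ via cyclicity of $(\mathbb{Z}/p^{\alpha-1}\mathbb{Z})^{\times}$ (the first place $p$ odd is used), the observation that $p\mid m$ where $b^{p}=a^{m}$, and the normalization of $b$ to an element of order $p$ via the identity $1+c+\cdots+c^{p-1}\equiv p\pmod{p^{\alpha-1}}$ (the second, decisive use of $p$ odd --- exactly what fails for the dihedral, quaternion and semidihedral $2$-groups) are all correctly executed, and the computations of the semidirect decomposition, the presentation, and $|Z(P)|=p^{\alpha-2}$ follow. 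One pedantic remark: with the convention $b^{-1}ab=a^{c}$, pushing the $b$'s to the right in $(a^{t}b)^{p}$ actually produces the exponent $t(1+c^{-1}+\cdots+c^{-(p-1)})$ rather than $t(1+c+\cdots+c^{p-1})$; since $c^{-1}$ lies in the same order-$p$ subgroup $\{1+kp^{\alpha-2}\}$, the sum is still $\equiv p\pmod{p^{\alpha-1}}$ and nothing changes, but you should either fix the convention ($bab^{-1}=a^{c}$) or note the invariance.
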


\section{Some group theory}

We need   a few more results from group theory.

\begin{defn}
\label{def:pcomp}
Let $p$ be a prime. Let $G$ be a finite group, and let $P$ be a Sylow
$p$-subgroup of $G$. A {\em $p$-complement} in $G$ is a  subgroup with index equal to the order of a Sylow $p$-subgroup.
\end{defn}

\begin{thm} \cite[Theorem 10.21]{r}  (Burnside's transfer theorem)
With the notation of Definition \ref{def:pcomp},
if $P\subseteq Z(N_G(P))$, then $G$ has a normal $p$-complement.
\end{thm}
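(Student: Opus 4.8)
The plan is to prove this via the transfer homomorphism (Verlagerung). First I would note that the hypothesis $P \subseteq Z(N_G(P))$ forces $P$ to be abelian, since $P \le N_G(P)$ and $P$ lies in the center of $N_G(P)$. Write $p$ for the prime and $n = [G:P]$, so that $|P|$ is the full $p$-part of $|G|$ and $\gcd(n,|P|)=1$. I would then construct the transfer $V \colon G \to P$: fix a left transversal $t_1,\dots,t_n$ of $P$ in $G$, and for $g \in G$ write $g t_i = t_{\sigma(i)} h_i$ with $h_i \in P$, setting $V(g) = \prod_i h_i$ (this lands in $P$ itself because $P$ is abelian, so $P' = \{e\}$). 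Standard arguments show $V$ is a homomorphism independent of the chosen transversal. The goal is to show $V$ is surjective with kernel $K$ a normal $p$-complement.

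The key intermediate result is Burnside's fusion lemma: if $a, b \in P$ are conjugate in $G$, then they are conjugate in $N_G(P)$. To see this, suppose $g^{-1} a g = b$. Since $P$ is abelian it centralizes $b$, and $g^{-1}Pg$ centralizes $g^{-1}ag = b$ as well; thus $P$ and $g^{-1}Pg$ are both Sylow $p$-subgroups of $C_G(b)$, so by Sylow's theorem there is $d \in C_G(b)$ with $d^{-1}(g^{-1}Pg)d = P$. Then $c = gd \in N_G(P)$ and $c^{-1} a c = d^{-1} b d = b$. Under our hypothesis conjugation by elements of $N_G(P)$ fixes $P$ pointwise, so in fact $a = b$; that is, \emph{distinct elements of $P$ are never $G$-conjugate}.

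With the fusion lemma in hand, I would evaluate $V$ on $P$. Decomposing the cosets into $\langle x \rangle$-orbits gives the standard evaluation $V(x) = \prod_{j} y_j^{-1} x^{n_j} y_j$ for suitable $y_j \in G$ and positive integers $n_j$ with $\sum_j n_j = n$, each factor $y_j^{-1} x^{n_j} y_j$ lying in $P$. Each such factor is an element of $P$ that is $G$-conjugate to $x^{n_j} \in P$, so by the strengthened fusion lemma it equals $x^{n_j}$. Hence $V(x) = x^{\sum_j n_j} = x^n$. Since $\gcd(n,|P|)=1$, the map $x \mapsto x^n$ is an automorphism of the abelian $p$-group $P$, so $V|_P$ is an isomorphism onto $P$. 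Consequently $V$ is surjective and $\ker V \cap P = \{e\}$. Setting $K = \ker V$, we obtain $K \trianglelefteq G$ and $G/K \cong P$, whence $|K| = n = [G:P]$ and $G = KP$; as $[G:K] = |P|$ equals the order of a Sylow $p$-subgroup, $K$ is the desired normal $p$-complement.

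The main obstacle is the machinery of the transfer itself: verifying that $V$ is a well-defined homomorphism independent of the transversal, and establishing the orbit-counting evaluation formula $V(x) = \prod_j y_j^{-1} x^{n_j} y_j$. These steps are routine but not short; once they are available, the fusion argument together with the coprimality of $n$ and $|P|$ finishes the proof quickly. Since the statement is cited as \cite[Theorem 10.21]{r}, in the paper itself this development would simply be quoted rather than reproduced.
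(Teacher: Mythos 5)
Your proof is correct: it is the standard transfer-homomorphism proof of Burnside's normal $p$-complement theorem, with the fusion step correctly strengthened (under $P\subseteq Z(N_G(P))$, $G$-conjugate elements of $P$ are actually equal) and the coprimality of $[G:P]$ and $|P|$ used to make $V|_P$ an automorphism. The paper itself offers no proof to compare against --- it simply cites the result as Theorem 10.21 of Rose --- and your argument is essentially the one found there; the only unproved ingredients you rely on (well-definedness of the transfer and the orbit evaluation formula $V(x)=\prod_j y_j^{-1}x^{n_j}y_j$) are standard and you flag them explicitly, so there is no gap.
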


\begin{cor}
\cite[Corollary 10.24]{r}
\label{cor:qcomplementsmall}
With the notation of Definition \ref{def:pcomp},
suppose $G$ contains a cyclic $q$-Sylow subgroup, where $q$ is the least prime divisor of $|G|$. Then $G$ has a normal $q$-complement.
\end{cor}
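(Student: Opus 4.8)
The plan is to apply Burnside's transfer theorem with the prime $p$ taken to be $q$. Let $P$ be the given cyclic Sylow $q$-subgroup of $G$, and write $|P| = q^a$. By that theorem it suffices to prove $P \subseteq Z(N_G(P))$, which is equivalent to showing $N_G(P) = C_G(P)$, since $C_G(P)$ consists precisely of those elements of $G$ that commute with every element of $P$.

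First I would invoke the $N/C$ theorem: conjugation induces a homomorphism $N_G(P) \to \mathrm{Aut}(P)$ whose kernel is exactly $C_G(P)$, so that $N_G(P)/C_G(P)$ embeds into $\mathrm{Aut}(P)$. Because $P$ is cyclic of order $q^a$, its automorphism group has order $\phi(q^a) = q^{a-1}(q-1)$ by (\ref{eq:phi(n)primes}). Hence the index $|N_G(P):C_G(P)|$ divides $q^{a-1}(q-1)$.

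Next I would pin down the $q$-part of this index. Since $P$ is cyclic it is abelian, so $P \subseteq C_G(P) \subseteq N_G(P)$. As $P$ is a Sylow $q$-subgroup of $G$, it is in particular a Sylow $q$-subgroup of $N_G(P)$, and it already lies inside $C_G(P)$; consequently $q$ does not divide $|N_G(P):C_G(P)|$. Combining this observation with the previous divisibility, the index must divide $q-1$.

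The decisive step is the least-prime-divisor argument. Every prime dividing $q-1$ is strictly smaller than $q$, whereas $|N_G(P):C_G(P)|$ divides $|G|$, whose smallest prime divisor is $q$ by hypothesis. An integer that simultaneously divides $q-1$ (so has only prime factors less than $q$) and divides $|G|$ (so has only prime factors at least $q$) can have no prime factors at all, forcing $|N_G(P):C_G(P)| = 1$. Therefore $N_G(P) = C_G(P)$, so $P \subseteq Z(N_G(P))$, and Burnside's transfer theorem delivers a normal $q$-complement. The only part demanding care is this final clean separation of prime divisors; the remainder is a routine application of the $N/C$ theorem together with the cyclicity (hence abelianness) of $P$.
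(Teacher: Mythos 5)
Your proof is correct and follows exactly the route the paper intends: it presents this result as a corollary of Burnside's transfer theorem (stated immediately before it, citing Rose), and your $N/C$-theorem argument showing $|N_G(P):C_G(P)|$ divides $q-1$ while also dividing $|G|$ is the standard derivation given there. No issues.
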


\begin{lem}
\label{lem:normalS}
A finite group containing a cyclic subgroup of prime index  has a non-trivial normal Sylow subgroup.
\end{lem}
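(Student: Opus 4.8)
The plan is to induct on $|G|$, peeling off the Sylow subgroup at the least prime via Corollary \ref{cor:qcomplementsmall}. Let $C$ be the given cyclic subgroup of prime index, write $p$ for that prime (so $p\mid |G|$), and let $q$ denote the least prime divisor of $|G|$. If $|G|$ is a prime power there is nothing to prove, since then $G$ is its own non-trivial normal Sylow subgroup; so I assume $|G|$ has at least two distinct prime divisors. I would then distinguish the two cases $q=p$ and $q\neq p$.

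First suppose $q=p$, i.e. the cyclic subgroup $C$ has index equal to the least prime divisor of $|G|$. Then $C$ is normal in $G$ by the standard fact that a subgroup whose index is the least prime divisor of the group order is normal (the action of $G$ on the $p$ cosets of $C$ has kernel of $p$-power index dividing $p!$, forcing the kernel to be $C$). Since $|G|$ has a second prime divisor, choose any prime $\ell\neq p$ dividing $|G|$. Because $\ell\neq p$, the full $\ell$-part of $|G|$ already divides $|C|$, so the Sylow $\ell$-subgroup of the cyclic group $C$ is a Sylow $\ell$-subgroup of $G$; being the unique subgroup of $C$ of its order it is characteristic in $C$, hence normal in $G$. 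This is a non-trivial normal Sylow subgroup of $G$.

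Now suppose $q\neq p$, so that $q<p$. Since $\gcd(q,p)=1$, the full $q$-part of $|G|$ divides $|C|$, whence the Sylow $q$-subgroup of $G$ coincides with that of the cyclic group $C$ and is cyclic. Corollary \ref{cor:qcomplementsmall} then furnishes a normal $q$-complement $N\trianglelefteq G$, and I would apply the inductive hypothesis to $N$. The index $[N:N\cap C]$ equals $[NC:C]$, a divisor of $[G:C]=p$, so it is $1$ or $p$, and $N\cap C$ is cyclic as a subgroup of $C$. If $[N:N\cap C]=p$ then $N$ has a cyclic subgroup of prime index and $|N|<|G|$, so by induction $N$ has a non-trivial normal Sylow subgroup $S$; if instead $N\leq C$ then $N$ is itself cyclic and any of its non-trivial Sylow subgroups $S$ serves. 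In either case $S$ is a normal---hence unique, hence characteristic---Sylow subgroup of $N$, and since $N\trianglelefteq G$ this gives $S\trianglelefteq G$. Finally, because $q\nmid |N|$ the order of $S$ is the full power of its prime in $|G|$, so $S$ is a non-trivial normal Sylow subgroup of $G$.

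The main obstacle is the case $q\neq p$: here $C$ need not be normal, so one cannot simply extract a characteristic Sylow subgroup from $C$ as in the first case. The device that rescues it is that the least-prime Sylow subgroup is forced to be cyclic (it sits inside $C$), which triggers the normal $q$-complement from Corollary \ref{cor:qcomplementsmall}; passing to this smaller normal subgroup $N$ and checking that it still carries a cyclic subgroup of prime index is what makes the induction go through, and the transitivity ``characteristic in normal is normal'' is what lifts the Sylow subgroup found in $N$ back up to $G$.
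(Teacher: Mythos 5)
Your proof is correct. It shares the paper's broad skeleton---produce a normal complement $N$, observe that $N \cap C$ is cyclic of index dividing $p$ in $N$, and induct---but the two arguments diverge in how the normal complement is obtained and in the case analysis. The paper fixes an arbitrary prime $r \neq p$, notes that the Sylow $r$-subgroup $R$ of $C$ is already a Sylow $r$-subgroup of $G$, and, when $R$ is not normal, computes $N_G(R)=C$ so that Burnside's transfer theorem applies directly to yield a normal $r$-complement; its terminal subcase $N\cap C=\{e\}$ forces $|G|=pr^\alpha$ and is settled by Corollary \ref{cor:qcomplementsmall}. You instead split on whether the index prime $p$ equals the least prime divisor $q$ of $|G|$: when $p=q$ you finish immediately via the ``index equal to the least prime implies normal'' criterion (a fact the paper never needs) together with the characteristic Sylow subgroups of the now-normal cyclic $C$, and when $q<p$ you apply Corollary \ref{cor:qcomplementsmall} at $q$, where its hypothesis holds automatically and no normalizer computation is required. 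Your handling of the degenerate subcase $N\leq C$---extract a characteristic Sylow subgroup of the cyclic group $N$---is arguably cleaner than the paper's order count. The choice of induction parameter ($|G|$ for you, the number of distinct prime divisors for the paper) is immaterial, since $N$ is smaller in both senses. All the supporting claims you use (the $\ell$-part of $|G|$ divides $|C|$ for $\ell\neq p$; a normal Sylow subgroup is characteristic; a Sylow subgroup of the $q$-complement $N$ is a Sylow subgroup of $G$) check out.
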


\begin{proof}
Let $G$ be a finite group and suppose $C$ is a cyclic subgroup of prime index $p$ in $G$.  Induct on the number of distinct prime factors of $|G|$.
If $G$ is a $p$-group, then $G$ itself is a normal Sylow $p$-subgroup.
Assume that there is a prime $r$ different from $p$ which divides $|G|$.

 Let $R$ be a Sylow $r$-subgroup of $C$.
 Now
        $|G:R| = p\cdot |C:R|$
is not divisible by $r$, so $R$ is a Sylow $r$-subgroup of $G$.  We are done if $R$ is normal in $G$,  so assume that this is not the case.
Now $R$ is normal in the cyclic subgroup $C$, so $C\leq N_G(R)< G$.
Since $C$ has prime index in $G$, it must be the case that $N_G(R)=C$.
In particular, $R\leq Z(N_G(R))$, so $G$ has a normal $r$-complement $N$ by Burnside's transfer theorem.    Thus $RN=G$, and hence $CN=G$ as well.

Now  $N\cap C$ is cyclic since $C$ is, and
        $|N:N\cap C|=|CN:C|=|G:C| = p$.
  Suppose $N\cap C$ is nontrivial. Then by induction $N$ has  a nontrivial normal Sylow subgroup $S$.  In fact, $S$ is characteristic in $N$, so $S$ is normal in $G$. Observe that $S$ is a Sylow subgroup of $G$ since $|G:N|$ and $|N|$ are coprime.

Suppose
    $N \cap C = 1$.
Then $|G|=|NC|=|N||C|/|N\cap C| = (|G|/|R|)(|G|/p)/1$, so $|G|=p
r^\alpha$ for some positive power $\alpha$. Now  $R$ and $P$ are cyclic subgroups of $G$, so Corollary \ref{cor:qcomplementsmall}  implies that if $r < p$ then $G$ has a normal $r$-complement, and if $p < r$ then $G$ has a normal $p$-complement.
In either case, there is a normal Sylow subgroup of $G$.
\end{proof}

\begin{lem}
\label{lem:divideorder2}
Let $G$ be a finite group of order $n$.
Suppose $n$ is not a power of  2, and
  let $r>2$ be a prime divisor of $n$.
Suppose that $G$ contains both
  a cyclic $r$-complement and
  a normal $r$-Sylow subgroup which itself contains
        a cyclic subgroup of index $r$.
Then there is a bijection $\lambda : G \rightarrow \mathbb{Z}_{n}$
such that
        $\ord(g)|\ord(\lambda(g))$ for each $g \in G$.
\end{lem}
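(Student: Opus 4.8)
The plan is to realize $G$ as a semidirect product and transport the order statistics of its Sylow $r$-subgroup into the cyclic $r$-part of $\mathbb{Z}_n$. Let $P$ be the normal Sylow $r$-subgroup, so $|P|=r^{a}$ for some $a\geq 1$, and let $V$ be the cyclic $r$-complement, so $V$ is cyclic of order $m=n/r^{a}$ with $\gcd(r,m)=1$. Since $P\trianglelefteq G$, $P\cap V=\{e\}$ (coprime orders), and $|P||V|=n$, we have $G=P\rtimes_\varphi V$ with $\varphi$ the conjugation action, exactly the situation of Lemma~\ref{lem:schurzass}. Identifying $V\cong\mathbb{Z}_m$ and using $\gcd(r^{a},m)=1$, I write $\mathbb{Z}_n\cong\mathbb{Z}_{r^{a}}\times V$, so that the order of $(x,v)$ there is $\ord(x)\ord(v)$ for $x\in\mathbb{Z}_{r^a}$, $v\in V$. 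I would construct $\lambda$ in product form
\[
\lambda(uv)=(\mu(u),v)\qquad(u\in P,\ v\in V),
\]
where $\mu\colon P\to\mathbb{Z}_{r^{a}}$ is a bijection to be built below; since each $g\in G$ has a unique expression $uv$ with $u\in P$, $v\in V$, the map $\lambda$ is automatically a bijection.

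The crux is a single divisibility. Applying the power formula (\ref{eq:semidirect}) with $c=\ord(v)$, the element $w:=(uv)^{\ast\ord(v)}$ satisfies
\[
w=\Big(u\,u^{\varphi v}\,u^{\varphi v^{2}}\cdots u^{\varphi v^{\ord(v)-1}}\Big)\,v^{\ord(v)}=\prod_{j=0}^{\ord(v)-1}u^{\varphi v^{\,j}}\in P,
\]
so $(uv)^{\ast\,\ord(v)\,k}=w^{\,k}$ for all $k$, and taking $k=\ord(w)$ yields
\[
\ord_{\ast}(uv)\ \big|\ \ord(v)\cdot\ord(w).
\]
Because $\ord(w)$ is a power of $r$ while $\ord(v)\mid m$ is prime to $r$, it follows that if $\ord(w)\mid\ord(\mu(u))$ then $\ord_\ast(uv)\mid\ord(v)\ord(\mu(u))=\ord_{\mathbb{Z}_n}(\lambda(uv))$. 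Hence it suffices to build $\mu$ so that $\widetilde{\ord}(u)\mid\ord(\mu(u))$, where $\widetilde{\ord}(u):=\max_{v\in V}\ord\big((uv)^{\ast\ord(v)}\big)$ is a power of $r$ dividing the exponent of $P$. When $P$ is abelian this is immediate, since then the twisted product above lies in the abelian group $P$ and so $\ord(w)\mid\ord(u)$, giving $\widetilde{\ord}(u)=\ord(u)$ (this recovers Lemma~\ref{lem:divideorder1}); the real work is the nonabelian case.

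To construct $\mu$ I would invoke Hall's theorem for the bipartite graph joining $u\in P$ to $z\in\mathbb{Z}_{r^{a}}$ whenever $\widetilde{\ord}(u)\mid\ord(z)$. Because the divisors of $r^{a}$ form a chain, every up-set is a tail, and Hall's condition collapses to the single family
\[
\big|\{u\in P:\ \widetilde{\ord}(u)\mid r^{\,j-1}\}\big|\ \geq\ r^{\,j-1}\qquad(1\leq j\leq a).
\]
I verify these with two ingredients. First, $P$ contains a normal abelian subgroup $C$ on which $\widetilde{\ord}$ agrees with $\ord$: namely $C=P$ when $P$ is abelian, and $C=Z(P)\cong\mathbb{Z}_{r^{a-2}}$ (cyclic and characteristic) when $P$ is nonabelian, using Lemma~\ref{lem:normalindex} (here $r>2$ is essential, forcing $P\cong M_{r^{a}}$). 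For $u\in C$, normality gives $u^{\varphi v^{\,j}}\in C$, and the factors all have order $\ord(u)$, so the product $w$ lies in the abelian $C$ with $\ord(w)\mid\ord(u)$; thus $\widetilde{\ord}(u)=\ord(u)$ on $C$. Since $C$ (either $P$ itself, or $Z(P)$) contains $r^{\,j-1}$ elements of order dividing $r^{\,j-1}$, the inequality holds for $j<a$. Second, for the top value $j=a$ in the nonabelian case I use that the exponent of $M_{r^{a}}$ is $r^{a-1}$, so $\widetilde{\ord}(u)\mid r^{a-1}$ for every $u$ and the whole group satisfies the $j=a$ inequality; in the abelian case $C=P$ already covers $j=a$. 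Hall's theorem then yields $\mu$, and combining with the divisibility above gives $\ord_\ast(g)\mid\ord(\lambda(g))$ for all $g$.

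The main obstacle is precisely controlling $\widetilde{\ord}(u)$, i.e.\ the order of the conjugacy-twisted product $\prod_j u^{\varphi v^{\,j}}$, for $u$ lying outside $C$ in the nonabelian case, where the factors need not commute. My resolution is to avoid estimating it there at all: I confine the tight counting to the characteristic abelian subgroup $C$, where the twisted product stays abelian and its order is controlled, and I absorb the remaining top of the chain using the exponent bound $\exp(M_{r^{a}})=r^{a-1}$ supplied by Lemma~\ref{lem:normalindex}.
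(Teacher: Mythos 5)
Your proof is correct and follows essentially the same route as the paper's: decompose $G=P\rtimes_\varphi V$ via Schur--Zassenhaus, control orders exactly on the characteristic abelian part ($P$ itself when abelian, or $Z(P)$ via Lemma \ref{lem:normalindex}) in the spirit of Lemma \ref{lem:divideorder1}, and absorb the remaining elements using that a non-cyclic $P$ has exponent at most $r^{a-1}$. Your Hall's-theorem packaging (via $\widetilde{\ord}$ and the divisibility $\ord_{\ast}(uv)\mid\ord(v)\cdot\ord\big((uv)^{\ast\ord(v)}\big)$) is a tidier and more careful rendering of the two-piece bijection the paper constructs explicitly, but the underlying decomposition, counting, and structural inputs are the same.
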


\begin{proof}
Let $U$ be a cyclic $r$-complement of $G$.  Let $R$ denote a normal Sylow $r$-subgroup of $G$ which contains a cyclic subgroup $C$ with index $r$.  Say $|R|=r^\alpha$.  By consideration of group orders,  $G=RU$ and $R\cap U=\{e\}$.  Since $R$ is normal in $G$, both  $G = R\rtimes_\varphi U$ and  $G/R\cong U$.

Suppose $R$ is nonabelian. By Lemma \ref{lem:normalindex}, the center
$Z(R)$ of $R$ has order $r^{\alpha-2}$.  Now $Z(R)\subseteq C$
since otherwise by consideration of indices $R=Z(R)C$; this would
imply that $R$ is abelian, contrary to our assumption.  Hence
$Z(R)$ is cyclic; moreover, it is the unique subgroup of $C$ with
index $r$. Now $Z(R)$ is a characteristic subgroup of the normal
(characteristic) subgroup $R$, so $Z(R)$ is normal in $G$.   Since
$Z(R)$ is a normal cyclic subgroup of $G$,  Lemma
\ref{lem:divideorder1} gives that
      $\ord_{\ast}(ab) | \ord_{\cdot}(ab)$
      for all
      $ab\in Z(R)\rtimes_\varphi U$.
Observe that $Z(R)\times U$ is cyclic.   Thus we may take $\lambda$  restricted to $Z(R)\rtimes_\varphi U$ to be a bijection from
   $Z(R)\rtimes_\varphi U$
   to
   $Z(R)\times U \cong\mathbb{Z}_{|Z(R)|}\times \mathbb{Z}_{|U|}
                      \subseteq \mathbb{Z}_{n}$
for which $\ord(x)|\ord(\lambda(x))$ for each $x\in Z(R)\rtimes_\varphi U$.

Now since $G = RU$, we can write $G = Z(R)U \cup (R - Z(R))U$.
Suppose that $x \in (R - Z(R))U$. Then $x = ru$ where $r \in R -
Z(R)$ and $u \in U$. Since $R$ is not cyclic it contains no
element of order $r^\alpha$. Hence the order of $x$ is divisible
by $r^{\alpha - 1}\ord(u)$. We identify $\mathbb{Z}_{|U|}$
with $U$, so
 $\mathbb{Z}_n
   = \mathbb{Z}_{|Z(R)|}U \cup (\mathbb{Z}_{r^{\alpha}}
                                                      - \mathbb{Z}_{|Z(R)|})U$.
Since the elements of $\mathbb{Z}_{r^{\alpha}} - \mathbb{Z}_{|Z(R)|}$ have order $r^{\alpha - 1}$ or $r^{\alpha}$ it follows that
   $r^{\alpha -1}\ord(u) | \ord(z)\ord(u)$ for each
 $z \in \mathbb{Z}_{r^{\alpha}} - \mathbb{Z}_{|Z(R)|}$.
Thus any bijection from
   $G-(Z(R)\rtimes_\varphi U)$ to
   $\mathbb{Z}_{n}-\mathbb{Z}_{|Z(R)|}\mathbb{Z}_{|U|}$
will extend $\lambda$ to all of $G$ with the desired properties.

Now suppose $R$ is abelian. If $R$ is cyclic then by Lemma
\ref{lem:divideorder1} we have nothing to prove. Suppose $R$ is
not cyclic. Then Lemma \ref{lem:divideorder1} gives that
$\ord_{\ast}(ab) | \ord_{\cdot}(ab)$ for all $ab\in
R\rtimes_\varphi U$. Now it is enough to show that there is a
bijection from $R\times U$ to $\mathbb{Z}_n$ with desired
property. By the assumption $R$ has a cyclic subgroup of index
$r$. Thus $R \cong \mathbb{Z}_{r^{\alpha - 1}} \times
\mathbb{Z}_r$, and $R\times U \cong \mathbb{Z}_{r^{\alpha - 1}}
\times \mathbb{Z}_r \times U$. Now if we write $R\times U = (R -
\mathbb{Z}_{r^{\alpha - 1}})U \cup \mathbb{Z}_{r^{\alpha - 1}}U$
then arguing as in the above we reach the same conclusion.
\end{proof}

\begin{lem}
\label{lem:orddivbij:E=E,cyc}
Let $G$ be a finite group, and suppose that  there is a bijection
     $\lambda : G \rightarrow \mathbb{Z}_{n}$
such that
        $\ord(g)|\ord(\lambda(g))$ for each $g \in G$.
Then
$|E(G)| =|E(\mathbb{Z}_n)|$ if and only if $G\cong  \mathbb{Z}_n$.
\end{lem}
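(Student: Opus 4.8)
The plan is to reduce the whole statement to the edge-count formula (\ref{eq:EGsize}) together with the divisibility monotonicity recorded in Lemma \ref{lem:d(r)}. The ``if'' direction is immediate, since isomorphic groups have isomorphic power graphs and hence equally many edges. For the substantive ``only if'' direction, I would set $f(m) = 2m - \phi(m)$ and rewrite (\ref{eq:EGsize}) as $|E(G)| = \frac{1}{2}\left(\sum_{g\in G} f(\ord(g)) - |G|\right)$, with the analogous expression for $|E(\mathbb{Z}_n)|$. Because $\lambda$ is a bijection we have $|G| = n = |\mathbb{Z}_n|$, so comparing $|E(G)|$ with $|E(\mathbb{Z}_n)|$ is exactly comparing $\sum_{g\in G} f(\ord(g))$ with $\sum_{z\in\mathbb{Z}_n} f(\ord(z))$.

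Next I would bring in the hypothesis $\ord(g)\mid\ord(\lambda(g))$. Since Lemma \ref{lem:d(r)} says that $s\mid r$ forces $f(s)\le f(r)$ with equality only when $s=r$, we obtain $f(\ord(g))\le f(\ord(\lambda(g)))$ for every $g\in G$. Summing over $G$ and using that $\lambda$ is a bijection to re-index the right-hand side as a sum over $\mathbb{Z}_n$ gives
\[
\sum_{g\in G} f(\ord(g)) \ \le\ \sum_{g\in G} f(\ord(\lambda(g))) \ =\ \sum_{z\in\mathbb{Z}_n} f(\ord(z)),
\]
whence $|E(G)|\le|E(\mathbb{Z}_n)|$. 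The equality clause of Lemma \ref{lem:d(r)} makes this inequality sharp: it is an equality precisely when $\ord(g)=\ord(\lambda(g))$ for every $g\in G$.

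Finally I would convert termwise equality of orders into the desired isomorphism. Assuming $|E(G)|=|E(\mathbb{Z}_n)|$, the previous step yields $\ord(g)=\ord(\lambda(g))$ for all $g$. A generator of $\mathbb{Z}_n$ has order $n$, and since $\lambda$ is onto there is some $g_0\in G$ with $\ord(\lambda(g_0))=n$; hence $\ord(g_0)=n=|G|$, so $g_0$ generates $G$ and $G\cong\mathbb{Z}_n$. The argument is largely bookkeeping once Lemma \ref{lem:d(r)} is available; the only points demanding care are the re-indexing of the sum through $\lambda$ and the tightness of the equality analysis, but neither is a genuine obstacle.
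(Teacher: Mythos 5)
Your proposal is correct and follows essentially the same route as the paper: both apply the equality clause of Lemma \ref{lem:d(r)} termwise to the sum in (\ref{eq:EGsize}), re-index through the bijection $\lambda$, and then specialize to the preimage of a generator of $\mathbb{Z}_n$ to force an element of order $n$ in $G$. The only cosmetic difference is that the paper deduces $2\ord(g)-\phi(\ord(g))=2\ord(\lambda(g))-\phi(\ord(\lambda(g)))$ for all $g$ and invokes the strict-inequality case of Lemma \ref{lem:d(r)} only at the generator's preimage, whereas you extract $\ord(g)=\ord(\lambda(g))$ for all $g$ first; the content is identical.
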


\begin{proof}
Suppose $|E(G)| =|E(\mathbb{Z}_n)|$.
By Lemma \ref{lem:d(r)}, for all $g\in G$,
    $2\ord(g) -\phi(\ord(g)) - 1
           \leq 2\ord(\lambda(g)) - \phi(\ord(\lambda(g))) -1$.
By  (\ref{eq:EGsize}), and since $\lambda$ is a bijection 2$|E(G)|
= \sum_{g \in G}2\ord(g) - \phi(\ord(g)) - 1
           = \sum_{g \in G}2\ord(\lambda(g)) - \phi(\ord(\lambda(g))) - 1
           = 2|E(\mathbb{Z}_{n})|$.
This equality and the preceding inequality imply that for all
$g\in G$, $2\ord(g) - \phi(\ord(g)) = 2\ord(\lambda(g)) -
\phi(\ord(\lambda(g)))$.

Pick a generator $z$ of $\mathbb{Z}_n$, and let $g=
\lambda^{-1}(z)$.  Then
     $2\ord(g) -\phi(\ord(g))
            = 2\ord(\lambda(g)) - \phi(\ord(\lambda(g)))
            = 2\ord(z) - \phi(\ord(z))
            =2n - \phi(n)$.
Suppose $G$ is not cyclic.  Then  $\ord(g)<n$  and $\ord(g)$ divides $n=\ord(z)$.  Lemma \ref{lem:d(r)} implies that   $2\ord(g) -\phi(\ord(g)) <2n - \phi(n)$.  This contradicts the above.  Thus $G$ must be cyclic, and hence isomorphic to  $\mathbb{Z}_n$. The converse is clear.
\end{proof}

\section{Proof of main theorem}
The proof of Theorem \ref{thm:main} is developed in a series of technical lemmas.

\begin{nta}
\label{nta:groupnwfactors}
Let $G$ be a finite group of order $n$, and adopt the conventions of Notation \ref{nta:nfactored} for the prime factorization of $n$.
\end{nta}

\begin{lem}
\label{lem:cyc,primnotce}
With Notation \ref{nta:groupnwfactors}, the following hold.
\begin{enumerate}
\item No cyclic group is a counterexample to Theorem \ref{thm:main}.

\item No group of prime power order is a counterexample to Theorem \ref{thm:main}.
\end{enumerate}
\end{lem}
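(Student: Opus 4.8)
The plan is to dispatch both parts by exploiting the extremal role of $\mathbb{Z}_n$ together with the completeness result already recorded in Theorem \ref{lem:PGcyclicprimepower1}. Throughout, recall that a ``counterexample to Theorem \ref{thm:main}'' is a group $G$ of order $n$ whose power graph has \emph{strictly} more edges than that of $\mathbb{Z}_n$, i.e.\ $|E(G)| > |E(\mathbb{Z}_n)|$.

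For (i) I would argue purely by definition. A cyclic group of order $n$ is isomorphic to $\mathbb{Z}_n$, so its power graph is isomorphic to $\pg(\mathbb{Z}_n)$ and hence has exactly $|E(\mathbb{Z}_n)|$ edges. Thus it cannot have strictly more edges than $\mathbb{Z}_n$, and so is not a counterexample.

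For (ii) the key observation is that when $n=p^\alpha$ is a prime power, $\mathbb{Z}_n$ is cyclic of prime power order, so by Theorem \ref{lem:PGcyclicprimepower1} its power graph $\pg(\mathbb{Z}_n)$ is complete; hence $|E(\mathbb{Z}_n)| = \binom{n}{2}$, the largest possible number of edges in any simple graph on $n$ vertices. Now let $G$ be any group of order $n=p^\alpha$. Its power graph $\pg(G)$ is a simple graph on the $n$ vertices of $G$, so $|E(G)| \leq \binom{n}{2} = |E(\mathbb{Z}_n)|$. Consequently $G$ cannot exceed $\mathbb{Z}_n$ in its edge count, and so is not a counterexample.

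I expect no real obstacle here: part (i) is definitional, and part (ii) collapses to the elementary fact that a complete graph maximizes the edge count among simple graphs on a fixed vertex set, combined with the already-cited completeness of $\pg(\mathbb{Z}_{p^\alpha})$. The only point demanding care is keeping the meaning of ``counterexample'' precise, namely that it requires \emph{strict} inequality; both arguments in fact establish the non-strict bound $|E(G)| \leq |E(\mathbb{Z}_n)|$, which is more than enough.
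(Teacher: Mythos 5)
Your proposal is correct and follows exactly the paper's route: part (i) is definitional, and part (ii) invokes Theorem \ref{lem:PGcyclicprimepower1} to see that $\pg(\mathbb{Z}_{p^\alpha})$ is complete, hence edge-maximal among all simple graphs on $n$ vertices. The paper states this in one line; you have merely made the same argument explicit.
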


\begin{proof}
Part (i) is clear, and Part (ii) follows from Theorem
\ref{lem:PGcyclicprimepower1}.
\end{proof}

\begin{lem}
\label{lem:cynormsylnotce}
With Notation \ref{nta:groupnwfactors},  suppose $G=P\rtimes_\phi T$ is the semidirect product of a normal cyclic Sylow subgroup $P$ and a subgroup $T$ with order coprime to that of $P$.
Then $G$ is not a minimal counterexample to Theorem \ref{thm:main}.
\end{lem}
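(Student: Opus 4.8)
The plan is to argue by contradiction: assume $G = P\rtimes_\phi T$ is a minimal counterexample and show it must in fact be cyclic, contrary to hypothesis. The engine is a two-step comparison $|E(G)| \leq |E(P\times T)| \leq |E(\mathbb{Z}_n)|$, after which the equality forced by the assumption ``$G$ is a counterexample'' collapses everything. First I would dispose of the degenerate cases: by Lemma~\ref{lem:cyc,primnotce} a minimal counterexample is neither cyclic nor of prime power order, so $n$ has at least two distinct prime divisors, $P$ is a nontrivial proper Sylow subgroup, and $T$ has order $m = |T| = n/|P|$ with $1 < m < n$ and $\gcd(|P|,m)=1$.

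For the first inequality I would invoke Lemma~\ref{lem:Semidirprodcyc}: since $P$ is normal cyclic and coprime to $T$, we have $E(P\rtimes_\phi T)\subseteq E(P\times T)$, hence $|E(G)|\leq |E(P\times T)|$. For the second, I would compare the direct product $P\times T$ against $P\times\mathbb{Z}_m$ using Corollary~\ref{cor:whenprodhasmoreedges} with $U=P$, $V=\mathbb{Z}_m$, $V'=T$. Its two hypotheses are exactly what minimality supplies: $|E(T)|\leq |E(\mathbb{Z}_m)|$ because $m<n$ and $G$ is a minimal counterexample, and $|\overrightarrow{E}(T)|\leq |\overrightarrow{E}(\mathbb{Z}_m)|$ by the directed analogue, Theorem~\ref{cor:maximume}. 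This gives $|E(P\times T)|\leq |E(P\times\mathbb{Z}_m)|$, and since $P$ is cyclic of order coprime to $m$ we have $P\times\mathbb{Z}_m\cong\mathbb{Z}_n$, so the right-hand side equals $|E(\mathbb{Z}_n)|$.

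Chaining these yields $|E(G)|\leq|E(\mathbb{Z}_n)|$; but a counterexample satisfies $|E(G)|\geq|E(\mathbb{Z}_n)|$, so equality holds at every step. I would then read off the equality case of Corollary~\ref{cor:whenprodhasmoreedges}, which forces $|\overrightarrow{E}(T)|=|\overrightarrow{E}(\mathbb{Z}_m)|$, i.e. $\sum_{g\in T}\ord(g)=\sum_{z\in\mathbb{Z}_m}\ord(z)$ via (\ref{eq:dEGsize}); by the equality clause of Lemma~\ref{lem:PGcyclicprimepower2} this means $T\cong\mathbb{Z}_m$ is cyclic. With both $P$ and $T$ cyclic of coprime orders, $P\times T\cong\mathbb{Z}_n$, and Lemma~\ref{lem:divideorder1} provides $\ord_{\ast}(uv)\mid\ord_{\cdot}(uv)$ on the shared underlying set; composing with an isomorphism $P\times T\cong\mathbb{Z}_n$ produces a bijection $\lambda\colon G\to\mathbb{Z}_n$ with $\ord(g)\mid\ord(\lambda(g))$ for every $g$. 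Since $|E(G)|=|E(\mathbb{Z}_n)|$, Lemma~\ref{lem:orddivbij:E=E,cyc} then forces $G\cong\mathbb{Z}_n$, contradicting that $G$ is a counterexample.

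I expect the main point to be conceptual rather than computational: one must push the equality case of the direct-product comparison through the \emph{directed} edge count. It is the equality $|\overrightarrow{E}(T)|=|\overrightarrow{E}(\mathbb{Z}_m)|$, not $|E(T)|=|E(\mathbb{Z}_m)|$, that is strong enough to force $T$ cyclic via the sharp order-sum inequality of Lemma~\ref{lem:PGcyclicprimepower2}; the undirected count alone would not obviously suffice. A minor bookkeeping point is the legitimacy of applying minimality to $T$, which requires confirming $1<m<n$ (guaranteed once the cyclic and prime-power cases are excluded), together with checking that the coprimality, normality, and cyclicity hypotheses of Corollary~\ref{cor:whenprodhasmoreedges}, Lemma~\ref{lem:divideorder1}, and Lemma~\ref{lem:Semidirprodcyc} all hold verbatim in the present setting.
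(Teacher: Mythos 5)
Your proof is correct and follows essentially the same route as the paper: the chain $|E(G)|\le|E(P\times T)|\le|E(P\times\mathbb{Z}_{|T|})|=|E(\mathbb{Z}_n)|$ via Lemma~\ref{lem:Semidirprodcyc} and Corollary~\ref{cor:whenprodhasmoreedges}, followed by Lemmas~\ref{lem:divideorder1} and~\ref{lem:orddivbij:E=E,cyc} to dispose of the equality case. The only (cosmetic) difference is that the paper splits into the cases ``$T$ not cyclic'' and ``$T$ cyclic'' at the outset, whereas you deduce that $T$ is cyclic from the equality clause of Corollary~\ref{cor:whenprodhasmoreedges} combined with Lemma~\ref{lem:PGcyclicprimepower2}; both are valid.
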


\begin{proof}
Suppose $G$ is  a minimal counterexample to Theorem \ref{thm:main}.
By Lemma
\ref{lem:Semidirprodcyc},
    $|E(P\times T)|\geq |E(P\rtimes_\varphi T)|
                             =    |E(G)|
                           \geq |E(\mathbb{Z}_{n})|$.

Note that $P$ is isomorphic to $\mathbb{Z}_{|P|}$. By construction  $\gcd(|P|,|T|)=1$. For the sake of comparison, let $T' = \mathbb{Z}_{|T|}$, and observe that $P\times T'$ is isomorphic to  $\mathbb{Z}_{n}$.  Identify $\mathbb{Z}_{n}$ and $P \times  T'$.

Suppose $T$ is not cyclic.   Since $|T|<|G|$, and since $G$ is
assumed to be a minimal counterexample,  $|E(T)|< |E(T')|$.  Also,
by Theorem \ref{cor:maximume}, $|\overrightarrow{E}(T)| <
|\overrightarrow{E}(T')|$. Now  by Corollary
\ref{cor:whenprodhasmoreedges},
      $|E(P\times T)| < |E(P\times T')|= |E(\mathbb{Z}_{n})|$.
This implies that $|E(\mathbb{Z}_{n})|>|E(\mathbb{Z}_{n})|$, which is absurd.  This contradiction leads to the conclusion that in this case, $G$ is not a minimal counterexample to Theorem \ref{thm:main}.

Suppose $T$ is cyclic.  Then $G$ is the semidirect product of
cyclic subgroups of coprime orders. Note that $G$ is not cyclic by
Lemma \ref{lem:cyc,primnotce}. Now is $P\times T$; in fact, $P\times T$ is isomorphic to $\mathbb{Z}_n$ since $\gcd(|P|,|T|)=1$.   Thus
      $|E(G)| \leq |E(\mathbb{Z}_{n})|= |E(P\times T)|$.
It remains to show that equality does not hold.

Suppose   $|E(G)| = |E(\mathbb{Z}_{n})|$.  Since $T$ is  a cyclic
$p$-complement and $P$ is a cyclic Sylow $p$-subgroup, Lemma
\ref{lem:divideorder1} gives that there is a bijection
$\theta:G\rightarrow\mathbb{Z}_{n}$ such that
      $\ord(g) \ | \ \ord(\theta(g))$ for all $g \in G$.
Now Lemma \ref{lem:orddivbij:E=E,cyc}, leads to the conclusion that $G$ is cyclic.    This contradiction implies that in this case, $G$ is not a minimal counterexample to Theorem \ref{thm:main} either.  The result follows.
\end{proof}

\begin{lem}
\label{lem:largeordntce}
With Notation \ref{nta:groupnwfactors}, if there exists  $g\in G$ with
$\ord(g) > n / p$, then $G$ is not a minimal counterexample to Theorem \ref{thm:main}.
\end{lem}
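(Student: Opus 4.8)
The plan is to deduce from the hypothesis that $G$ has a \emph{normal} cyclic Sylow $p$-subgroup, where $p$ is the greatest prime divisor of $n$, and then to invoke Lemma~\ref{lem:cynormsylnotce}. First I would set $C=\langle g\rangle$, a cyclic subgroup of $G$, and let $m=|G:C|=n/\ord(g)$. The hypothesis $\ord(g)>n/p$ is exactly the statement that $m<p$. In particular $p\nmid m$, so the $p$-part of $|C|=n/m$ equals the $p$-part of $n$, namely $p^{\alpha}$. Hence the Sylow $p$-subgroup $P$ of $C$ is a full Sylow $p$-subgroup of $G$, and it is cyclic since it sits inside the cyclic group $C$.

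Next I would prove that $P\trianglelefteq G$. Because $C$ is cyclic, $P$ is its unique subgroup of order $p^{\alpha}$, and is therefore characteristic in $C$; in particular $C\le N_G(P)$. Consequently the number $n_p$ of Sylow $p$-subgroups of $G$ satisfies $n_p=|G:N_G(P)|\le |G:C|=m<p$. By Sylow's theorem $n_p\equiv 1\pmod p$, and the only positive integer that is both congruent to $1$ modulo $p$ and strictly less than $p$ is $1$. Thus $n_p=1$, so $P$ is normal in $G$.

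With $P$ a normal cyclic Sylow $p$-subgroup and $\gcd(|P|,\,|G|/|P|)=1$, Schur--Zassenhaus (Lemma~\ref{lem:schurzass}) supplies a complement $T$ with $|T|=n/p^{\alpha}$, so that $G=P\rtimes_\varphi T$ with $\gcd(|P|,|T|)=1$. This is precisely the hypothesis of Lemma~\ref{lem:cynormsylnotce}, which gives at once that $G$ is not a minimal counterexample to Theorem~\ref{thm:main}. (In the degenerate case $m=1$ the group $G=C$ is already cyclic, so one could instead appeal to Lemma~\ref{lem:cyc,primnotce}(i); the argument above handles this case uniformly, since it still produces $n_p=1$.)

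I expect the main obstacle to be the normality step: the content is recognizing that a cyclic subgroup of index less than $p$ forces its characteristic Sylow $p$-subgroup to have at most $m<p$ conjugates, which together with the Sylow congruence $n_p\equiv 1\pmod p$ pins $n_p=1$. Once normality is in hand, the remainder is a routine reduction to the already-established structural lemmas.
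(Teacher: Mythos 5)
Your proposal is correct and follows essentially the same route as the paper: extract the cyclic Sylow $p$-subgroup $P$ from $\langle g\rangle$ (possible since $|G:\langle g\rangle|<p$), deduce $P\trianglelefteq G$, apply Schur--Zassenhaus to write $G=P\rtimes_\varphi T$, and conclude via Lemma~\ref{lem:cynormsylnotce}. Your Sylow-counting justification of normality ($n_p\le m<p$ together with $n_p\equiv 1\pmod p$) simply makes explicit the step the paper compresses into ``$P$ is normalized by $\langle g\rangle$; hence $P\lhd G$.''
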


\begin{proof}
Since $\ord(g) > n / p$, we have
       $|G:\langle g\rangle| =|G|/\ord(g) < p$.
Thus $\langle g \rangle$ contains a Sylow $p$-subgroup $P$ of
$G$. In particular, $P$ is cyclic and normalized by $\langle
g\rangle$; hence, $P \ \lhd \ G$. By Lemma
\ref{lem:schurzass}, $G = P\rtimes_\varphi T$ (semidirect
product).  The result follows from Lemma \ref{lem:cynormsylnotce}.
\end{proof}

\begin{lem}
\label{lem:lemma1}
With Notation \ref{nta:groupnwfactors}, if $\phi(n) \leq \frac{n}{q}$, then
$G$ is not a minimal counterexample to Theorem \ref{thm:main}.
\end{lem}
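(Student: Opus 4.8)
The plan is to suppose that $G$ is a minimal counterexample to Theorem~\ref{thm:main} satisfying $\phi(n)\le n/q$, and to reach a contradiction by squeezing $|E(G)|$ strictly below $|E(\mathbb{Z}_n)|$. First I would clear away the prime-power case: by Lemma~\ref{lem:cyc,primnotce}(ii) no group of prime power order is a counterexample, so in particular $n\neq 2^{\alpha}$, which is precisely the hypothesis needed to invoke Lemma~\ref{lem:avez}. Next, since $G$ is assumed to be \emph{minimal}, the contrapositive of Lemma~\ref{lem:largeordntce} forces every $g\in G$ to satisfy $\ord(g)\le n/p$; this uniform bound on element orders is the structural input that makes the edge count manageable.

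For the upper bound I would combine Lemma~\ref{lem:inequalities}(ii) with (\ref{eq:dEGsize}) to write $|E(G)|\le|\overrightarrow{E}(G)|=\sum_{g\in G}(\ord(g)-1)$, and then apply $\ord(g)\le n/p$ to each of the $n-1$ nonidentity elements (the identity contributes $0$). This yields $|E(G)|\le (n-1)\left(\frac{n}{p}-1\right)=\frac{n^{2}}{p}-\frac{n}{p}-n+1$.

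For the lower bound I would invoke Lemma~\ref{lem:avez} in the regime $\phi(n)\le n/q$, which gives $\sum_{z\in\mathbb{Z}_n}\deg(z)\ge 2\frac{n^{2}}{p}-\frac{n}{p}-1$; halving (since $|E(\mathbb{Z}_n)|=\frac12\sum_{z}\deg(z)$) produces $|E(\mathbb{Z}_n)|\ge \frac{n^{2}}{p}-\frac{n}{2p}-\frac12$. Subtracting the upper bound from this lower bound leaves $n+\frac{n}{2p}-\frac32$, which is positive for every $n\ge 2$. Hence $|E(\mathbb{Z}_n)|>|E(G)|$, contradicting the assumption that $G$ is a counterexample, and the lemma follows.

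The genuine content of the argument lives in the lower bound, that is, in Lemma~\ref{lem:avez} and the sharpened estimate (\ref{eq:betterbound}) of Theorem~\ref{thm:phiddineq} that feeds it; granting those, the step carried out here is only the short arithmetic comparison above. The main thing to watch is that both sets of hypotheses are legitimately in force: that $n\neq 2^{\alpha}$ (so Lemma~\ref{lem:avez} applies, guaranteed by ruling out prime powers) and that minimality of $G$ is what licenses the bound $\ord(g)\le n/p$ via Lemma~\ref{lem:largeordntce}.
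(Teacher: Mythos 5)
Your proposal is correct and follows essentially the same route as the paper: both arguments rest on the lower bound of Lemma \ref{lem:avez} in the regime $\phi(n)\le n/q$ (available because Lemma \ref{lem:cyc,primnotce} rules out $n=2^\alpha$) and both reduce the matter to Lemma \ref{lem:largeordntce}. The only difference is organizational---the paper deduces directly from $2|E(G)|\ge 2|E(\mathbb{Z}_n)|$ that some element must have order exceeding $n/p$ and then cites Lemma \ref{lem:largeordntce}, whereas you take that lemma's contrapositive and use the slightly cruder upper bound $|E(G)|\le|\overrightarrow{E}(G)|\le(n-1)\left(\frac{n}{p}-1\right)$, which, as your arithmetic shows, still clears the required margin.
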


\begin{proof}
Suppose $G$ is a minimal counterexample to Theorem \ref{thm:main}.
Then $|E(G)|\geq |E(\mathbb{Z}_n)|$.  Note that $G$ is not cyclic and  $n$ is not a prime power by Lemma \ref{lem:cyc,primnotce}.  In particular, $n$ is not a power of 2.

 By assumption and Lemmas \ref{lem:edgesizes} and \ref{lem:avez},
 \[
\sum_{g\in G} 2\ord(g) - \phi(\ord(g))-1 = \sum_{g\in G} \deg(g) =
2|E(G)| \geq 2|E(\mathbb{Z}_n)| = \sum_{z\in \mathbb{Z}_n}
\deg(z)\geq 2 \frac{n^2}{p} - \frac{n}{p}-1.
\]
Cancel  -1 on the right and $-\phi(\ord(e))=-1$ on the left, and then drop the remaining $- \phi(\ord(g))$ on the left.  Add $n=  \sum_{g\in G} 1$ to both sides to find
\[
\sum_{g\in G} 2\ord(g) >
         2 \frac{n^2}{p} - \frac{n}{p} +n.
\]
Thus there is at least one $g\in G$ for which $\ord(g) > n / p$.
The result follows from Lemma \ref{lem:largeordntce}.
\end{proof}

\begin{lem}
\label{lem:cenotphi>n/q}
With Notation \ref{nta:groupnwfactors}, if $\phi(n) > \frac{n}{q}$, then
$G$ is not a minimal counterexample to Theorem \ref{thm:main}.
\end{lem}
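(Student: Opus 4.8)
The plan is to run a minimal counterexample argument and reduce to the normal--Sylow structure handled by Lemmas \ref{lem:cynormsylnotce} and \ref{lem:divideorder2}. Suppose $G$ is a minimal counterexample with $\phi(n)>n/q$. By Lemma \ref{lem:phi(n)ineq} the order $n$ is odd, so $q\geq 3$; by Lemma \ref{lem:cyc,primnotce} the group $G$ is noncyclic and $n$ is not a prime power (so $k\geq 2$); by Lemma \ref{lem:largeordntce} every element of $G$ has order at most $n/p$; and since a normal cyclic Sylow subgroup would let Lemma \ref{lem:cynormsylnotce} apply, $G$ has no normal cyclic Sylow subgroup. I would aim to contradict one of these standing reductions by exhibiting enough normal structure, so that either Lemma \ref{lem:cynormsylnotce} or the pair Lemma \ref{lem:divideorder2}/Lemma \ref{lem:orddivbij:E=E,cyc} forces $|E(G)|<|E(\mathbb{Z}_n)|$, against $|E(G)|\geq|E(\mathbb{Z}_n)|$.

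The core step is to locate a normal Sylow subgroup $R$ of $G$, for some prime $r$, together with a complement $H$ (which exists by the Schur--Zassenhaus theorem, Lemma \ref{lem:schurzass}). First I would exploit the least prime $q$: if its Sylow subgroup is cyclic, Corollary \ref{cor:qcomplementsmall} produces a normal $q$-complement, peeling off one prime and setting up an induction on the number of prime divisors; and a cyclic subgroup of prime index, when present, feeds Lemma \ref{lem:normalS} to produce a normal Sylow subgroup directly. The bound ``every order at most $n/p$'' is the lever that constrains $R$: were the normal Sylow $R$ for the largest prime $p$ cyclic of full order $p^{\alpha}$ and centralized, even partially, by a nontrivial complement element, one could build an element of order exceeding $n/p$, contradicting the standing reduction. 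Pushing this, I expect $R$ to be forced to be either cyclic or \emph{near-cyclic} (containing a cyclic subgroup of index $r$), the latter being exactly the $p$-groups classified in Lemma \ref{lem:normalindex}.

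With $R$ in hand the endgame splits into cases. If $R$ is cyclic, Lemma \ref{lem:cynormsylnotce} applies for any complement and finishes. If $R$ is near-cyclic and the complement $H$ is cyclic, Lemma \ref{lem:divideorder2} supplies a bijection $\lambda\colon G\to\mathbb{Z}_n$ with $\ord(g)\mid\ord(\lambda(g))$, and Lemma \ref{lem:orddivbij:E=E,cyc} then gives $|E(G)|\leq|E(\mathbb{Z}_n)|$ with equality only in the cyclic case, contradicting minimality. The residual cases---$H$ noncyclic, or $G$ a direct product of noncyclic Sylow subgroups (for instance abelian $G$)---I would settle by the inductive edge comparison: replacing each noncyclic Sylow or complement factor $V$ by the cyclic group $V'=\mathbb{Z}_{|V|}$ of the same order, minimality gives $|E(V)|\leq|E(V')|$ and Theorem \ref{cor:maximume} gives $|\overrightarrow{E}(V)|\leq|\overrightarrow{E}(V')|$, so Corollary \ref{cor:whenprodhasmoreedges}, together with Lemma \ref{lem:Semidirprodcyc} to pass from $\rtimes$ to $\times$ whenever the normal factor is cyclic, yields $|E(G)|<|E(\mathbb{Z}_n)|$.

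The main obstacle is the group theory, not the arithmetic: once $n$ is known to be odd the numeric hypothesis $\phi(n)>n/q$ has done its job, and the difficulty is to prove the structural dichotomy---that a minimal counterexample of odd order with all element orders at most $n/p$ must carry a normal Sylow subgroup that is cyclic or near-cyclic, and (when Lemma \ref{lem:divideorder2} is invoked) a cyclic complement. Marrying Burnside's transfer theorem, Schur--Zassenhaus, Lemma \ref{lem:normalS}, and the classification Lemma \ref{lem:normalindex} with the inductive edge inequalities, while keeping the case analysis exhaustive, is where the real work lies; the oddness of $n$, standing in for the much heavier Feit--Thompson solvability, is precisely what makes the least-prime transfer arguments available.
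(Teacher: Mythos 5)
There is a genuine gap, and it is located exactly where you wave it away: you write that ``once $n$ is known to be odd the numeric hypothesis $\phi(n)>n/q$ has done its job,'' but in the paper's argument oddness is only a side benefit of that hypothesis. Its main job is quantitative. The paper combines the counterexample assumption $|E(G)|\geq|E(\mathbb{Z}_n)|$ with the second inequality of Lemma \ref{lem:avez} (which is precisely where $\phi(n)>n/q$ enters) to get
\[
\sum_{g\in G}\bigl(2\ord(g)-\phi(\ord(g))-1\bigr)\;=\;2|E(G)|\;\geq\;2|E(\mathbb{Z}_n)|\;>\;(n-1)\Bigl(\frac{n}{q}+1\Bigr),
\]
and a pigeonhole step then produces a nonidentity $g\in G$ with $\ord(g)>n/2q$. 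This single element drives everything that follows: if $p>2q$ then $\ord(g)>n/p$ and Lemma \ref{lem:largeordntce} finishes; if $p<2q$ then $\langle g\rangle$ has index less than $2q$ in $G$, and since $n$ is odd every prime divisor of $n$ is at least $q>2$, so that index is a prime $s$ --- this is the cyclic subgroup of prime index that feeds Lemma \ref{lem:normalS} and yields the normal Sylow subgroup $R$. Your proposal has no mechanism to produce $R$: you suggest Corollary \ref{cor:qcomplementsmall}, but that requires the Sylow $q$-subgroup to be cyclic, which nothing guarantees, and you invoke Lemma \ref{lem:normalS} only ``when a cyclic subgroup of prime index is present,'' without any way to show one is present. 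Your own closing admission that ``the real work lies'' in establishing the structural dichotomy is the symptom: the counting step you omitted is that real work.

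Once the element of large order is in hand, your endgame broadly matches the paper's: the case $r\neq s$ gives $R$ cyclic and Lemma \ref{lem:cynormsylnotce} applies; the case $r=s$ gives $R$ with a cyclic subgroup of index $r$ and a cyclic $r$-complement $\langle g^{r^{\beta-1}}\rangle$, so Lemmas \ref{lem:divideorder2} and \ref{lem:orddivbij:E=E,cyc} apply. But note that both the near-cyclicity of $R$ and the cyclicity of the complement are read off directly from $\langle g\rangle$ in the paper --- they are not obtained by the Burnside-transfer or order-bound arguments you sketch, and the standing reduction ``every element has order at most $n/p$'' is never used as a lever against a centralizing complement. You should replace the speculative structural paragraph with the explicit degree-sum estimate and the $p\gtrless 2q$ case split.
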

\begin{proof}
Suppose  $G$ is a counterexample of minimal order. Then
arguing as in Lemma \ref{lem:lemma1} we find
\[
\sum_{g\in G} 2\ord(g) - \phi(\ord(g))-1 \geq (n-1)\left(\frac{n}{q}+1\right).
\]
Each term $\phi(\ord(g))$ is at least one, so we may add $2n$ to
both sides and then drop all remaining contributions of
$-\phi(\ord(g))$.  Also subtract $2$, the contribution of $g=e$,
from each side:
\[
\sum_{g\in G-\{e\}} 2\ord(g)  \geq (n-1)\left(\frac{n}{q}+1\right)+ 2n-2.
\]
Thus among the $n-1$ terms on the left, there is at least one nonidentity $g$ with $\ord(g)\geq n/2q + 1/2 + 1$. In particular, $\ord(g)> n/2q$.

Suppose $p > 2q$. Then  $n / p < n / 2q$, and  thus
    $  \ord(g) > n/2q > n/p$.
In this case the result follows from Lemma \ref{lem:largeordntce}.
Note $p \not= 2q$ since $p$ is prime.

Now suppose  $p < 2q$  (this case occurs when $n$ is the product
of twin primes, for example). Then  $\ord(g) > n / 2q$, so
     $|G : \langle g \rangle| = |G|/\ord(g) < n / (n/2q) = 2q$.
Note that  $n$ is odd by Lemma \ref{lem:phi(n)ineq}, and thus
    $|G : \langle g \rangle|$ is a prime number, say $s$.
Now by Lemma \ref{lem:normalS}, $G$ has a normal $r$-Sylow subgroup
$R$ for some prime $r$.  By Lemma
\ref{lem:schurzass},  $G$ contains an $r$-complement $U$. Thus
$G=R\rtimes_\varphi U$.

 Note  that $\langle g \rangle$ contains a Sylow $t$-subgroup of $G$ for $t\not=s$.   If $r \neq s$, then
 $R\subseteq \langle g \rangle$, so  it is cyclic. The hypotheses of Lemma \ref{lem:cynormsylnotce} are satisfied by $R$ and $U$, so we conclude that $G$ is not a minimal counterexample in this case.

 Suppose that $r = s$.  Say $|R|=r^\beta$. Since $|G:\langle g\rangle|=r$, the subgroup $H=\langle g^{r^{\beta-1}}\rangle$ has order $|H|=|G|/r^\beta$.  Thus $H$ is a cyclic $r$-complement.  Note that $q>2$ by Lemma
 \ref{lem:phi(n)ineq}.

Now Lemma \ref{lem:divideorder2} gives a bijection, say again
$\lambda$, from $G$ to $\mathbb{Z}_{n}$ such that $\ord(g) \ | \ \ord
(\lambda(g))$ for all $g\in G$. In this case Lemma
\ref{lem:orddivbij:E=E,cyc} leads to the conclusion that $G$ is
cyclic.    This contradiction implies that $G$ is not a minimal
counterexample to Theorem \ref{thm:main}.
\end{proof}

We are now ready to prove Theorem \ref{thm:main}.

\begin{proof}
Since Lemmas \ref{lem:lemma1} and \ref{lem:cenotphi>n/q} exhaust the possibilities, there is no  counterexample of minimal order to Theorem \ref{thm:main}, and hence no counterexamples at all.  Thus Theorem \ref{thm:main} holds for all groups of all orders.
\end{proof}

We may restate Theorem \ref{thm:main} without reference to power graphs as follows.

\begin{cor}
Let $G$ be a finite group of order $n$. Then
$\displaystyle{
\sum_{d | n} (2d - \phi(d))\phi(d) \geq \sum_{g\in G} (2\ord(g) -
\phi(\ord(g)))}$,
with equality if and only if $G$ is cyclic.
\end{cor}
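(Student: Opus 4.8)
The plan is to recognize that this Corollary is merely a bookkeeping reformulation of Theorem \ref{thm:main}, so essentially all the real content has already been established; what remains is to translate the edge-count inequality into the stated order-sum inequality via the formula in Lemma \ref{lem:edgesizes}. The key observation is that the left-hand side is exactly the order-sum for the cyclic group. Indeed, in $\mathbb{Z}_n$ there are precisely $\phi(d)$ elements of order $d$ for each divisor $d \mid n$, so for any function $f$ one has $\sum_{z \in \mathbb{Z}_n} f(\ord(z)) = \sum_{d \mid n} \phi(d)\, f(d)$. Applying this with $f(d) = 2d - \phi(d)$ gives
\[
\sum_{d \mid n} (2d - \phi(d))\phi(d) = \sum_{z \in \mathbb{Z}_n}\bigl(2\ord(z) - \phi(\ord(z))\bigr).
\]
Thus the claimed inequality is equivalent to $\sum_{z \in \mathbb{Z}_n}\bigl(2\ord(z) - \phi(\ord(z))\bigr) \geq \sum_{g \in G}\bigl(2\ord(g) - \phi(\ord(g))\bigr)$.

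Next I would invoke equation (\ref{eq:EGsize}) of Lemma \ref{lem:edgesizes}, which gives $2|E(G)| = \sum_{g \in G}\bigl(2\ord(g) - \phi(\ord(g)) - 1\bigr) = \sum_{g \in G}\bigl(2\ord(g) - \phi(\ord(g))\bigr) - n$, and likewise $2|E(\mathbb{Z}_n)| = \sum_{z \in \mathbb{Z}_n}\bigl(2\ord(z) - \phi(\ord(z))\bigr) - n$. Since both order-sums are obtained from the corresponding edge counts by adding the same constant $n$ and doubling, the inequality $\sum_{z}\bigl(2\ord(z) - \phi(\ord(z))\bigr) \geq \sum_{g}\bigl(2\ord(g) - \phi(\ord(g))\bigr)$ is equivalent to $|E(\mathbb{Z}_n)| \geq |E(G)|$. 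This last inequality is precisely Theorem \ref{thm:main}, which therefore yields the desired bound directly.

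For the equality clause, I would argue that the chain of equivalences in the previous paragraph preserves equality at every step: equality in the order-sum inequality holds if and only if $|E(G)| = |E(\mathbb{Z}_n)|$. By Corollary \ref{cor:isomisom} (the equivalence of (ii) and (iii)), this occurs exactly when $G \cong \mathbb{Z}_n$, i.e. when $G$ is cyclic. I do not expect any genuine obstacle here, since the entire analytic and group-theoretic difficulty has been absorbed into Theorem \ref{thm:main} and Corollary \ref{cor:isomisom}; the only point requiring care is the elementary verification that $\mathbb{Z}_n$ has exactly $\phi(d)$ elements of each order $d \mid n$ (which underlies the first displayed identity), together with keeping the additive constant $n$ and the factor of $2$ straight when passing between edge counts and order sums.
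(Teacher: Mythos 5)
Your proposal is correct and matches the paper's (implicit) derivation: the paper states this corollary as an immediate restatement of Theorem \ref{thm:main}, obtained exactly as you do by combining equation (\ref{eq:EGsize}) with the fact that $\mathbb{Z}_n$ has $\phi(d)$ elements of order $d$ for each $d \mid n$, and settling the equality case via Corollary \ref{cor:isomisom}. No gaps; the bookkeeping with the additive constant $n$ and the factor of $2$ is handled correctly.
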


\noindent{\bf Acknowledgment.} The second author thanks his advisors Professor Hassan Yousefi Azari and Professor Ali Reza Ashrafi for providing him with this question as a conjecture. We thank Professor I. Martin Isaacs, from the University of Wisconsin-Madison for his valuable comments. We thank Professor Andrei Kelarev from the University
of Newcastle (Australia) for  discussion on the background and uses of 
power graphs of groups.


\begin{thebibliography}{33}


\bibitem{AbKeCh}
J.~Abawajy, A.~Kelarev and M.~Chowdhury, Power Graphs: A Survey,
Electronic Journal of Graph Theory and Applications {\bf 1}(2) (2013), 125--147.


\bibitem{a} H.~Amiri, S.M.~Jafarian Amiri and I.M.~Isaacs, Sums of element orders in finite groups, Communications in Algebra {\bf 37} (2009) 2978--2980.

\bibitem{a'} T.M.~Apostol, Introduction to Analytic Number Theory, Springer-Verlog,
1976.

\bibitem{b1} Y.~Berkovich, Groups of Prime Power Order, Volume 1, Walter de Gruyter GmbH Co,
Berlin, 2008.

\bibitem{burton:ENT}
D.M.~Burton,
Elementary Number Theory, fifth ed., McGraw Hill, Boston, 2002.

\bibitem{c1} P.J.~Cameron and  S.~Ghosh, The power
graph of a finite group, Discrete Mathematics {\bf 311} (2011)
1220--1222.

\bibitem{c2}  P.J.~Cameron,  The power graph of a finite group,
II, Journal of Group Theory {\bf 13} (2010) 779--783.

\bibitem{ch}  I.~Chakrabarty, S.~Ghosh and  M.K.~Sen, Undirected
power graphs of semigroups, Semigroup Forum {\bf 78} (2009) 410--426.

%\bibitem{h} F.~Harary, Graph Theory, Addison-Wesly Publishing
%Company, Inc., Philippines Copyright, 1969.

%\bibitem{i} I. M. Isaacs, Personal Communications.

 \bibitem{KQ1}
 A.V.~Kelarev and S.J.~Quinn, A combinatorial property and power graphs of groups, Contributions to general
algebra, {\bf 12} (Vienna, 1999), 229--235, Heyn, Klagenfurt (2000). 

\bibitem{KQ2} A.V.~Kelarev and S.J.~Quinn, A combinatorial property and power graphs of semigroups, Commentationes Mathematicae Universitatis Carolinae, {\bf 45} (2004) 1--7.


\bibitem{k} A.V.~Kelarev and S.J.~Quinn, Directed graph and combinatorial
properties of semigroups. J. Algebra {\bf  251} (2002) 16--26


\bibitem{KQS}
 A. V.~Kelarev,  S. J.~Quinn and R.~Smolikova, Power graphs and semigroups of matrices, Bulletin of The Australian
Mathematical Society, {\bf 63} (2001) 341--344.


\bibitem{m} M.~Mirzargar,  A.R.~Ashrafi and M.J.~Nadjafi-Arani, On the power
graph of a finite group, Filomat, {\bf 26}(6) (2012), 1196--1203.

\bibitem{r} J.S.~Rose, A Course on Group Theory, Dover Publications, Inc., New York, 1994.

\end{thebibliography}
\end{document}